\renewcommand{\phi}{\varphi}
\title{Some model theory of profinite groups
}
\author{Tim Clausen\quad and\quad Katrin Tent
}
\date{\today}
\newtheorem{satz}{Theorem}[section]
\newtheorem{theorem}[satz]{Theorem}
\newtheorem{lemma}[satz]{Lemma}
\newtheorem{proposition}[satz]{Proposition}
\newtheorem{corollary}[satz]{Corollary}
\newtheorem{definition}[satz]{Definition}
\newcommand{\nc}{\newcommand}
\nc{\sa}{semialgebraic\xspace}
\nc{\el}{elementary\xspace}
\nc{\low}{lower \el}
\nc{\inv}[1]{\frac{1}{#1}}
\nc{\G}{\Gamma}
\nc{\Np}{\N_{\scriptscriptstyle >0}}
\nc{\Z}{\mathbb{Z}}
\nc{\Q}{\mathbb{Q}}
\nc{\N}{\mathbb{N}}
\nc{\Rp}{\R_{\scriptscriptstyle >0}}
\nc{\C}{\mathbb{C}}
\nc{\F}{\mathbb{F}}
\nc{\K}{\mathcal{K}}
\nc{\Kmu}{\mathcal{K_\mu}}
\nc{\Mmu}{M_\mu}
\nc{\Tmu}{T_\mu}
\nc{\grad}{\chi^}
\nc{\U}{\mathbb{U}}
\nc{\E}{\mathbb{E}}
\nc{\Epsilon}{{\Large $\epsilon$}} 
\nc{\ap}{approximable\xspace}
\nc{\e}{\mathrm{e}}
\nc{\pe}{\lhd_{p.e}}
\nc{\ii}{\,\mathrm{i}}
\nc{\Es}{\E\setminus\R_{\scriptscriptstyle\leq0}}
\DeclareMathOperator{\Aut}{Aut}
\DeclareMathOperator{\Core}{Core}
\DeclareMathOperator{\rk}{rk}
\DeclareMathOperator{\gpdim}{d}
\DeclareMathOperator{\lcm}{lcm}
\DeclareMathOperator{\VC}{VC}
\DeclareMathOperator{\Th}{Th}
\begin{document}

\maketitle
\begin{abstract}
\end{abstract}

\section*{Introduction} 
\addcontentsline{toc}{section}{Introduction}
The main purpose of these notes is to give more background and details for the results obtained in \cite{tent} which rely heavily on deep results by Lazar, Lubotzky, Mann, du Sautoy and others. 
\cite{tent} studies profinite groups as two-sorted model theoretic structures in the language $\mathcal{L}_\text{prof}$. A profinite group as  $\mathcal{L}_\text{prof}$-structure
$\mathcal{G} = (G,I)$ consists of a group $G$, a partial order $I$, and a binary relation $K \subseteq G \times I$ which encodes a neighborhood basis of the identity consisting of open subgroups. If $K$ encodes the family of all open subgroups, we call $\mathcal{G} = (G,I)$ a full profinite group. The main result in \cite{tent} shows that the theory of a full profinite group $G$ is NIP  if and only if it is NTP$_2$ if and only if $G$ has an open subgroup which is a finite direct product of compact $p$-adic analytic groups.

Their theorem yields another characterization for compact $p$-adic analytic pro-$p$ groups:
A pro-$p$ group $G$ is compact $p$-adic analytic if and only if the full profinite group $\mathcal{G} = (G,I)$ has NIP (or NTP\textsubscript{2}).
A direct proof for  will be given in Section 7.
In Section 8 we will study elementary extensions of groups as $\mathcal{L}_\text{prof}$-structures, extending the final remarks of \cite{tent}.

\section{Profinite groups and pro-$\mathcal{C}$ groups}
Profinite groups are inverse limits of inverse systems of finite groups. Viewing finite groups as topological groups with the discrete topology, profinite groups
carry the inverse limit topology. Thus, profinite groups are compact Hausdorff and totally disconnected. Conversely, every such group is profinite.

Suppose $\mathcal{C}$ is a formation of finite groups, i.e. a nonempty class of finite groups closed under isomorphism, taking quotients and subquotients (so if $G$ is a finite group, $N_1,N_2 \trianglelefteq G$, with $G/N_1, G/N_2 \in \mathcal{C}$ then $ G/(N_1 \cap N_2) \in \mathcal{C}$.
Then a pro-$\mathcal{C}$ group is the inverse limit of a surjective inverse system of groups in $\mathcal{C}$. Important examples of 
such classes $\mathcal{C}$  of finite groups are the class of all finite groups, the class of finite solvable groups, the class of finite nilpotent groups, the class of finite $p$-groups, where $p$ is a prime,  and the class of finite cyclic groups.

\medskip

Closed subsets of a profinite groups can be described in terms of open normal subgroups. In particular, the closed subgroups of a profinite groups can be approximated by open subgroups in the following sense:

\begin{proposition}[Proposition 1.2 (ii) of \cite{dixon} and Proposition 2.1.4 (d) of \cite{ribes}] \label{prop:prof_closed_subset}\label{prop:limit_closure}
	Suppose $G$ is a profinite group.
	\begin{itemize}
		\item[(a)] For $X \subseteq G$  the closure $\overline{X}$ of $X$ is given by $\overline{X}=\bigcap_{N \trianglelefteq_o G} XN$.
		\item[(b)] If $H \leq_c G$ is a closed (normal) subgroup, then $H$ is the intersection of all open (normal) subgroups containing $H$.
	\end{itemize}
\end{proposition}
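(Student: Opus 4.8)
The plan is to derive both parts from the single foundational fact that in a profinite group the open normal subgroups form a neighbourhood basis of the identity; this is immediate from the inverse-limit description, since if $G = \varprojlim G_i$ with the $G_i$ finite (discrete), the kernels of the projections $G \to G_i$ are open normal subgroups and are cofinal among the basic neighbourhoods of $1$ inherited from the product topology on $\prod_i G_i$.

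For (a), I would first note that for any $N \trianglelefteq_o G$ the set $XN = \bigcup_{x \in X} xN$ is a union of cosets of $N$, hence open; its complement is also a union of cosets of $N$, hence open as well, so $XN$ is clopen. Since $X \subseteq XN$ and $XN$ is closed, this gives $\overline{X} \subseteq \bigcap_{N \trianglelefteq_o G} XN$. For the reverse inclusion, take $g \in \bigcap_{N \trianglelefteq_o G} XN$ and an arbitrary open neighbourhood $U$ of $g$; choose $N \trianglelefteq_o G$ with $gN \subseteq U$, and from $g \in XN$ write $g = xn$ with $x \in X$ and $n \in N$, so that $x = gn^{-1} \in gN \subseteq U$. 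Thus $U$ meets $X$, and as $U$ was arbitrary, $g \in \overline{X}$.

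For (b), apply (a) to the closed subgroup $H$: since $H$ is closed, $H = \overline{H} = \bigcap_{N \trianglelefteq_o G} HN$. Because $N$ is normal, each $HN$ is a subgroup; it is open by the coset argument above and it contains $H$; and if $H \trianglelefteq G$ then each $HN$ is normal as well. Writing $\mathcal{U}$ for the family of all open (normal) subgroups of $G$ containing $H$, we then have $H \subseteq \bigcap \mathcal{U} \subseteq \bigcap_{N \trianglelefteq_o G} HN = H$, the middle inclusion because every $HN$ belongs to $\mathcal{U}$. Hence equality holds throughout, which is the assertion.

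I do not expect a genuine obstacle: the only point that truly needs the structure of the profinite topology is the neighbourhood-basis fact, and the only mild subtlety in the argument is observing that $XN$ is not merely open but clopen, which is precisely what allows us to pass to the closure rather than settling for an open superset of $X$.
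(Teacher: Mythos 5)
Your proof is correct. The paper does not supply a proof of this proposition---it simply cites Proposition~1.2(ii) of \cite{dixon} and Proposition~2.1.4(d) of \cite{ribes}---and your argument (showing each $XN$ is clopen for the forward inclusion, and using that translates of open normal subgroups form a neighbourhood basis for the reverse, then specializing $X=H$ for part~(b)) is precisely the standard argument found in those references.
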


\subsection{Generating sets and rank}
A topological group $G$ is topologically generated by a set $X \subseteq G$ if the subgroup generated by $X$ is dense in $G$, that is $\overline{ \langle X \rangle } = G$. We denote the minimal size of a topologically generating set by $\gpdim(G)$ and call $G$ (topologically) finitely generated if this is number is finite.

In many cases, statements about profinite groups can be obtained by pulling back the information from the finite quotients to the inverse limit. We illustrate this kind of argument by showing how  topological generating sets for profinite groups are connected to generating sets for their quotients by open normal subgroups:
\begin{proposition}[Proposition 1.5 of \cite{dixon}] \label{prop:gen_set}
	Suppose $G$ is a profinite group and $H \leq_c G$ is a closed subgroup.
	\begin{itemize}
		\item[(a)] A subset $X \subseteq H$ generates $H$ topologically if and only if $XN/N$ generates $HN/N$ for all $N \trianglelefteq_o G$.
		\item[(b)] Suppose $d$ is a positive integer and $\gpdim(HN/N) \leq d$ for all $N \trianglelefteq_o G$. Then $\gpdim(H) \leq d$.
	\end{itemize}
\end{proposition}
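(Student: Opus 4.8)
The plan is to obtain (a) directly from the description of closures in Proposition~\ref{prop:limit_closure}(a), and then to deduce (b) from (a) by a compactness argument carried out inside the profinite group $H^d$; this is exactly the ``pull information back from the finite quotients'' pattern mentioned before the statement.

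For (a), I would start from the identity $\overline{\langle Y\rangle}=\bigcap_{N\trianglelefteq_o G}\langle Y\rangle N$, valid for any subset $Y\subseteq G$ by Proposition~\ref{prop:limit_closure}(a) (here $\langle Y\rangle$ need not be closed, but $\langle Y\rangle N$ is an open subgroup because $N$ is normal). Since $X\subseteq H$ we have $\langle X\rangle\le H$ and hence $\langle X\rangle N\subseteq HN$ for every $N\trianglelefteq_o G$. If $X$ generates $H$ topologically, then for each such $N$ we get $H=\overline{\langle X\rangle}\subseteq\langle X\rangle N$, so $HN\subseteq\langle X\rangle N\subseteq HN$ as $\langle X\rangle N$ is a subgroup containing both $H$ and $N$; quotienting by $N$ this says precisely that $XN/N$ generates $HN/N$. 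Conversely, if $XN/N$ generates $HN/N$, i.e.\ $\langle X\rangle N=HN$, for all $N\trianglelefteq_o G$, then $\overline{\langle X\rangle}=\bigcap_N\langle X\rangle N=\bigcap_N HN=\overline H=H$, using that $H$ is closed. This proves (a).

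For (b), consider the compact group $H^d$ and, for each $N\trianglelefteq_o G$, the set $S_N\subseteq H^d$ of tuples $(h_1,\dots,h_d)$ whose images generate $HN/N$. Because the coordinatewise reduction map $H^d\to(HN/N)^d$ is continuous with finite discrete target, $S_N$ is clopen; it is nonempty since $\gpdim(HN/N)\le d$ lets us pick a generating $d$-tuple (padding a smaller generating set with the identity). I would then check that $\{S_N\}_{N\trianglelefteq_o G}$ has the finite intersection property: given $N_1,\dots,N_k\trianglelefteq_o G$ put $N=N_1\cap\cdots\cap N_k\trianglelefteq_o G$; the canonical surjections $HN/N\twoheadrightarrow HN_i/N_i$ map generating tuples to generating tuples, so $\emptyset\neq S_N\subseteq\bigcap_i S_{N_i}$. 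By compactness of $H^d$ the intersection $\bigcap_{N\trianglelefteq_o G}S_N$ is nonempty, and any $(h_1,\dots,h_d)$ in it has the property that $\{h_1,\dots,h_d\}N/N$ generates $HN/N$ for every $N\trianglelefteq_o G$; by part (a), $\{h_1,\dots,h_d\}$ topologically generates $H$, so $\gpdim(H)\le d$.

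I do not expect a genuine obstacle here. The two points needing care are purely bookkeeping: identifying the image of $\langle X\rangle$ in $G/N$ with $\langle XN/N\rangle$ (elementary group theory), and verifying that refining to the common intersection $N=\bigcap N_i$ produces the finite intersection property (which uses only that a smaller open normal subgroup gives a finer quotient). Both steps are routine, so the real content is simply assembling Proposition~\ref{prop:limit_closure}(a) with the compactness of $H^d$.
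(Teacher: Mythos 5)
Your proposal is correct and follows the same strategy as the paper: part (a) is derived from Proposition~\ref{prop:limit_closure}, and part (b) is a compactness argument (the paper phrases it as the inverse limit of the nonempty finite sets $\mathcal{Y}_N$ of generating $d$-tuples of $HN/N$, while you phrase it as the finite intersection property of closed sets $S_N\subseteq H^d$; these are the same argument in different clothing). Working inside $H^d$ also cleanly sidesteps the minor point that a tuple lying in all the $HN$ must in fact lie in $H$, which the paper leaves implicit.
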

\begin{proof}
	(a) If $X$ generates $H$ topologically, then $XN/N$ generates $HN/N$ for all $N \trianglelefteq_o G$.	
	Now assume $XN/N$ generates $HN/N$ for all $N \trianglelefteq_o G$. 
	Then $\langle X \rangle N = G$ for all $N \trianglelefteq_o G$ and 
	thus $\overline{\langle X \rangle} = H$ by 	Proposition~\ref{prop:limit_closure}.
	
	(b) For an open normal subgroup $N \trianglelefteq_o G$, we denote by $\mathcal{Y}_N$ the set of all $d$-tuples which generate $HN/N$. By assumption $\mathcal{Y}_N$ is nonempty for all $N \trianglelefteq_o G$.
	If $N \leq M$ are two open normal subgroups of $G$, then the canonical projection $\pi_{NM}:G/N \rightarrow G/M$ maps a generating
	$d$-tuple for $HN/N$ to a generating $d$-tuple for $HM/M$. Hence we obtain an inverse system $\{ \mathcal{Y}_N, \pi_{NM} \}$
	of nonempty finite sets. The inverse limit $\varprojlim \mathcal{Y}_N$ is nonempty by compactness, so
	if $(a_1, \dots, a_d) \in \varprojlim \mathcal{Y}_N$, then $a_1N, \dots, a_dN$ generate $HN/N$ for all $N \trianglelefteq_o G$. Hence $a_1, \dots, a_d$ generate $H$ topologically by (a).
\end{proof}

Since profinite groups are compact, open subgroups have finite index. 
By considering continuous homomorphisms to the finite symmetric groups $S_m$,
we see as in the case of (abstract) finitely generated groups  that they have few open subgroups.
\begin{proposition}[Proposition 1.6 of \cite{dixon}] \label{prop:fin_gen_fin_subgps}
	Suppose $G$ is a finitely generated profinite group. Then $G$ has only finitely many open subgroups of any given finite index
and every open subgroup of $G$ is finitely generated.
\end{proposition}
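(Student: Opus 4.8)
The plan is to imitate the classical argument for abstract finitely generated groups, exploiting that open subgroups of index $n$ are precisely the point stabilizers of continuous transitive actions of $G$ on $n$-element sets. Fix once and for all a topological generating set $x_1, \dots, x_d$ of $G$, where $d = \gpdim(G)$.

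First I would bound the number of open subgroups of a given index $n$. Let $H \leq_o G$ with $[G:H] = n$. Left multiplication gives a transitive action of $G$ on the coset space $G/H$, and this action is continuous: its kernel is $\Core_G(H) = \bigcap_{g \in G} gHg^{-1}$, a finite intersection (there are at most $n$ distinct conjugates of $H$, each open since conjugation is a homeomorphism), hence an open subgroup. So we get a continuous homomorphism $\phi_H : G \to \Sym(G/H)$ for which $H = \phi_H^{-1}(\mathrm{Stab}(H))$. Now any continuous homomorphism $\phi$ from $G$ into a finite group is determined by $(\phi(x_1),\dots,\phi(x_d))$: since $\overline{\langle x_1,\dots,x_d\rangle} = G$ and the target is discrete, $\phi(G) = \phi(\overline{\langle x_i\rangle}) \subseteq \overline{\phi(\langle x_i\rangle)} = \phi(\langle x_i\rangle)$. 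Hence there are at most $(n!)^d$ continuous homomorphisms $G \to \Sym(G/H) \cong S_n$, and for each one at most $n$ choices of point stabilizer, so at most $n\cdot(n!)^d$ open subgroups of index $n$ altogether. This is finite, which is the first claim.

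For the second claim, let $H$ be an open subgroup, say of index $n$, and apply Proposition~\ref{prop:gen_set}(b). For every $N \trianglelefteq_o G$ the images of $x_1, \dots, x_d$ generate the finite group $G/N$, and $HN/N \leq G/N$ has index $[G:HN]$ dividing $n$, hence at most $n$. By the Schreier index formula for abstract groups (a subgroup of index $m$ in a group generated by $d$ elements is generated by $1 + m(d-1)$ elements), we get $\gpdim(HN/N) \leq 1 + n(d-1)$ for every $N \trianglelefteq_o G$. Proposition~\ref{prop:gen_set}(b) then yields $\gpdim(H) \leq 1 + n(d-1) < \infty$.

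The argument is essentially routine; the only points needing care are the continuity bookkeeping — that an open finite-index subgroup has open core, so its coset action is continuous, and that a continuous homomorphism into a finite group is pinned down by the images of a topological generating set — and invoking the finite Schreier bound precisely to obtain an estimate on $\gpdim(HN/N)$ that is \emph{uniform} in $N$, which is exactly the hypothesis of Proposition~\ref{prop:gen_set}(b). I expect no genuine obstacle beyond assembling these pieces.
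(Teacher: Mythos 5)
Your proof is correct, and it follows exactly the approach the paper sketches immediately before the statement (``by considering continuous homomorphisms to the finite symmetric groups $S_m$''), which is also the proof in Dixon et al.\ that the paper cites without reproducing. The only thing worth noting is that for the second claim the paper gives no hint at all, and your route --- bounding $\gpdim(HN/N)$ uniformly in $N$ via the Schreier index formula and then invoking Proposition~\ref{prop:gen_set}(b) --- is precisely the intended use of that proposition and is sound.
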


Note that in general the converse of Proposition~\ref{prop:gen_set} (b) need not hold
(but it does hold in powerful pro-$p$ groups, see Theorem~\ref{t:finite_rank}). Therefore we define:

\begin{definition}[Definition 3.12 of \cite{dixon}]
	The \emph{rank} of  a profinite group $G$ is defined as
	\[ \rk (G) = \sup \{ \gpdim(H) : H \leq_o G \} \in \N \cup \{ \infty \}. \]
\end{definition}

\subsection{Order of profinite groups and Hall subgroups}
By Proposition~\ref{prop:prof_closed_subset} every closed subgroup $H$ of a profinite group $G$ is the intersection of the open subgroups of $G$ which contain $H$. This can be used to give a definition for the index of closed subgroup $H$ in $G$ that generalizes the notion of index for finite index subgroups, is well behaved with respect to the above intersection property, and allows us to perform divisibility arguments for the index of $H$ in $G$ even when it is not finite.
We follow Section 2.3 of \cite{ribes}.

\begin{definition}
	A supernatural number is a formal product $n = \prod_p p^{n(p)}$, where $p$ runs through the set of all primes and each $n(p)$ is an element of $\mathbb{N} \cup \{ \infty \}$.
\end{definition}

These numbers generalize certain aspects of natural numbers. Obviously every natural number can be viewed as a supernatural number. We can multiply two supernatural numbers by
\[ n \cdot m = \prod_p p^{n(p)+m(p)}. \]
We say $n$ divides $m$, $n|m$, if $n(p) \leq m(p)$ holds for all primes $p$.
For a set of primes $\pi$,
we call a supernatural number $\prod_p p^{n(p)}$  a $\pi$-number if $n(p) = 0$ for all $p \in \pi'$ where $\pi'$
denotes the set of all primes which are not contained in $\pi$.

Given a family $\{ n_i : i \in I \}$ of supernatural numbers, we define the least common multiple as
\[	\lcm \{ n_i: i\in I \} = \prod_p p^{n(p)}, \text{ where } n(p) = \sup \{ n_i(p) : p \text{ prime}\}.\]

\begin{definition}
	Given a profinite group $G$ and a closed subgroup $H \leq_c G$. We define the \emph{index} of $H$ in $G$ to be the supernatural number
	\[ |G:H| = \lcm \{ |G/U:HU/U| : U \trianglelefteq_o G \}. \]
	The order of $G$ is the index of the trivial subgroup in $G$, $|G| = |G:1|$.
\end{definition}

With these definitions it is not hard to see that this notion of index for closed subgroups behaves as it should. In particular we have
\begin{proposition}[Proposition 4.2.3 of \cite{ribes}] \label{prop:finite_index_divides}
	Let $G$ be a profinite group and let $H \leq G$ be a subgroup of finite index. Then $|G:H|$ divides $|G|$.
\end{proposition}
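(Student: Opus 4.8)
The plan is to work straight from the definitions and to reduce the whole statement to one elementary observation: for every $U \trianglelefteq_o G$ the product $HU$ is an \emph{open} subgroup of $G$, being a subgroup that contains the open subgroup $U$. First I would record that a closed subgroup of finite index is automatically open (its complement is a finite union of closed translates $g_i H$), so there is no harm in taking $H \leq_o G$; then, choosing $U_0 \trianglelefteq_o G$ with $U_0 \leq H$ and noting $HU = H$ for all $U \leq U_0$, one sees that the supernatural index $|G:H|$ of the definition coincides with the ordinary finite index $[G:H]$. For the divisibility claim this reduction is not actually needed, and I would instead keep $|G:H| = \lcm\{\, |G/U : HU/U| : U \trianglelefteq_o G \,\}$ and estimate each factor separately.

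The core computation is then a short chain of divisibilities. Fix $U \trianglelefteq_o G$. Since $U \leq HU$, the factor $|G/U : HU/U|$ is the honest finite index $[G:HU]$, and the tower $U \leq HU \leq G$ with $U$ normal in $G$ gives $[G:U] = [G:HU]\cdot[HU:U]$, so $[G:HU]$ divides $[G:U] = |G/U|$. By the definition of the order, $|G| = |G:1| = \lcm\{\, |G/U| : U \trianglelefteq_o G \,\}$, so each $|G/U|$ divides $|G|$, and hence so does each $[G:HU]$. It remains only to invoke the routine fact that a least common multiple of supernatural numbers all dividing $|G|$ again divides $|G|$, its $p$-adic exponent being the supremum of the $p$-adic exponents of the members. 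Combining these gives $|G:H| = \lcm_U [G:HU]$ divides $|G|$.

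I do not expect a genuine obstacle here: the argument is essentially an unwinding of the definitions of $|G:H|$ and $|G|$, the only real input being that $HU$ is open. The single point to keep straight is that the supernatural index $|G:H|$ defined by the least common multiple agrees with the abstract group index $[G:H]$ exactly when $H$ is open --- in fact the $\lcm$-index depends only on the closure $\overline H$, since $HU = \overline H U$ for all $U \trianglelefteq_o G$ --- and with that definition the argument above actually shows $|G:H| \mid |G|$ for every closed subgroup $H$, the finite-index hypothesis being used only to make the index a natural number. If one instead reads $|G:H|$ as the abstract index of a possibly non-closed finite-index subgroup, the statement becomes genuinely more delicate: one would first pass to $\overline H$ and then treat the remaining dense case on its own. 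But that is not the reading forced by the definitions given above.
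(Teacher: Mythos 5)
Your argument correctly proves the proposition when $H$ is closed (equivalently open, given the finite index): each $[G:HU]$ divides $[G:U]$, which divides $|G|$, and a supernatural $\lcm$ of divisors of $|G|$ again divides $|G|$. But you have chosen the wrong reading of the statement. The proposition is intended for \emph{abstract} subgroups of finite index, which need not be closed, with $|G:H|$ then denoting the natural number $[G:H]$. That is the reading the paper actually uses: in the proof of Anderson's proposition it is applied to conclude $[H:H\cap N]$ divides $|H|$, and in Anderson's theorem to $[S_q : S_q\cap N]$, where in both cases $N$ is an arbitrary finite-index (not assumed closed) subgroup, so neither $H\cap N$ nor $S_q\cap N$ is assumed closed. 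For a non-closed $H$, your computation only controls $[G:\overline{H}]$ (since $HU = \overline{H}U$, exactly as you note), and the remaining factor $[\overline{H}:H]$ --- in particular the dense case $\overline{H}=G$ --- is precisely what still needs an argument.

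That dense case is where the paper's proof does its real work, and the key idea is absent from your proposal. The paper passes to the core $N = \Core_G(H)$, a finite-index normal subgroup, and shows prime by prime that $[G:N]$ divides $|G|$. Given a prime $p$ dividing $[G:N]$, one picks $x \in G\setminus N$ with $x^p\in N$, observes that $\overline{\langle x\rangle}\cap N$ is a finite-index subgroup of the procyclic group $\overline{\langle x\rangle}$, and uses the fact that finite-index subgroups of procyclic groups (quotients of $\hat{\Z}$) are automatically open. This converts the abstract finite-index subgroup into an open one, so the easy divisibility you used applies inside $\overline{\langle x\rangle}$ and gives $p \mid |\overline{\langle x\rangle}| \mid |G|$; a further maximality argument with a suitable open $U$ then bootstraps this from $p$ to the full $p$-part of $[G:N]$. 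In short, your closing observation that the dense case ``becomes genuinely more delicate'' and must ``be treated on its own'' identifies exactly the content of the proposition; the procyclic-subgroup reduction is the missing idea, and without it the proof is incomplete.
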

\begin{proof}
	Let $N = \bigcap_{g \in G} H^g$ be the core of $H$ in $G$. Then $N$ is a finite index 
	normal subgroup of $G$ and it suffices to show that $|G:N|$ divides $|G|$.
	Let $p$ be a prime dividing $|G:N|$ and pick
	$x \in G \setminus N$ such that $x^p \in N$.
	Then $x$ has order $p$ in $\overline{\langle x \rangle } / 
	\overline{ \langle x \rangle } \cap N$.
	Every procyclic group is a quotient of $\hat{\Z}$ and hence  
	$\overline{ \langle x \rangle } \cap N$ is an open normal subgroup of 
	$\overline{ \langle x \rangle }$. Therefore $p$ divides
	$| \overline{ \langle x \rangle } | = |\overline{ \langle x \rangle } : \overline{ \langle x \rangle } \cap N | \cdot |\overline{ \langle x \rangle } \cap N |$, and thus $p$ divides $|G| = |G:\overline{ \langle x \rangle }| \cdot |\overline{ \langle x \rangle }|$.
	
	 Now let $n$ and $m$ be maximal such that $p^n$ divides $|G:N|$ and $p^m$
	divides $|G|$. Suppose towards a contradiction that  $m < n$. We have
	\[ |G| = \lcm \{ |G:U| : U \leq_o G \}, \]
	and as $m$ is finite, there is an open subgroup $U \leq_o G$ such that $p^m$ divides $|G:U|$. Then $p$ does not divide $|U|$ because $m$ is maximal. In particular, $p$ does not divide $|U : U \cap N|$ by the first part of this proof. Hence $m$ is maximal such that $p^m$ divides $|G: U \cap N|$ contradicting our assumption that $p^n$ divides $|G:N|$. 
\end{proof}

We also give the following definition in analogy to the finite case:
\begin{definition}
	Let $G$ be a profinite group, $H$ a closed subgroup of $G$, and let $\pi$ be a set of primes.
	\begin{itemize}
		\item[(a)] $H$ is a $\pi$-group if $|H|$ is a $\pi$-number.
		\item[(b)] $H$ is a $\pi$-Hall subgroup of $G$ if $|H|$ is a $\pi$-number and $|G:H|$ is a $\pi'$-number.
	\end{itemize}
A $p$-Sylow subgroup of $G$ is  a $\pi$-Hall subgroup of $G$ where $\pi = \{p\}$ consists of a single prime number.
\end{definition}

As in the finite case, $\pi$-groups are preserved under continuous homorphisms and we have:
	\begin{lemma}\label{l:Hall} Suppose $\phi : G \rightarrow K$ is a continuous homomorphism  between profinite groups and $H \leq_c G$ is a closed subgroup of $G$. 	If $H$ is a $\pi$-Hall subgroup of $G$, then $\phi(H)$ is a $\pi$-Hall subgroup of $\phi(K)$.
	\end{lemma}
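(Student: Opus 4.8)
The plan is to reduce at once to the case where $\phi$ is surjective, replacing $K$ by $\phi(G)$ (the group in the conclusion should of course be $\phi(G)$, since $\phi(H) \subseteq \phi(G)$). Observe that $\phi(H)$ is a closed subgroup of $\phi(G)$, being the continuous image of the compact group $H$ inside the Hausdorff group $\phi(G)$; hence both $|\phi(H)|$ and $|\phi(G):\phi(H)|$ are defined, and it suffices to check that $|\phi(H)|$ is a $\pi$-number and that $|\phi(G):\phi(H)|$ is a $\pi'$-number.

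For the first, I would use the continuous surjection $\phi|_H \colon H \to \phi(H)$. Given $V \trianglelefteq_o \phi(H)$, its preimage $W := (\phi|_H)^{-1}(V)$ is an open normal subgroup of $H$, and composing $\phi|_H$ with the quotient map $\phi(H) \to \phi(H)/V$ yields an isomorphism $H/W \cong \phi(H)/V$ of finite groups. Thus $|\phi(H)/V|$ divides $|H|$ --- immediately from the definition $|H| = \lcm\{|H/U| : U \trianglelefteq_o H\}$ --- and taking the least common multiple over all such $V$ shows that $|\phi(H)|$ divides $|H|$. Since $|H|$ is a $\pi$-number, so is $|\phi(H)|$; this is precisely the observation in the text that $\pi$-groups are preserved under continuous homomorphisms.

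For the second, fix $N \trianglelefteq_o \phi(G)$ and set $M := \phi^{-1}(N) \trianglelefteq_o G$. Since $\phi \colon G \to \phi(G)$ is surjective, it induces an isomorphism $G/M \to \phi(G)/N$ under which the subgroup $HM/M$ is carried onto $\phi(H)N/N$; hence $|\phi(G)/N : \phi(H)N/N| = |G/M : HM/M|$. The latter is one of the terms of the least common multiple $|G:H| = \lcm\{|G/U : HU/U| : U \trianglelefteq_o G\}$ (as $M$ is open normal in $G$), so it divides $|G:H|$ and is therefore a $\pi'$-number. Passing to the least common multiple over all $N \trianglelefteq_o \phi(G)$, we get that $|\phi(G):\phi(H)| = \lcm\{|\phi(G)/N : \phi(H)N/N| : N \trianglelefteq_o \phi(G)\}$ is a $\pi'$-number, as wanted.

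I do not expect a genuine obstacle: this is an instance of the ``pull information back from the finite quotients'' technique already used in Proposition~\ref{prop:gen_set}. The only things needing a little care are the bookkeeping with supernatural numbers --- in particular that a single index $|G/M : HM/M|$ with $M \trianglelefteq_o G$ really divides the supernatural number $|G:H|$ because it is one of the terms of the defining lcm, and that an isomorphic pair (finite group, subgroup) has the same index --- together with the harmless replacement of $K$ by $\phi(G)$.
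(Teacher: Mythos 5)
Your proof is correct, and it fills in exactly the details the paper elides: the paper simply asserts this lemma with the preceding remark that, ``as in the finite case, $\pi$-groups are preserved under continuous homomorphisms,'' and your argument makes that precise by unwinding the supernatural-number definitions of order and index via the lcm over open normal subgroups --- precisely the ``pull back from finite quotients'' technique the paper flags as the standard move. You also correctly spotted and repaired the typo in the statement: $\phi(K)$ should read $\phi(G)$.
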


 From this observation it follows easily that
 $H$ is a $\pi$-Hall subgroup of a profinite group $G$ if and only if $HN/N$ is a $\pi$-Hall subgroup in $G/N$ for all $N \trianglelefteq_o G$.
Thus we obtain:
\begin{theorem}[Theorem 2.3.5 of \cite{ribes}]
	Suppose $\pi$ is a set of primes and $G = \varprojlim G_i$ is the inverse limit of a surjective inverse system of finite groups $\{ G_i, \phi_{ij} \}$. Assume that every $G_i$ satisfies
	\begin{itemize}
		\item[(i)] $G_i$ contains a $\pi$-Hall subgroup,
		\item[(ii)] any $\pi$-subgroup of $G_i$ is contained in a $\pi$-Hall subgroup of $G_i$,
		\item[(iii)] any two $\pi$-Hall subgroups of $G_i$ are conjugate.
	\end{itemize}
	Then $G$ satisfies the corresponding condition
	\begin{itemize}
		\item[(i')] $G$ contains a $\pi$-Hall subgroup,
		\item[(ii')] any closed $\pi$-subgroup of $G$ is contained in a $\pi$-Hall subgroup of $G$,
		\item[(iii')] any two $\pi$-Hall subgroups of $G$ are conjugate.
	\end{itemize}
\end{theorem}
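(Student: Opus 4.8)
The plan is to reduce each of the three statements to its finite analogue by the same compactness argument used in the proof of Proposition~\ref{prop:gen_set}(b): realize a suitable family of finite nonempty sets (Hall subgroups, or conjugating elements) as an inverse system and pick a point of its inverse limit. Throughout I write $K_i = \ker(G \to G_i) \trianglelefteq_o G$ for the kernel of the canonical projection. Since the $G_i$ carry the discrete topology and the index set is directed, $\{K_i\}$ is a neighbourhood basis of the identity consisting of open normal subgroups, hence every open normal subgroup of $G$ contains some $K_i$. Consequently, for any closed subgroup $H \leq_c G$ the family $\{K_i\}$ (resp. $\{H \cap K_i\}$) is cofinal among the open normal subgroups of $G$ (resp. of $H$), so $|G:H| = \lcm\{\,|G_i : H_i| : i\,\}$ and $|H| = \lcm\{\,|H_i| : i\,\}$, where $H_i$ denotes the image of $H$ under $G \to G_i$; here $H_i = HK_i/K_i$ because the restriction $H \to H_i$ is surjective, $\{H_i\}$ being a surjective inverse system of finite (hence compact) groups with $H = \varprojlim H_i$. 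In particular, $H$ is a $\pi$-Hall subgroup of $G$ if and only if $H_i$ is a $\pi$-Hall subgroup of $G_i$ for every $i$. The only delicate point of the whole argument is this bookkeeping with supernatural numbers; once it is in place, the three parts follow mechanically.

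For (i'), let $\mathcal{H}_i$ be the set of $\pi$-Hall subgroups of $G_i$: it is nonempty by (i) and finite since $G_i$ is finite. By Lemma~\ref{l:Hall} the surjection $\phi_{ij}$ sends a $\pi$-Hall subgroup of $G_i$ to a $\pi$-Hall subgroup of $G_j$, so $\{\mathcal{H}_i, \phi_{ij}|_{\mathcal{H}_i}\}$ is an inverse system of nonempty finite sets. Its inverse limit is nonempty by compactness; a compatible family $(H_i)$ gives a closed subgroup $H = \varprojlim H_i \leq_c G$ whose image in each $G_i$ is $H_i$, so $H$ is a $\pi$-Hall subgroup of $G$ by the observation of the first paragraph.

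For (ii'), let $P \leq_c G$ be a closed $\pi$-subgroup; its image $P_i$ in $G_i$ is a $\pi$-subgroup (a continuous homomorphic image of a $\pi$-group is a $\pi$-group), so by (ii) the set $\mathcal{H}_i^{P}$ of $\pi$-Hall subgroups of $G_i$ containing $P_i$ is nonempty and finite. Since $\phi_{ij}(P_i) = P_j$, Lemma~\ref{l:Hall} shows that $\{\mathcal{H}_i^{P}, \phi_{ij}|\}$ is again an inverse system of nonempty finite sets; a point $(H_i)$ of its inverse limit yields a $\pi$-Hall subgroup $H = \varprojlim H_i$ of $G$ with $P_i \leq H_i$ for all $i$, hence $P = \varprojlim P_i \leq \varprojlim H_i = H$.

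For (iii'), let $H, H'$ be $\pi$-Hall subgroups of $G$ with images $H_i, H_i'$ in $G_i$, which are $\pi$-Hall subgroups of $G_i$ by the first paragraph. By (iii) the set $C_i = \{\, g \in G_i : H_i^{\,g} = H_i'\,\}$ is nonempty and finite, and since $\phi_{ij}$ is a homomorphism, $g \in C_i$ forces $\phi_{ij}(g) \in C_j$; thus $\{C_i, \phi_{ij}|_{C_i}\}$ is an inverse system of nonempty finite sets. Picking $g = (g_i)$ in its inverse limit and viewing it in $G$, the image of $H^g$ in every $G_i$ equals $H_i^{\,g_i} = H_i'$, which is also the image of $H'$; as $\bigcap_i K_i = 1$, a closed subgroup of $G$ is determined by its images in the $G_i$, so $H^g = H'$. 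As noted, the main (and only real) obstacle is the supernatural-number bookkeeping of the opening paragraph; with that settled, all three transfers are routine.
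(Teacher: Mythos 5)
Your proof is correct and follows essentially the same route the paper indicates: it uses Lemma~\ref{l:Hall} together with the observation that a closed subgroup is a $\pi$-Hall subgroup of $G$ iff its image in each $G_i$ is a $\pi$-Hall subgroup, and then runs the standard inverse-limit-of-nonempty-finite-sets compactness argument (exactly as in Proposition~\ref{prop:gen_set}(b) and Proposition~\ref{schur}). You have merely written out in full the steps the paper compresses into ``From this observation it follows easily \dots\ Thus we obtain,'' so there is no real divergence from the intended argument.
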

As a corollary we obtain the Sylow Theorem for profinite groups:

\begin{corollary}[Corollary 2.3.6 of \cite{ribes}] \label{cor:p_sylow}
	Suppose $G$ is a profinite group and $p$ is a prime. Then the following hold:
	\begin{itemize}
		\item[(a)] G contains a $p$-Sylow subgroup,
		\item[(b)] any $p$-subgroup of $G$ is contained in a $p$-Sylow subgroup of $G$ and
		\item[(c)] any two $p$-Sylow subgroups of $G$ are conjugate.
	\end{itemize}
\end{corollary}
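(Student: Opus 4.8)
The plan is to obtain this as an immediate application of the preceding theorem, taken with the set of primes $\pi = \{p\}$, feeding in the classical Sylow theorems for finite groups as the hypotheses. First I would write $G$ as the inverse limit $G = \varprojlim_{N \trianglelefteq_o G} G/N$ over its open normal subgroups; the transition maps are surjective and each $G/N$ is finite, so this is a surjective inverse system of finite groups to which the preceding theorem applies.

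Next I would note that for $\pi = \{p\}$ a $\pi$-Hall subgroup is exactly a $p$-Sylow subgroup, so conditions (i), (ii), (iii) of the theorem say, for each quotient $G/N$: that $G/N$ has a $p$-Sylow subgroup, that every $p$-subgroup of $G/N$ is contained in one, and that any two $p$-Sylow subgroups of $G/N$ are conjugate. These are precisely the statements of the Sylow theorems in finite group theory, hence they hold in every finite group and in particular in every $G/N$. The theorem then yields the corresponding conclusions (i$'$), (ii$'$), (iii$'$) for $G$, and these are literally (a), (b), (c).

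The only point needing a word of care is the phrasing of (b): since the order of a subgroup, and hence the notion of being a $p$-group, has only been defined for closed subgroups, ``$p$-subgroup'' in (b) is to be read as ``closed subgroup that is a pro-$p$ group'', equivalently a closed subgroup whose order is a power of $p$; with this reading (b) is exactly (ii$'$) for $\pi = \{p\}$. I do not expect any genuine obstacle here: all the real work, namely promoting existence, embedding and conjugacy statements from the finite quotients to the profinite group by a compactness argument, is already contained in the preceding theorem, and what remains is simply to recognise its hypotheses as the finite Sylow theorems and read off the conclusions.
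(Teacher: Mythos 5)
Your proposal is correct and is exactly what the paper intends: it states this result immediately after the theorem on $\pi$-Hall subgroups with the words ``as a corollary we obtain the Sylow Theorem for profinite groups,'' giving no further argument, so the specialization $\pi = \{p\}$ together with the classical finite Sylow theorems for the quotients $G/N$ is precisely the expected justification. Your clarifying remark about reading ``$p$-subgroup'' as ``closed pro-$p$ subgroup'' is also consistent with the paper's conventions.
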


We also need the following analog of the finite situation:

\begin{corollary}[Proposition 2.3.8 of \cite{ribes}] \label{cor:pro_nilpotent}
	A profinite group $G$ is pronilpotent if and only if for each prime $p$, $G$ contains a unique $p$-Sylow subgroup. In that case, $G$ is the direct product of its Sylow subgroups.
\end{corollary}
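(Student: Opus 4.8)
The plan is to reduce everything to the finite continuous quotients $G/N$ with $N \trianglelefteq_o G$ and to import the classical fact that a finite group is nilpotent if and only if each of its Sylow subgroups is normal (equivalently, unique), in which case it is the internal direct product of them. Recall that $G$ is pronilpotent precisely when every quotient $G/N$, $N \trianglelefteq_o G$, is nilpotent (this is the usual translation between pro-$\mathcal{C}$ groups and their finite quotients for the formation $\mathcal{C}$ of finite nilpotent groups), and that by Corollary~\ref{cor:p_sylow} every profinite group already possesses $p$-Sylow subgroups for each prime $p$, any two of which are conjugate; so the only issue on the ``unique'' side is normality, and the conjugacy clause will upgrade normality to uniqueness for free.

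\emph{Pronilpotent $\Rightarrow$ unique Sylow subgroups.} Fix a prime $p$ and a $p$-Sylow subgroup $P \leq_c G$. For each $N \trianglelefteq_o G$, Lemma~\ref{l:Hall} applied to the quotient map $G \rightarrow G/N$ shows that $PN/N$ is a $p$-Sylow subgroup of $G/N$; since $G/N$ is nilpotent, $PN/N$ is normal in $G/N$, so $PN$ is an open normal subgroup of $G$. Applying Proposition~\ref{prop:prof_closed_subset}(a) with $X = P$ gives $P = \overline{P} = \bigcap_{N \trianglelefteq_o G} PN$, an intersection of open normal subgroups, hence $P \trianglelefteq_c G$. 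A normal subgroup is its own only conjugate, so by the conjugacy part of Corollary~\ref{cor:p_sylow}, $P$ is the unique $p$-Sylow subgroup of $G$.

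\emph{Unique Sylow subgroups $\Rightarrow$ pronilpotent.} Suppose for each prime $p$ that $G$ has a unique $p$-Sylow subgroup $P_p$; as above uniqueness forces $P_p \trianglelefteq_c G$. Fix $N \trianglelefteq_o G$ and let $S/N$ be any $p$-Sylow subgroup of $G/N$. By Lemma~\ref{l:Hall}, $P_pN/N$ is also a $p$-Sylow subgroup of $G/N$, so Sylow's theorem in the finite group $G/N$ gives $S/N = (P_pN/N)^{gN} = P_p^{\,g}N/N$ for some $g \in G$; and $P_p^{\,g} = P_p$ by uniqueness, whence $S = P_pN$. Thus $G/N$ has a unique $p$-Sylow subgroup for every $p$, so $G/N$ is nilpotent, and therefore $G = \varprojlim_{N \trianglelefteq_o G} G/N$ is pronilpotent.

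\emph{The direct product decomposition, and the main obstacle.} It remains to assemble the closed normal subgroups $P_p$ into the topological direct product. Reading off the finite nilpotent quotients $G/N = \prod_p P_pN/N$ one gets: $[P_p,P_q] = 1$ for $p \neq q$; for each $N$ all but finitely many $P_p$ lie in $N$ (as $G/N$ is finite); the $P_p$ together topologically generate $G$; and $P_p \cap \overline{\langle P_q : q \neq p \rangle} = 1$, the latter subgroup being pro-$p'$ since its image in each $G/N$ is a $p'$-group. The first two facts make the net of partial products $\bigl(\prod_{q \leq q_0} x_q\bigr)_{q_0}$ eventually constant modulo every $N$, hence convergent, so $(x_p)_p \mapsto \prod_p x_p$ is a well-defined continuous homomorphism $\prod_p P_p \rightarrow G$; by the third fact its image (closed, containing every $P_p$) is all of $G$, and by the fourth its kernel is trivial, so this continuous bijection of compact Hausdorff groups is a topological isomorphism $G \cong \prod_p P_p$. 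I expect the only point needing genuine care to be the repeated passage between Sylow subgroups of $G$ and those of the quotients $G/N$ when $N$ is not itself a $p$-group --- i.e. the bookkeeping around Lemma~\ref{l:Hall} and the remark following it, used both to see that $P_pN/N$ is an honest $p$-Sylow subgroup of $G/N$ and that every $p$-Sylow subgroup of $G/N$ has this shape --- together with the (similarly quotient-by-quotient) check that $\overline{\langle P_q : q \neq p\rangle}$ is a $p'$-group.
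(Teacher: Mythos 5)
Your proof of the forward implication (pronilpotent $\Rightarrow$ unique $p$-Sylow subgroups) is the same in spirit as the paper's: both pass to the finite nilpotent quotients and invoke Lemma~\ref{l:Hall}, with the only cosmetic difference that you derive normality of $P$ from $P = \bigcap_{N \trianglelefteq_o G} PN$ (an intersection of open normal subgroups) and then appeal to conjugacy, whereas the paper directly compares $\phi_i(P_1)$ with $\phi_i(P_2)$ in each $G_i$. Where you genuinely diverge is in scope: the paper proves only this one direction and implicitly defers the converse and the direct-product decomposition to the cited source (\cite{ribes}), while you supply complete arguments for both. Your converse is correct --- pushing down to $G/N$, conjugacy in the finite quotient together with uniqueness of $P_p$ forces every $p$-Sylow of $G/N$ to equal $P_pN/N$, so each $G/N$ is nilpotent. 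Your decomposition is also sound: $[P_p,P_q]=1$ is read off quotient by quotient, the ``eventually in $N$'' observation (only finitely many primes divide $|G/N|$) makes the infinite product converge, topological generation gives surjectivity, and $P_p \cap \overline{\langle P_q : q\neq p\rangle} = 1$ (a pro-$p$ group meeting a pro-$p'$ group) gives injectivity, after which compactness upgrades the continuous bijection to an isomorphism of topological groups. The one small bookkeeping point you correctly flag --- that $P_pN/N$ is not just some $p$-subgroup but an honest $p$-Sylow of $G/N$, and that every $p$-Sylow of $G/N$ arises this way --- is exactly what Lemma~\ref{l:Hall} and the conjugacy clause of Corollary~\ref{cor:p_sylow} are there for, so there is no gap. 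In short: same core idea as the paper on the direction it proves, but you have given a self-contained proof of the whole corollary rather than a fragment.
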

\begin{proof}
	If $G$ is pronilpotent, then $G$ is the inverse limit $G = \varprojlim G_i$ of a surjective inverse system of finite nilpotent groups. Let $\phi_i: G \rightarrow G_i$ be the projection map and assume that $P_1$ and $P_2$ are $p$-Sylow subgroups of $G$. Then $\phi_i(P_1) = \phi_i(P_2)$ for all $i$ as both $\phi_i(P_1)$ and $\phi_i(P_2)$ are $p$-Sylow subgroups of $G_i$ by Lemma~\ref{l:Hall} and $G_i$ is nilpotent.
\end{proof}

Let $K$ be a closed normal subgroup of a profinite group $G$. A \emph{complement} of $K$ in $G$ is a closed subgroup $H$ of $G$ such that $G = KH$ and $K \cap H = 1$, so  $G = K \rtimes H$. In particular, if $K$ is a $\pi$-Hall subgroup of $G$ then $H \cong G/K$ is a $\pi'$-subgroup of $G$.

The following is a generalization of the Schur-Zassenhaus Lemma  for finite groups (see Theorem 7.41 and Theorem 7.42 in \cite{rotman}) to profinite groups.

\begin{proposition}[Proposition 2.3.15 of \cite{ribes}] \label{schur}
	Suppose $K$ is a closed normal Hall subgroup of a profinite group $G$. Then $K$ has a complement in $G$ and any two complements of $K$ are conjugate in $G$.
\end{proposition}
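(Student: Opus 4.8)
The plan is to deduce both the existence and the conjugacy assertion from the finite Schur--Zassenhaus Lemma by passing to the finite quotients $G/N$ with $N \trianglelefteq_o G$, and then transporting the conclusion back to $G$ by the same kind of compactness argument used in the proof of Proposition~\ref{prop:gen_set}(b). The reason this works is that, if $K$ is a $\pi$-Hall subgroup of $G$, then for each $N \trianglelefteq_o G$ the image $KN/N \cong K/(K\cap N)$ is a finite $\pi$-group, it is normal in $G/N$, and $(G/N)/(KN/N) \cong G/KN$ is a $\pi'$-group (being a continuous quotient of the $\pi'$-group $G/K$); hence $KN/N$ is a normal Hall subgroup of the finite group $G/N$, and the finite theory applies in each quotient.

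For existence I would let $\mathcal{D}_N$ be the set of open subgroups $L$ of $G$ with $N \le L$ such that $L/N$ is a complement of $KN/N$ in $G/N$; this set is finite, and nonempty by the finite Schur--Zassenhaus Lemma. The key (routine) check is that for $N \le M$ the assignment $L \mapsto LM$ maps $\mathcal{D}_N$ into $\mathcal{D}_M$: one has $(KM/M)(LM/M) = KLM/M = G/M$ since $KL = G$, while $LM/M$ is a quotient of $L/N$, which has order $|G/KN|$, a $\pi'$-number, so $(KM/M)\cap(LM/M)$ divides both a $\pi$- and a $\pi'$-number and is trivial. Thus $\{\mathcal{D}_N\}$ is an inverse system of nonempty finite sets, and by compactness there is a compatible family $(L_N)_N$, i.e.\ $L_N M = L_M$ whenever $N \le M$. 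Setting $H = \bigcap_{N \trianglelefteq_o G} L_N$, a closed subgroup, I would verify $HN = L_N$ for all $N$; the nontrivial inclusion $L_N \subseteq HN$ holds because, for $x \in L_N$, the cosets-intersections $xN \cap L_M$ with $M \le N$ form a nested family of nonempty closed subsets of the compact group $G$ whose intersection is $xN \cap H$. Consequently $KHN/N = (KN/N)(HN/N) = G/N$ for every $N$, so $\overline{KH} = G$ by Proposition~\ref{prop:limit_closure}; since $KH$ is compact, hence closed, $G = KH$. Likewise $(K\cap H)N \subseteq N$ for all $N$, so $K \cap H \subseteq \bigcap_N N = 1$, and $H$ is a complement of $K$.

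For the conjugacy statement, given two complements $H_1, H_2$ I would note that each $H_iN/N$ is a complement of $KN/N$ in $G/N$ (same computation, using that $H_i \cong G/K$ is a $\pi'$-group), so by the conjugacy part of the finite Schur--Zassenhaus Lemma the set $T_N = \{\, gN \in G/N : (H_1N/N)^{gN} = H_2N/N \,\}$ is finite and nonempty; since the transition map $\pi_{NM}$ is a homomorphism it carries $T_N$ into $T_M$, so $\varprojlim T_N$ is nonempty by compactness, yielding $g \in G$ with $H_1^g N = H_2 N$ for all $N \trianglelefteq_o G$, whence $H_1^g = \bigcap_N H_1^g N = \bigcap_N H_2 N = H_2$ by Proposition~\ref{prop:limit_closure}. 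The step I expect to require the most care is verifying that the transition maps genuinely restrict to the sets of complements and of conjugators: this is precisely where the Hall hypothesis is used, since it is the coprimality of the $\pi$- and $\pi'$-parts that forces the relevant intersections to collapse. Everything else is either an appeal to the finite Schur--Zassenhaus Lemma (so that the fibres are nonempty) or the standard fact that an inverse limit of nonempty finite sets is nonempty, combined with Proposition~\ref{prop:limit_closure} to identify the limiting object.
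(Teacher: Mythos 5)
Your proposal is correct and follows essentially the same route as the paper's proof: pass to the finite quotients $G/N$, apply the finite Schur--Zassenhaus theorem there, assemble the complements (resp.\ conjugating elements) into an inverse system of nonempty finite sets, and extract a compatible family by compactness. You fill in the verifications (that the transition maps preserve the sets of complements/conjugators and that $H=\bigcap_N L_N$ really satisfies $HN=L_N$) that the paper leaves implicit, but the underlying argument is identical.
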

\begin{proof}
	For any open normal subgroup $N\trianglelefteq_o G$, the group $KN/N$ is a Hall subgroup 
	of the finite group $G/N$ and hence has a complement by  the Schur-Zassenhaus theorem 
	for finite groups. The set of complements of the  $KN/N$ in $G/N$ forms an inverse 
	system of finite subgroups whose limit $H$, say, is a closed subgroup of $G$ 
	and a complement for $K$ in $G$.

	Now assume $L$ is another complement of $K$ in $G$. To see that $H$ and $L$ are conjugate
	in $G$, note that $HN/N$ and $LN/N$ are both complements of $KN/N$ in $G/N$ for all 
	$N \trianglelefteq_o G$ and hence conjugate in $G/N$ by the Schur-Zassenhaus theorem 
	for finite groups. As before we obtain an inverse system of the conjugating elements
	whose limit will conjugate $L$ to $H$.
\end{proof}

\subsection{Frattini subgroup}
A proper subgroup $H < G$ is said to be maximal in $G$ if there is no subgroup $K <G$ such that $H < K <G$.
A maximal open subgroup is an open subgroup which is maximal with respect to open subgroups.

\begin{definition}
	Let $G$ be a profinite group. The Frattini subgroup $\Phi(G)$ of $G$ is the intersection of all maximal open subgroups.	
\end{definition}
Clearly, the Frattini subgroup of a profinite group is topologically characteristic. Moreover, as in the finite case it is precisely the set of \emph{nongenerators}:

\begin{proposition}[Proposition 1.9 (iii) of \cite{dixon}] \label{prop:frattini}
	Suppose $G$ is a profinite group and $X \subseteq G$ is a subset of $G$. Then $X$ generates $G$ topologically if and only if $X \cup \Phi(G)$ generates $G$ topologically.
\end{proposition}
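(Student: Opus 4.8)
The plan is to treat the two implications separately, with the forward one trivial and the reverse one handled by contraposition. If $X$ topologically generates $G$, then since $X \subseteq X \cup \Phi(G)$ we immediately get $\overline{\langle X \cup \Phi(G)\rangle} = G$; that direction needs no further argument.

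For the converse I would prove the contrapositive: assuming $X$ does \emph{not} topologically generate $G$, i.e.\ that $H := \overline{\langle X\rangle}$ is a proper closed subgroup, I show that $X \cup \Phi(G)$ does not topologically generate $G$ either. The crucial claim is that $H$ is contained in some maximal open subgroup $M$ of $G$. By Proposition~\ref{prop:prof_closed_subset}(b), $H$ is the intersection of all open subgroups of $G$ containing it, and since $H \neq G$ at least one such open subgroup $U$ is proper. Open subgroups of a profinite group have finite index, so any strictly ascending chain of subgroups starting at $U$ strictly decreases the (finite) index over $G$ at each step and hence terminates; thus $U$ is contained in a maximal proper subgroup $M$. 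As $M$ contains the open subgroup $U$, it is a union of cosets of $U$ and therefore open, so $M$ is a maximal open subgroup.

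Now $\Phi(G) \subseteq M$ by the definition of the Frattini subgroup, and $X \subseteq H \subseteq U \subseteq M$, so $X \cup \Phi(G) \subseteq M$. Since an open subgroup is also closed (its complement is a union of open cosets), we conclude $\overline{\langle X \cup \Phi(G)\rangle} \subseteq M \subsetneq G$. Hence $X \cup \Phi(G)$ does not topologically generate $G$, which is exactly the contrapositive we wanted.

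The one point requiring care is the production of the maximal open subgroup above $H$: it combines Proposition~\ref{prop:prof_closed_subset}(b), used to pass from the closed subgroup $H$ to a proper \emph{open} overgroup, with the finite-index property of open subgroups, used to guarantee that the subgroups between $U$ and $G$ have a maximal element. The remaining observations — that open subgroups are closed, and that $X$ lies in its own topological closure — are routine.
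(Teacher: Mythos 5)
Your proof is correct and follows essentially the same route as the paper's: prove the trivial forward direction, then for the converse take the contrapositive, pass from the proper closed subgroup $\overline{\langle X\rangle}$ to a proper open overgroup via Proposition~\ref{prop:prof_closed_subset}(b), and locate it inside a maximal open subgroup $M$, which absorbs $\Phi(G)$. The only difference is that you supply the justification for the existence of the maximal open subgroup (finite index forces ascending chains above an open subgroup to terminate, and a subgroup containing an open subgroup is open), which the paper leaves implicit.
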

\begin{proof}
	Clearly, if $X$ generates, then so does $X \cup \Phi(G)$. For the converse let 
	$H \leq_o G$ be an open subgroup such that $X \subseteq H$. 
	 If $H \neq G$,
	then $H$ is contained in some maximal open subgroup $M <_o G$ and therefore
	\[ \overline{\langle X \rangle} \Phi(G) \leq M \neq G. \]
\end{proof}

\subsection{Pro-$p$ groups}

Using the analogous results for finite $p$-groups and pulling back along the inverse system one proves the following:
\begin{lemma}[Lemma 2.8.7 of \cite{ribes}] \label{lem:pro_p_frattini}
	Suppose $G$ is a pro-$p$ group.
	\begin{itemize}
		\item[(a)] Every maximal open subgroup $M \leq_o G$ has index $p$ and is normal in~$G$.
		\item[(b)] The quotient $G/\Phi(G)$ is an elementary abelian profinite group and thus a vector space over $\mathbb{F}_p$.
		\item[(c)] The Frattini subgroup is given by $\Phi(G) = \overline{G^p[G,G]}$.
	\end{itemize}
\end{lemma}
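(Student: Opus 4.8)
The plan is to deduce all three parts from the corresponding facts about finite $p$-groups by working in the finite quotients $G/N$, $N \trianglelefteq_o G$, each of which is a finite $p$-group since $G$ is pro-$p$. The three finite-group inputs I will use are: a maximal subgroup of a finite $p$-group has index $p$ and is normal; and $\Phi(P) = P^p[P,P]$ with $P/\Phi(P)$ elementary abelian (both consequences of nilpotency of finite $p$-groups).

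For (a), let $M \leq_o G$ be a maximal open subgroup. Since $M$ is open it has finite index, hence only finitely many conjugates, each open (conjugation is a homeomorphism of $G$); therefore the core $N := \Core_G(M)$ is a finite intersection of open subgroups, so an open normal subgroup of $G$. Now $G/N$ is a finite $p$-group and $M/N$ is one of its maximal subgroups, so $M/N$ has index $p$ and is normal in $G/N$, hence the same holds for $M$ in $G$.

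The key intermediate step is the identity $\Phi(G) = \bigcap_{N \trianglelefteq_o G} \pi_N^{-1}(\Phi(G/N))$, where $\pi_N : G \to G/N$ denotes the projection. One inclusion uses (a): every maximal open $M$ equals $\pi_N^{-1}(M/N)$ for $N = \Core_G(M)$, with $M/N$ maximal in $G/N$, so $M$ contains the right-hand side and hence so does $\Phi(G)$... conversely, if $\bar M$ is maximal in the finite $p$-group $G/N$ then $\pi_N^{-1}(\bar M)$ is an open subgroup of index $p$, hence maximal, so $\Phi(G) \subseteq \pi_N^{-1}(\bar M)$; intersecting over all such $\bar M$ and all $N$ gives the reverse inclusion.

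Parts (b) and (c) now follow. For (b), the product of the projections gives a continuous homomorphism $G \to \prod_N (G/N)/\Phi(G/N)$ with kernel exactly $\Phi(G)$, exhibiting $G/\Phi(G)$ as a closed subgroup of a product of finite elementary abelian $p$-groups, hence a profinite elementary abelian $p$-group, i.e.\ an $\mathbb{F}_p$-vector space with the profinite topology. For (c), the finite-group identity $\Phi(G/N) = (G/N)^p[G/N,G/N]$ together with surjectivity of $\pi_N$ gives $(G/N)^p[G/N,G/N] = \pi_N(G^p[G,G])$, so $\pi_N^{-1}(\Phi(G/N)) = G^p[G,G]\,N$ and therefore $\Phi(G) = \bigcap_{N \trianglelefteq_o G} G^p[G,G]\,N = \overline{G^p[G,G]}$ by Proposition~\ref{prop:prof_closed_subset}(a). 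The only step requiring genuine care is the identity for $\Phi(G)$ in the previous paragraph — in particular that the core of a maximal open subgroup is open, so that every maximal open subgroup is pulled back from a finite quotient — plus the routine bookkeeping of pushing $p$-th powers and commutators through the surjections $\pi_N$; once that is in place, the named facts about finite $p$-groups and Proposition~\ref{prop:prof_closed_subset} make the rest automatic.
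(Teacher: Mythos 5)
Your proof is correct and follows exactly the approach the paper indicates in the sentence preceding the lemma (``using the analogous results for finite $p$-groups and pulling back along the inverse system''); the paper itself gives no proof, citing \cite{ribes} instead. The key observation you supply — that the core of a maximal open subgroup is open and normal, so every maximal open subgroup is pulled back from a finite quotient, yielding $\Phi(G)=\bigcap_{N\trianglelefteq_o G}\pi_N^{-1}(\Phi(G/N))$ — is precisely the right intermediate step, and the rest is the routine transfer you describe.
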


\begin{corollary}[Lemma 2.8.10 of \cite{ribes}] \label{cor:pro_p_frattini_open}
	Suppose $G$ is a pro-$p$ group. Then $G$ is finitely generated if and only if $\Phi(G)$ is open in $G$.
\end{corollary}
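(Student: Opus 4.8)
The plan is to use Lemma~\ref{lem:pro_p_frattini}(b), which identifies $G/\Phi(G)$ with an $\mathbb{F}_p$-vector space, together with Proposition~\ref{prop:frattini} (the Frattini subgroup consists of nongenerators), to reduce finite generation of $G$ to finite dimension of this vector space, and then to relate finite dimension to openness of $\Phi(G)$.

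First I would prove the easy direction: if $\Phi(G)$ is open in $G$, then $G/\Phi(G)$ is a finite group (open subgroups of a profinite group have finite index), hence is generated by finitely many elements $\bar g_1,\dots,\bar g_d$; lifting to $g_1,\dots,g_d \in G$, the set $\{g_1,\dots,g_d\}\cup\Phi(G)$ generates $G$ topologically, so by Proposition~\ref{prop:frattini} already $\{g_1,\dots,g_d\}$ generates $G$ topologically, and $G$ is finitely generated.

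For the converse, suppose $G$ is finitely generated, say by $d$ elements. Then for every $N\trianglelefteq_o G$ the quotient $G/N$ is a finite $p$-group generated by $d$ elements, so by the finite-group theory underlying Lemma~\ref{lem:pro_p_frattini} we have $\dim_{\mathbb{F}_p}\bigl((G/N)/\Phi(G/N)\bigr)\le d$. Pushing this down the inverse system, $G/\Phi(G) \cong \varprojlim (G/N)/\Phi(G/N)$ is an inverse limit of $\mathbb{F}_p$-vector spaces of dimension at most $d$, hence is itself an $\mathbb{F}_p$-vector space of dimension at most $d$, and therefore finite. Since $\Phi(G)$ is closed (it is an intersection of open, hence closed, subgroups) and has finite index in $G$, it is open. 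Alternatively, and more directly, one can invoke Proposition~\ref{prop:frattini}: if $g_1,\dots,g_d$ generate $G$ topologically then their images generate $G/\Phi(G)$ topologically, and a topologically finitely generated elementary abelian pro-$p$ group is finite, so again $\Phi(G)$ is a closed subgroup of finite index, hence open.

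The only genuinely delicate point is the bookkeeping in the converse: checking that $G/\Phi(G)$ really is the inverse limit of the $(G/N)/\Phi(G/N)$ (equivalently, that $\Phi(G)=\bigcap_{N\trianglelefteq_o G}\Phi(G)N$, which follows from Proposition~\ref{prop:limit_closure} once one knows $\Phi(G/N)$ is the image of $\Phi(G)$), and that a bound on the dimension of each finite quotient forces a bound on the dimension of the limit. Neither is hard, but this is where care is needed; the rest is immediate from the cited results.
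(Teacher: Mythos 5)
Your proposal is correct, and the easy direction (open $\Phi(G)$ implies finite generation) is essentially identical to the paper's: both lift finitely many generators of the finite quotient $G/\Phi(G)$ and invoke Proposition~\ref{prop:frattini}.

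For the converse you take a genuinely different route. The paper notes that every maximal open subgroup of $G$ has index $p$ (Lemma~\ref{lem:pro_p_frattini}(a)) and that a finitely generated profinite group has only finitely many open subgroups of any given finite index (Proposition~\ref{prop:fin_gen_fin_subgps}), so that $\Phi(G)$ is a \emph{finite} intersection of open subgroups and hence open. You instead work in the quotient $G/\Phi(G)$, using Lemma~\ref{lem:pro_p_frattini}(b) to identify it as an elementary abelian pro-$p$ group, and then observing that a topologically finitely generated elementary abelian pro-$p$ group is finite: the abstract subgroup generated by the $d$ generators has at most $p^d$ elements, is therefore closed, and hence equals its closure, i.e.\ the whole group. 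Then $\Phi(G)$, being closed (an intersection of open subgroups) and of finite index, is open. Both arguments are short; the paper's recycles Proposition~\ref{prop:fin_gen_fin_subgps}, which is already available, while yours exploits the $\mathbb{F}_p$-linear structure of $G/\Phi(G)$ and is arguably more self-contained. Your first, inverse-limit version of the converse (via $\varprojlim (G/N)/\Phi(G/N)$) is also valid, but the ``more direct'' variant at the end of your paragraph is cleaner and sidesteps the bookkeeping you yourself flag as the delicate point.
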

\begin{proof}
    Since every maximal open subgroup of $G$ has index $p$, if $G$ is finitely generated, then 
    by Proposition~\ref{prop:fin_gen_fin_subgps} there are only finitely many.
	Hence $\Phi(G)$ is a finite intersection of open subgroups and thus open.
	
	If conversely, $\Phi(G)$ is open, there is a finite set $X \subseteq G$ such that the 
	finite group $G/\Phi(G)$ is generated by the image of $X$. Then 
	$G = \overline{\langle X \rangle} \Phi(G)$ and thus
	$G = \overline{\langle X \rangle }$ by Proposition~\ref{prop:frattini}.
\end{proof}

\begin{definition}[Definition 1.15 of \cite{dixon}] \label{lower_p}
	Let $G$ be a pro-$p$ group. The \textit{lower $p$-series} is given by
	\[ P_1 = G \text{ and } P_{i+1} = \overline{P_i(G)^p [P_i(G),G] } \text{ for all } i \geq 1. \]
	We will use the notation $G_i = P_i(G)$.
\end{definition}

Note that all $P_i(G)$ are topologically characteristic, that $P_2(G) = \Phi(G)$, and that $P_{i+1}(G) \geq \Phi(P_i(G))$ for all $i$ by Lemma~\ref{lem:pro_p_frattini}.

\begin{proposition}[Proposition 1.16 of \cite{dixon}]
	Suppose $G$ is a pro-$p$ group.
	\begin{itemize}
		\item[(a)] $P_i(G/K) = P_i(G)K/K$ for all $K \trianglelefteq_c G$ and all $i$.
		\item[(b)] If $G$ is finitely generated, then $\{ P_i(G) : i \geq 1 \}$ is a  basis for the open neighborhoods of $1$ in $G$.
	\end{itemize} 
\end{proposition}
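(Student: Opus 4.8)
The plan is to establish (a) by induction on $i$, exploiting that a continuous surjective homomorphism between profinite groups is a closed map, and then to derive (b) from (a) together with Corollary~\ref{cor:pro_p_frattini_open} and Proposition~\ref{prop:fin_gen_fin_subgps}.

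For (a), write $\pi\colon G\to G/K$ for the canonical projection, so that $P_i(G)K/K=\pi(P_i(G))$. The case $i=1$ is immediate, and for the inductive step I would assume $\pi(P_i(G))=P_i(G/K)$. Since $\pi$ is a surjective homomorphism, it maps the subgroup $P_i(G)^p[P_i(G),G]$ onto $\pi(P_i(G))^p[\pi(P_i(G)),G/K]=P_i(G/K)^p[P_i(G/K),G/K]$. Because $G$ is compact and $G/K$ is Hausdorff, $\pi$ is closed, and hence $\pi(\overline A)=\overline{\pi(A)}$ for every $A\subseteq G$: the inclusion ``$\subseteq$'' follows from continuity (as $\pi^{-1}(\overline{\pi(A)})$ is closed and contains $A$), and ``$\supseteq$'' holds because $\pi(\overline A)$ is closed and contains $\pi(A)$. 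Applying this to $A=P_i(G)^p[P_i(G),G]$ yields $\pi(P_{i+1}(G))=P_{i+1}(G/K)$, which is the claim.

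For (b), suppose $G$ is finitely generated. First I would check by induction that each $P_i(G)$ is open: this is clear for $i=1$, it is exactly Corollary~\ref{cor:pro_p_frattini_open} for $i=2$ (as $P_2(G)=\Phi(G)$), and if $P_i(G)$ is open then it is a finitely generated pro-$p$ group by Proposition~\ref{prop:fin_gen_fin_subgps}, so $\Phi(P_i(G))$ is open in $P_i(G)$ (again Corollary~\ref{cor:pro_p_frattini_open}), hence open in $G$; since $P_{i+1}(G)\supseteq\Phi(P_i(G))$ by the remark after Definition~\ref{lower_p}, the subgroup $P_{i+1}(G)$ contains an open subgroup and is therefore open. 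Next, given $U\trianglelefteq_o G$, the quotient $G/U$ is a finite $p$-group, and by (a) we have $P_i(G)U/U=P_i(G/U)$; since the lower $p$-series of a finite $p$-group $H$ strictly decreases until it reaches $1$ (each $P_j(H)/P_{j+1}(H)$ is elementary abelian, and is nontrivial whenever $P_j(H)\neq 1$ by nilpotency of $H$), there is $i$ with $P_i(G/U)=1$, i.e.\ $P_i(G)\subseteq U$. As every open neighborhood of $1$ in $G$ contains some $U\trianglelefteq_o G$, the open subgroups $P_i(G)$ form a basis of open neighborhoods of $1$, and in particular $\bigcap_i P_i(G)=1$.

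No step here is genuinely hard; the proposition is essentially an unwinding of the definitions. The two places where something nontrivial enters are the interchange $\pi(\overline A)=\overline{\pi(A)}$ in (a), which uses compactness of profinite groups and where a naive argument with abstract subgroups would give only one inclusion, and the passage, in (b), from ``the $P_i(G)$ exhaust the open normal subgroups from below'' to ``they form a neighborhood basis'', which requires knowing in advance that each $P_i(G)$ is open --- hence the initial induction based on Corollary~\ref{cor:pro_p_frattini_open}.
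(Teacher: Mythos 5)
Your proof is correct and follows essentially the same route as the paper: induction on $i$ for (a), using that the canonical projection $\pi\colon G\to G/K$ is a closed map so that closures commute with $\pi$; and for (b), an induction on $i$ to show each $P_i(G)$ is open (via Corollary~\ref{cor:pro_p_frattini_open} and Proposition~\ref{prop:fin_gen_fin_subgps}) combined with (a) and the termination of the lower $p$-series of the finite quotient $G/U$. If anything, your formulation of (a) is slightly more explicit than the paper's, which passes to the preimage $M$ of $P_{n+1}(G/K)$ and asserts $M=G_{n+1}K$ without spelling out that the reverse inclusion again rests on $\pi$ being a closed map; your direct use of $\pi(\overline A)=\overline{\pi(A)}$ makes that step transparent.
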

\begin{proof}
	(a) Fix $K \trianglelefteq_c G$. We argue by induction on $i$. Note that $P_1(G/K) = G_1K/K$.
		Assume $P_n(G/K) = G_nK/K$ holds for some $n$. Let $M$ be the preimage of $P_{n+1}(G/K)$ under the natural projection
		$G \rightarrow G/K$. Then $M$ is a closed subgroup of $G$, $M/K = P_{n+1}(G/K)$ and $M \geq G_n^p[G_n,G]K$.
		In particular $M \geq G_{n+1}K$ and therefore $M = G_{n+1}K$.
		
	(b) We argue by induction. $G_1 = G$ is finitely generated and open in $G$. Suppose $G_n$ is finitely generated and open in $G$.
		Then $\Phi(G_n)$ is open in $G_n$ and thus $G_{n+1} \geq \Phi(G_n)$ is open in $G_n$ and hence finitely generated by Proposition~\ref{prop:fin_gen_fin_subgps}.
		
		Now let $N \trianglelefteq_o G$ be an open normal subgroup. Then $G/N$ is a finite $p$-group and therefore $P_i(G/N) = 1$
		for all sufficiently large $i$. Then $P_i(G) \leq N$ for all sufficiently large $i$ by (a).
\end{proof}

If $G$ is a finitely generated pro-$p$ group, we can simplify the definition of the Frattini subgroup (see Theorem 1.17 of \cite{dixon}) using the fact that $[G,G]$ is closed in $G$ (see Proposition 1.19 of \cite{dixon}).

\begin{proposition} \label{prop:pro_p_without_bars}
	Suppose $G$ is a finitely generated pro-$p$ group. Then $\Phi(G) = G^p[G,G]$.
\end{proposition}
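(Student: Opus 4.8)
The plan is to upgrade the identity $\Phi(G) = \overline{G^p[G,G]}$ from Lemma~\ref{lem:pro_p_frattini}(c) by showing that the subgroup $G^p[G,G]$ is \emph{already} closed, so that taking the closure changes nothing. The subtlety is that $G^p[G,G]$ is generated by the infinitely many $p$-th powers of $G$ together with $[G,G]$, and such a subgroup need not obviously be closed; the remedy is to pass to the abelian quotient $G/[G,G]$, where the set of $p$-th powers \emph{is} a subgroup.

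Concretely, first invoke the cited fact (Proposition 1.19 of \cite{dixon}) that $[G,G]$ is closed in the finitely generated pro-$p$ group $G$, and set $\bar G := G/[G,G]$ with canonical projection $\pi : G \to \bar G$; since $[G,G]$ is closed, $\bar G$ is an abelian pro-$p$ group and $\pi$ is continuous. Next consider the $p$-power map $\mu : \bar G \to \bar G$, $x \mapsto x^p$: as $\bar G$ is abelian this is a continuous group homomorphism, so $\mu(\bar G) = \bar G^p$ is a subgroup, and being a continuous image of the compact space $\bar G$ it is compact, hence closed in the Hausdorff group $\bar G$. Because $[G,G] \leq G^p[G,G]$, one checks that $\pi^{-1}(\bar G^p) = G^p[G,G]$ (an element $g$ lies in $G^p[G,G]$ if and only if $g[G,G]$ is a $p$-th power in $\bar G$). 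Thus $G^p[G,G]$ is the preimage of a closed set under a continuous map, hence closed in $G$, and therefore $\Phi(G) = \overline{G^p[G,G]} = G^p[G,G]$.

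The only non-routine ingredient is the closedness of $[G,G]$ in a finitely generated pro-$p$ group, which is precisely the external input we are permitted to use; the rest is a two-line topological argument. If one prefers to avoid the compactness step, one can instead quote the structure theorem for finitely generated abelian pro-$p$ groups, $\bar G \cong \Z_p^{d} \times F$ with $F$ finite, from which $\bar G^p$ is visibly open, and a fortiori closed, in $\bar G$.
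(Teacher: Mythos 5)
Your proof is correct and takes essentially the same approach as the paper's: both reduce the statement to showing that $G^p[G,G]$ is already closed, using the closedness of $[G,G]$ together with the observation that abelianness of $G/[G,G]$ lets one replace the subgroup generated by $p$-th powers with the compact \emph{set} of $p$-th powers (modulo $[G,G]$). The paper phrases this directly in $G$ via $G^p[G,G]=G^{\{p\}}[G,G]$ with $G^{\{p\}}=\{g^p:g\in G\}$ compact, while you pass to the quotient $\bar G = G/[G,G]$ and pull back the closed subgroup $\bar G^p$; this is a cosmetic rearrangement of the same argument.
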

\begin{proof}
	Set $G^{\{p\} } = \{ g^p : g \in G \}$. Note that $G^{\{p\}}$ is a closed subset of $G$ as it is the image of the continuous map $x \mapsto x^p$. As $G/[G,G]$ is abelian, we obtain $G^p[G,G] = G^{\{p\}}[G,G]$.
\end{proof}

\subsection{Subgroups of finite index}
Nikolov and Segal proved in \cite{nikolov1} and \cite{nikolov2} that every finite index subgroup of a finitely generated profinite group is open. For our purposes we can do with a much weaker result. We will only prove Anderson's theorem that this holds for finitely generated pronilpotent groups. We start with Serre's Theorem, which is the first step:

\begin{theorem}[Theorem 1.17 of \cite{dixon}] \label{thm:serre}
	Let $G$ be a finitely generated pro-$p$ group. Then every subgroup of finite index is open in $G$.
\end{theorem}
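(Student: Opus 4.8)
The plan is to prove that $H$ contains an open subgroup of $G$; since a subgroup containing an open subgroup is a union of cosets of it and hence open, this suffices. We argue by induction on the finite index $|G:H|$, the case $|G:H|=1$ being trivial. For the inductive step we aim to produce an \emph{open} subgroup $M$ with $H\trianglelefteq M\le G$, $|M:H|=p$ and $|G:M|<|G:H|$. Granting such an $M$: it is open in the finitely generated profinite group $G$, hence finitely generated by Proposition~\ref{prop:fin_gen_fin_subgps}, and, being closed in $G$, a pro-$p$ group; the quotient $M/H$ is cyclic of order $p$, so every $p$-th power and every commutator of elements of $M$ maps to $1$ in $M/H$, i.e.\ $M^p[M,M]\le H$; by Proposition~\ref{prop:pro_p_without_bars} this subgroup is exactly $\Phi(M)$, which is open in $M$ by Corollary~\ref{cor:pro_p_frattini_open} and therefore open in $G$. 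Thus $H$ contains an open subgroup, and since $|G:M|=|G:H|/p<|G:H|$ the induction closes.

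To construct $M$, set $N=\Core_G(H)=\bigcap_{g\in G}H^g$; as $H$ has finite index, so does its normalizer, so $N$ is an intersection of finitely many conjugates of $H$ and hence a normal subgroup of $G$ of finite index. The one place where the order machinery enters is the claim that $G/N$ is a finite $p$-group: every open normal subgroup $U$ of $G$ has $p$-power index (the quotient $G/U$ being a finite $p$-group), so $|G|=\lcm\{|G:U|:U\trianglelefteq_o G\}$ is a power of $p$, and then $|G:N|$ divides $|G|$ by Proposition~\ref{prop:finite_index_divides}. Now $H/N$ is a proper subgroup of the finite $p$-group $G/N$, so it is properly contained in its normalizer there; picking inside that normalizer a subgroup of order $p$ over $H/N$ (using that a proper subgroup of a finite $p$-group lies, with index $p$, in a subgroup normalizing it) yields a subgroup $M/N$ with $H/N\trianglelefteq M/N$ and $|M/N:H/N|=p$. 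Pulling back along $G\to G/N$ gives a subgroup $M$ with $H\trianglelefteq M\le G$ and $|M:H|=p$, and $M$ is open by the induction hypothesis applied to the subgroup $M\le G$ of strictly smaller index.

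The only genuine obstacle is this last point: a priori $H$ need not be closed, so the quotient map $G\to G/H$ need not be continuous and one cannot argue directly that $G/\Core_G(H)$ is a $p$-group. Proposition~\ref{prop:finite_index_divides} --- divisibility of a finite index into the supernatural order of $G$ --- is precisely what circumvents this; everything else is a reduction to the already-established fact that for a finitely generated pro-$p$ group $M$ the Frattini subgroup $\Phi(M)=M^p[M,M]$ is open.
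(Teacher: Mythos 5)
Your argument is correct, and it reaches the goal by a somewhat different route than the paper. Both proofs run an induction on the finite index, reduce to the normal case via the core, and hinge on the same key fact that $\Phi(K)=K^p[K,K]$ is open in a finitely generated pro-$p$ group $K$ --- but the role of the intermediate group $M$ is organized the other way around. The paper takes $M=\Phi(G)N$ for $N=\Core_G(H)$: this $M$ is open \emph{immediately} (it contains $\Phi(G)$), and the paper then invokes the induction hypothesis with $M$ in the role of the \emph{ambient} group and $N$ as the finite-index normal subgroup, so that $N\leq_o M\leq_o G$ gives $N\leq_o G$ by transitivity. You instead go down one step at a time: a normalizer argument inside the finite $p$-group $G/N$ yields $M$ with $H\trianglelefteq M$ and $|M:H|=p$, the induction hypothesis (applied to $M$ as a subgroup of $G$ of strictly smaller index) shows $M$ is open, and then $M^p[M,M]=\Phi(M)\leq H$ finishes the job. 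A small bonus of your write-up is that you make explicit the justification that $G/\Core_G(H)$ is a $p$-group, via Proposition~\ref{prop:finite_index_divides} and the fact that $|G|$ is a $p$-power; the paper uses this fact but states it without comment. The paper's choice of $M$ is a little slicker in that openness of $M$ comes for free and no further $p$-group combinatorics is needed; your version is closer to the classical finite-group induction and perhaps more transparent about why each step works.
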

\begin{proof}
	By Corollary~\ref{cor:pro_p_frattini_open} a pro-$p$ group is finitely generated if and only if its Frattini-subgroup is open. Combining this fact with Proposition~\ref{prop:pro_p_without_bars}, we have
	\[ G^p [G,G] = \Phi(G) \trianglelefteq_o G. \]
	It clearly suffices to prove the theorem for normal subgroups  $N \triangleleft G$
	of finite index. We argue by induction on $|G:N|$. Hence $N$ is open in $M$ whenever $M$ is a finitely generated pro-$p$ group such that $N \leq M < G$.
	
	Set $M = G^p [G,G] N$, then $G/N$ is a finite nontrivial $p$-group. We have $M/N = \Phi(G/N) < G$ and hence $M < G$ is a proper subgroup. Moreover, $M$ is an open subgroup of $G$ as it contains $\Phi(G)$. Therefore it is finitely generated by Corollary~\ref{cor:pro_p_frattini_open} and we can apply the induction hypothesis. Now $N \leq_o M$ and $M \leq_o G$, and thus $N \leq_o G$.
\end{proof}

To extend this result to pronilpotent groups, the following observation is crucial.

\begin{proposition}[Proposition 7 of \cite{anderson}]
	Let $G$ be a prosolvable group and let $N \trianglelefteq G$ be a normal subgroup of finite index. Then $G/N$ is solvable.
\end{proposition}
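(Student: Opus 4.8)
\emph{Proof plan.}

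The plan is to reduce everything to P.\ Hall's theorem, which says that a finite group is solvable as soon as it has a Hall $p'$-subgroup for every prime $p$. So write $n=|G:N|$ (a natural number) and $K=G/N$; it is enough to exhibit, for each prime $p$ dividing $n$, a subgroup of $K$ whose order is the $p'$-part of $n$.

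First I would produce Hall subgroups inside $G$ itself. Since $G$ is prosolvable it is the inverse limit of a surjective system of finite solvable groups, and finite solvable groups satisfy conditions (i)--(iii) above by P.\ Hall's theorems; hence, by the theorem on Hall subgroups of inverse limits quoted above, for every prime $p$ the group $G$ has a closed Hall $p'$-subgroup $H\leq_c G$, and by Corollary~\ref{cor:p_sylow} a closed $p$-Sylow subgroup $P\leq_c G$. The next step is to check that $G=HP$: the set $HP$ is the continuous image of the compact set $H\times P$, hence compact and closed, and for every $U\trianglelefteq_o G$ the images of $H$ and $P$ in the finite group $G/U$ are a Hall $p'$-subgroup and a $p$-Sylow subgroup by Lemma~\ref{l:Hall}, so their product is all of $G/U$ for order reasons; thus $HPU=G$ for all such $U$, and $\overline{HP}=G$ by Proposition~\ref{prop:limit_closure}, whence $HP=G$ since $HP$ is closed.

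Finally I would transport this to $K=G/N$. Put $\bar H=HN/N$ and $\bar P=PN/N$; then $K=\bar H\bar P$ because $G=HP$. Now $H\cap N$ has finite index in $H$, so by Proposition~\ref{prop:finite_index_divides} the integer $|\bar H|=|H:H\cap N|$ divides the supernatural order $|H|$, which is a $p'$-number, whence $|\bar H|$ is prime to $p$; symmetrically $|\bar P|$ is a power of $p$. Since $\bar H\cap\bar P$ is at once a $p$-group and a $p'$-group it is trivial, so $n=|K|=|\bar H|\,|\bar P|$, and comparing $p$-parts forces $|\bar H|$ to be exactly the $p'$-part of $n$. Hence $\bar H$ is a Hall $p'$-subgroup of $K$, and letting $p$ range over the primes dividing $n$ we obtain all the Hall subgroups P.\ Hall's theorem requires; therefore $K=G/N$ is solvable.

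I expect the last step to be the real obstacle: the hypothesis gives only that $N$ has finite index, so $N$ need not be open, and in fact may not even be closed, in which case $G/N$ carries no useful topology and one cannot simply read off its Sylow structure from finite quotients of $G$. The device that gets around this is to do all of the Hall-subgroup bookkeeping inside $G$, where existence of Hall subgroups and divisibility of finite indices into the order (Proposition~\ref{prop:finite_index_divides}) are available, and only at the very end to intersect with $N$, using that proposition to control the indices $|H:H\cap N|$ and $|P:P\cap N|$.
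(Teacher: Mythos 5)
Your proof is correct, and it follows the same overall strategy as the paper: invoke P.\ Hall's theorem, produce a closed $p'$-Hall subgroup $H\leq_c G$ via the prosolvable Hall theorem, and show that $HN/N$ is a $p'$-Hall subgroup of the finite group $G/N$ with the help of Proposition~\ref{prop:finite_index_divides}. The execution differs in one place. The paper writes $|G:N|=|G:HN|\cdot|HN:N|$, notes $|HN:N|=|H:H\cap N|$ divides the $p'$-number $|H|$, and then asserts that $|G:HN|$ divides $|G:H|$ and is therefore a power of~$p$. You instead bring in a $p$-Sylow subgroup $P\leq_c G$, prove the product decomposition $G=HP$ by a standard closure/quotient argument, and deduce the Hall property from the factorization $n=|HN/N|\cdot|PN/N|$ together with the disjointness of a $p$-group and a $p'$-group. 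Your route is a little longer (you need the extra Sylow subgroup and the $G=HP$ step), but it has the virtue of only ever invoking Proposition~\ref{prop:finite_index_divides} for finite-index subgroups of the profinite groups $H$ and $P$ themselves (namely $H\cap N$ and $P\cap N$); you never have to reason about the supernatural index of the not-necessarily-closed subgroup $HN$ in $G$, which is the step the paper treats somewhat loosely. Both proofs are valid; yours is essentially an expanded and slightly more cautious version of the same argument.
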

\begin{proof}
	By a theorem of Hall (see, for example, \cite[Theorem 5.29]{rotman}) a finite group is solvable if it contains $p'$-Hall subgroups for all primes $p$.
	
	Let $p$ be a prime number. By  Corollary~\ref{cor:pro_solvable} there is a $p'$-Hall subgroup
	$H \leq_c G$ of $G$. We will show that $HN/N$ is a $p'$-Hall subgroup of $G/N$.
	We have
	\[ |G:N| = |G:HN| \cdot |HN:N|. \]
	Proposition~\ref{prop:finite_index_divides} implies that $|G:HN|$ divides $|G:H|$ and thus is a power of $p$. On the other hand $HN/N \cong H/(H \cap N)$.
	Hence $|HN:N|$ divides $|H|$ by Proposition~\ref{prop:finite_index_divides}. Therefore $p$ does not divide $|HN:N|$, and hence $HN/N$ is a $p'$-Hall subgroup of $G/N$.
\end{proof}

\begin{theorem}[Corollary following Theorem 3 of \cite{anderson}] \label{anderson}
	Let $G$ be a finitely generated pronilpotent group. Then every subgroup of finite index is open.
\end{theorem}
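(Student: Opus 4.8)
The plan is to split $G$ into its Sylow subgroups, discard the primes not dividing the index of the given subgroup, and then invoke Serre's theorem (Theorem~\ref{thm:serre}) factor by factor.

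First I would reduce to a normal subgroup $N \trianglelefteq G$ of finite index, exactly as in the proof of Theorem~\ref{thm:serre}: any finite index subgroup contains its core, which is normal of finite index, and openness of the core forces openness of the subgroup. By Corollary~\ref{cor:pro_nilpotent} one may write $G = \prod_p P_p$, the direct product of its Sylow subgroups; since the projection $G \to P_p$ is a continuous surjection, each $P_p$ is a finitely generated pro-$p$ group, so Theorem~\ref{thm:serre} applies to every $P_p$.

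Next, let $\pi$ be the finite set of primes dividing $[G:N]$ and put $B = \prod_{p \notin \pi} P_p$, a closed normal subgroup of $G$. The crucial step is to show $B \leq N$. Since $B \cap N$ has finite index in $B$, the index $[B : B\cap N]$ divides $|B|$ by Proposition~\ref{prop:finite_index_divides}, and $|B|$ is a $\pi'$-number (a product of pro-$p$ groups with $p \notin \pi$); but $BN/N \cong B/(B\cap N)$ shows that $[BN:N] = [B : B\cap N]$ also divides $[G:N]$, all of whose prime factors lie in $\pi$, forcing $[BN:N]=1$. Now $N$ is open in $G$ if and only if $N/B$ is open in $G/B$, because $N$ is the full preimage of $N/B$ under the (continuous, open) quotient map; and $G/B \cong \prod_{p\in\pi} P_p$ involves only finitely many primes. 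In this quotient, $(N/B) \cap P_p$ has finite index in $P_p$ for each $p \in \pi$, hence is open in $P_p$ by Theorem~\ref{thm:serre}; therefore the finite product $D = \prod_{p\in\pi}\bigl((N/B)\cap P_p\bigr)$ is an open subgroup of $G/B$, and since $\pi$ is finite and the factors $P_p$ commute, every element of $D$ is a finite product of elements of the groups $(N/B)\cap P_p \subseteq N/B$, so $D \leq N/B$. Hence $N/B$ contains an open subgroup and is open, and so is $N$.

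The step I expect to be the main obstacle is precisely the passage to $G/B$. One cannot directly intersect $N$ with all the $P_p$ at once: over infinitely many primes the subgroup $\prod_p (N\cap P_p)$ need not be contained in $N$ (which is not assumed closed) and need not be open anyway. The divisibility argument establishing $B \leq N$ is what lets one throw away the infinitely many irrelevant primes and reduce to the finite-prime case, where the product-of-open-subgroups argument goes through without difficulty.
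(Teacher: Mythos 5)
Your proof is correct and follows essentially the same strategy as the paper's: reduce to the normal case via the core, decompose $G$ into the direct product of its Sylow subgroups (Corollary~\ref{cor:pro_nilpotent}), use the index-divisibility result (Proposition~\ref{prop:finite_index_divides}) to push the Sylow subgroups for primes not dividing $[G:N]$ into $N$, and finish with Serre's theorem (Theorem~\ref{thm:serre}). The one structural difference is that the paper first reduces to the case where $[G:N]$ is a single prime, citing the fact that a finite quotient of a prosolvable group is solvable to obtain a subnormal series with quotients of prime order, and then has to handle only the one relevant Sylow factor $S_p$; you instead retain the whole finite set $\pi$ of primes dividing $[G:N]$ and handle the finitely many relevant factors simultaneously by passing to $G/B$ and taking a product of the open subgroups $(N/B)\cap P_p$. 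Your version sidesteps the preliminary solvability reduction (and the proposition it relies on) at the modest cost of an explicit finite-product-of-open-subgroups argument, and also gets finite generation of each $P_p$ a bit more directly, as a continuous quotient of $G$, rather than via counting index-$p$ subgroups and Corollary~\ref{cor:pro_p_frattini_open}. Both routes are sound; the content is the same.
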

\begin{proof}
	Fix  a  subgroup $N \leq G$ of finite index. We may assume that $N$ is normal in $G$ 
	by replacing $N$
	by $\Core _G(N)=\bigcap_{g\in G} N^g$. Since  
	$G$ is pronilpotent, it is prosolvable and so $G/N$ is a finite solvable group. Therefore $G/N$ admits a subnormal series with cyclic quotients of prime order. Hence we may assume that $p = |G:N|$ is prime.
	
	Let $q \neq p$ be a different prime and let $S_q$ be the unique $q$-Sylow subgroup of $G$.
	Then $|S_q : S_q \cap N|$ divides $|S_q|$ by Proposition~\ref{prop:finite_index_divides} and as
	\[ S_q / (S_q \cap N) \cong S_q N / N \leq G /N, \]
	we see that $|S_q : S_q \cap N|$ divides $p$. Therefore $|S_q : S_q \cap N| = 1$ and thus $S_q \leq N$.
	
	As $G$ is the product of its Sylow subgroups, it is enough to show that $S_p \cap N$ is open in $S_p$. As $G$ is finitely generated, $S_p$ has only finitely many open subgroups of index $p$. Therefore $\Phi(S_p)$ is open in $S_p$ and hence
	$S_p$ is finitely generated by  Corollary~\ref{cor:pro_p_frattini_open}. Now use 
	Theorem~\ref{thm:serre} to see that $S_p \cap N$ is an open subgroup of $S_p$.
\end{proof}

\subsection{Pro-Fitting subgroup}
In a finite group the Fitting subgroup is the maximal normal nilpotent subgroup. We will see that the pro-Fitting subgroup of a profinite group  is the maximal normal pronilpotent subgroup. We can define it as follows:

\begin{definition}[Definition 1.3.9 of \cite{reid}] Let $G$ be a profinite group.
	\begin{itemize}
	\item[(a)] The pro-Fitting subgroup $F(G)$ of $G$ is the closed subgroup generated by all subnormal pro-$p$ subgroups of $G$ where $p$ runs over all primes.
	\item[(b)] If $\pi$ is a set of primes then the $\pi$-core $O_\pi(G)$ of $G$ is the closed subgroup generated by all
		subnormal pro-$\pi$ subgroups of $G$.
	\end{itemize}
\end{definition}

Obviously, both $F(G)$ and $O_\pi(G)$ are topologically characteristic subgroups of $G$. The $\pi$-core of a finite groups is the unique maximal normal $\pi$-subgroup.

\begin{lemma}[Lemma 1.3.10 of \cite{reid}]
	Let $G$ be a profinite group and let $\pi$ be a set of primes. Let $R_N$ be the preimage of $O_\pi(G/N)$ under the canonical projection $G \rightarrow G/N$ for all $N \trianglelefteq_o G$. Then $O_\pi(G) = \bigcap_{N \trianglelefteq_o G} R_N$ and in particular $O_\pi(G)$ is a $\pi$-group.
\end{lemma}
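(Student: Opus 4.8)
The plan is to prove both inclusions $O_\pi(G) \subseteq \bigcap_{N \trianglelefteq_o G} R_N$ and $\bigcap_{N \trianglelefteq_o G} R_N \subseteq O_\pi(G)$, and then observe that the intersection is a $\pi$-group. For the first inclusion, I would start from the definition: $O_\pi(G)$ is topologically generated by the subnormal pro-$\pi$ subgroups of $G$. It therefore suffices to show that every such subgroup is contained in every $R_N$. So fix a subnormal pro-$\pi$ subgroup $H \leq_c G$ and an open normal subgroup $N \trianglelefteq_o G$. The image $HN/N$ is subnormal in the finite group $G/N$ (subnormality is preserved under quotients) and is a $\pi$-group (continuous homomorphic images of pro-$\pi$ groups are pro-$\pi$, here finite, so a $\pi$-group), hence $HN/N \leq O_\pi(G/N) = R_N/N$ by the finite-group fact that $O_\pi$ of a finite group contains every subnormal $\pi$-subgroup. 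Thus $H \leq R_N$, and since this holds for all such $H$ and $R_N$ is closed, $O_\pi(G) \subseteq R_N$ for each $N$, giving one inclusion.

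For the reverse inclusion, set $R = \bigcap_{N \trianglelefteq_o G} R_N$. Each $R_N$ is a closed (indeed open, being the preimage of a subgroup of the finite group $G/N$) normal subgroup of $G$, so $R$ is a closed normal subgroup. The key point is that $R$ is pro-$\pi$: for any $N \trianglelefteq_o G$, the image $RN/N \leq R_N/N = O_\pi(G/N)$ is a $\pi$-group, so $|RN/N|$ is a $\pi$-number; since $|R| = \lcm\{\,|RN/N : N \cap R| \cdot \text{(trivial)}\,\}$— more precisely $|R| = \lcm\{|RN/N| : N \trianglelefteq_o G\}$ using that the $N$ cofinal in the $N \cap R$— this makes $|R|$ a $\pi$-number, i.e.\ $R$ is pro-$\pi$. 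Being a closed normal pro-$\pi$ subgroup, $R$ is in particular a subnormal pro-$\pi$ subgroup of $G$, hence $R \subseteq O_\pi(G)$ by definition of the $\pi$-core. Combining the two inclusions gives $O_\pi(G) = R$, and the displayed computation of $|R|$ already shows $O_\pi(G)$ is a $\pi$-group, which is the ``in particular'' clause.

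The main obstacle I anticipate is the bookkeeping in showing $|R|$ is a $\pi$-number, i.e.\ correctly handling the supernatural-number index of the closed subgroup $R$ in terms of the finite quotients $G/N$. One must be careful that $|R| = \lcm\{|G/N : RN/N| \text{ complement}\}$ is not literally the right formula; instead one uses $|R| = \lcm\{|RN/N| : N \trianglelefteq_o G\}$ via the fact that $\{N \cap R : N \trianglelefteq_o G\}$ is a neighborhood basis of $1$ in $R$ together with $R/(N \cap R) \cong RN/N$. Each such quotient embeds in $O_\pi(G/N)$ and is thus a finite $\pi$-group, so every prime dividing $|R|$ lies in $\pi$. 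The only other point requiring a word of care is that a closed normal subgroup is automatically subnormal (it sits in the length-one subnormal series $R \trianglelefteq G$), so it genuinely qualifies as one of the generators of $O_\pi(G)$; this is immediate but worth stating.
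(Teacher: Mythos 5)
Your proof is correct and follows essentially the same route as the paper's: show the intersection $R$ is a closed normal pro-$\pi$ subgroup (hence among the generators of $O_\pi(G)$) by computing the finite quotients $RN/N \le O_\pi(G/N)$, and get the forward inclusion by pushing subnormal pro-$\pi$ subgroups down to the finite quotients where the classical $O_\pi$ catches them. The paper compresses the forward direction to a single line, and you elaborate the supernatural-number bookkeeping for the pro-$\pi$ claim more carefully than it does, but there is no difference in substance.
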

\begin{proof}
	Note that $R$ is a pro-$\pi$ group as $RN/N \leq R_N/N = O_\pi(G/N)$ is a $\pi$-group for all $N \trianglelefteq_o G$.
	As $O_\pi(G)$ is generated by all subnormal pro-$\pi$ groups we have $R \leq O_\pi(G)$.
	
	On the other hand , $O_\pi(G)N/N$ is generated by subnormal $\pi$-groups and 
	therefore $O_\pi(G)N/N \leq R_N$ for all $N \trianglelefteq_o G$ and t
	herefore $O_\pi(G) \leq R$ by Proposition~\ref{prop:prof_closed_subset}.
\end{proof}

If $p$ and $q$ are different primes, then $O_p(G) \cap O_q(G) = 1$ by the above lemma. Therefore $F(G)$ is the direct product of the $O_p(G)$ and it follows easily from  Corollary~\ref{cor:pro_nilpotent} that $F(G)$ is pronilpotent. As each $O_p(G)$ is contained in $F(G)$, $F(G)$ must be the maximal normal pronilpotent subgroup.
Moreover, by Theorem 5.4.4 and Corollary 5.4.5 of \cite{reid} the following holds. 

\begin{theorem} \label{reid}
	Let $G$ be a profinite group of finite rank. Then there is an open normal subgroup $A \trianglelefteq_o G$, $F(G) \leq A \leq G$, such that $A/F(G)$ is finitely generated abelian.
\end{theorem}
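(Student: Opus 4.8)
The plan is to reduce the statement to the claim that $\overline{G}:=G/F(G)$ has an open normal subgroup that is finitely generated abelian. Indeed, if $\overline{A}\trianglelefteq_o\overline{G}$ is such a subgroup and $A$ is its preimage under $G\to\overline{G}$, then $A\trianglelefteq_o G$, $F(G)\le A$, and $A/F(G)\cong\overline{A}$; note also that since $F(G)$ is the maximal closed normal pronilpotent subgroup of $G$ one automatically gets $F(A)=F(G)$, so $A$ is as required. Because continuous images do not increase the rank (Proposition~\ref{prop:gen_set}(b)), $\overline{G}$ again has finite rank, hence is topologically finitely generated; so in fact it is enough to find an \emph{abelian} open normal subgroup of $\overline{G}$, and finite generation will then be automatic.

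To do this I would first reduce to the case where $\overline{G}$, equivalently $G$, is prosolvable. Every infinite profinite group has a proper open normal subgroup and so is not topologically simple; hence every component of a profinite group (a quasisimple closed subnormal subgroup) is a perfect central extension of a finite simple group, and so is finite by Schur's theorem, and finiteness of $\rk(G)$ bounds the number of components. Thus, after replacing $G$ by a suitable open normal subgroup, we may assume $G$ is prosolvable. Now comes the main point, for which finite rank is essential. Write $F(G)=\prod_q O_q(G)$ as in the excerpt; fix a prime $p$, let $T$ be a Sylow pro-$p$ subgroup of $G$, and consider the conjugation action of $T$ on the closed characteristic subgroup $\prod_{q\ne p}O_q(G)$. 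For $q\ne p$, the Sylow subgroup $O_q(G)$ is a compact $q$-adic analytic pro-$q$ group of rank at most $\rk(G)$ (Theorem~\ref{t:finite_rank}); since an automorphism of a pro-$q$ group of $p$-power order ($p\ne q$) that is trivial on the Frattini quotient is trivial, the image of $T$ in $\Aut(O_q(G))$ is a finite $p$-subgroup $P_q$ of $\mathrm{GL}_d(\F_q)$ with $d\le\rk(G)$. By Jordan's theorem (in the coprime characteristic version, $|P_q|$ being prime to $q$), $P_q$ has an abelian normal subgroup of index at most $J(d)$, where $J(d)$ depends only on $d$. Pulling these back and using that $T$, being finitely generated (finite rank), has only finitely many open subgroups of index at most $J(\rk G)$ (Proposition~\ref{prop:fin_gen_fin_subgps}), we obtain an open normal subgroup $N\trianglelefteq T$ whose image in each $\Aut(O_q(G))$, $q\ne p$, is abelian; hence $\overline{[N,N]}$ centralizes $\prod_{q\ne p}O_q(G)$, and then (using Corollary~\ref{cor:pro_nilpotent} and the Schur--Zassenhaus theorem, Proposition~\ref{schur}, to recognise the resulting product of pro-$q$ groups as pronilpotent) one shows that the image of $N$ in the $p$-Sylow of $\overline{G}$ has abelian commutator subgroup, i.e. this $p$-Sylow is virtually abelian with index bounded in terms of $\rk(G)$. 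Assembling this information over all primes $p$ — again invoking finiteness of the rank to collapse an a priori infinite construction to a finite one — yields the required finitely generated abelian open normal subgroup of $\overline{G}$.

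The real obstacles, beyond quoting the $p$-adic analytic structure theory of Lazard, Lubotzky--Mann and du Sautoy that underlies Theorem~\ref{t:finite_rank}, are the bookkeeping issues hidden in the last two sentences: one must track normality carefully when passing between $G$, its Sylow subgroups, and the quotient $\overline{G}$; one must make the ``absorption into $F(G)$'' (equivalently, the passage to the virtually abelian $p$-Sylow of $\overline{G}$) uniform in $q$ and then compatible across all primes $p$; and, crucially, one must use $\rk(G)<\infty$ to ensure that the abelian-by-finite object one finally extracts is genuinely finitely generated, rather than an infinite restricted product of finite cyclic groups. I expect this last, innocuous-looking point to absorb most of the effort.
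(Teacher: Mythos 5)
The paper does not actually prove Theorem~\ref{reid}; it is cited directly from Theorem 5.4.4 and Corollary 5.4.5 of \cite{reid}, so there is no internal proof to compare against. Judged on its own, your strategy (linearize the conjugation action on the Sylow factors of $F(G)$ via their Frattini quotients, invoke a Jordan-type bound in coprime characteristic, intersect the resulting bounded-index subgroups using $\rk(G)<\infty$) is indeed in the spirit of the Lubotzky--Mann and Dixon--du~Sautoy--Mann--Segal structure theory, and several individual steps (coprime automorphisms faithful on the Frattini quotient, $d(O_q(G))\le\rk(G)$, the use of Proposition~\ref{prop:fin_gen_fin_subgps} to turn bounded-index data into a single open subgroup) are correct as stated.

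However, there are genuine gaps beyond bookkeeping. First, the reduction to the prosolvable case is not established by the argument you give: showing the components are finite and at most $\rk(G)$ in number controls $E(G)$, but says nothing yet about $G/F^*(G)$, and obtaining an open prosolvable normal subgroup from bounded rank is itself a nontrivial theorem about finite groups of bounded rank, not a corollary of Schur's theorem. Second, and more seriously, the action on $O_p(G)$ is never controlled. You show $\overline{[N,N]}$ centralizes $\prod_{q\ne p}O_q(G)$, but to conclude anything about the image of $N$ in $\overline G=G/F(G)$ you need $\overline{[N,N]}$ (or some bounded commutator of it) to land inside $F(G)\cap T=O_p(G)$; centralizing only the $p'$-part of $F(G)$ places $\overline{[N,N]}$ in $C_G\bigl(\prod_{q\ne p}O_q(G)\bigr)$, a normal subgroup that is in general much larger than $F(G)$, and since $N$ and $[N,N]$ are normal only in $T$ and not subnormal in $G$, you cannot invoke ``pronilpotent normal $\Rightarrow$ contained in $F(G)$'' for the product you build. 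The clause ``has abelian commutator subgroup, i.e.\ this $p$-Sylow is virtually abelian'' also conflates metabelian with abelian-by-finite, and the final assembly of the per-prime information into a single open normal abelian subgroup of $\overline G$ is asserted rather than argued. Each of these needs a new idea, not just care; in particular the pro-$p$ action on $O_p(G)$, which Jordan's theorem cannot see, is the crux of the matter.
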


\subsection{Automorphisms of profinite groups}
For a profinite group $G$ we want to consider the group $\Aut(G)$ of continuous automorphisms of $G$
again as a profinite group. 
If $\gamma$ is a continuous automorphism of $G$ and $g\in G$ is an element, we write $g^\gamma = \gamma(g)$ and $[g,\gamma] = g^{-1}g^\gamma$.
Given an open normal subgroup $N \trianglelefteq_o G$ of $G$, we consider
\[ \Gamma(N) = \{ \gamma \in \Aut(G) : [G,\gamma] \subseteq N \}. \]
Note that a continuous automorphism $\gamma$ of $G$ is in $\Gamma(N)$ if and only if it leaves $N$ invariant and acts trivially on $G/N$.
We view $\Aut(G)$ as a topological group, where the family $\{ \Gamma(N) : N \trianglelefteq_o G \}$
is a neighborhood basis of the identity.

\begin{theorem}[Proposition 4.4.3 of \cite{ribes}]
	Suppose $G$ is a profinite group and $\mathcal{U}_c$ is a  neighbourhood basis of the identity consisting of open characteristic subgroups.
	Then $\Aut(G)$ is profinite.
\end{theorem}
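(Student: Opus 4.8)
The plan is to show that $\Aut(G)$, equipped with the topology whose identity neighbourhood basis is $\{\Gamma(N) : N \trianglelefteq_o G\}$, is Hausdorff, compact and totally disconnected, since these three properties characterize profinite groups. The hypothesis gives us a cofinal system $\mathcal{U}_c$ of open \emph{characteristic} subgroups, and the key point is that a characteristic subgroup is invariant under every continuous automorphism, so each $V \in \mathcal{U}_c$ gives rise to a natural restriction map $\rho_V : \Aut(G) \to \Aut(G/V)$, $\gamma \mapsto (gV \mapsto \gamma(g)V)$, which is a well-defined group homomorphism into the \emph{finite} group $\Aut(G/V)$.

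First I would check that $\{\Gamma(N) : N \trianglelefteq_o G\}$ genuinely defines a group topology: each $\Gamma(N)$ is a subgroup of $\Aut(G)$ (a routine computation with the cocycle identity $[G, \gamma\delta] \subseteq [G,\gamma]^\delta [G,\delta]$, and noting $\Gamma(N)$ is normalized appropriately), and the family is filtered because $\Gamma(N_1) \cap \Gamma(N_2) = \Gamma(N_1 \cap N_2)$. For Hausdorffness, observe that $\bigcap_{N \trianglelefteq_o G} \Gamma(N) = \{\mathrm{id}\}$: if $[G,\gamma] \subseteq N$ for all $N \trianglelefteq_o G$ then $\gamma(g)g^{-1} \in \bigcap_N N = \{1\}$ for every $g$ by Proposition~\ref{prop:prof_closed_subset}, so $\gamma = \mathrm{id}$. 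Total disconnectedness then follows since the identity has a neighbourhood basis of (open, hence clopen) subgroups.

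Next, for compactness I would build $\Aut(G)$ as an inverse limit. Using the cofinal characteristic system $\mathcal{U}_c$, form the inverse system of finite groups $\{\Aut(G/V) : V \in \mathcal{U}_c\}$ with the obvious connecting maps induced by $G/V \twoheadrightarrow G/W$ for $V \leq W$; one must check these connecting maps are well-defined, which again uses that the $V$'s are characteristic. The maps $\rho_V$ assemble into a continuous homomorphism $\rho : \Aut(G) \to \varprojlim_{V \in \mathcal{U}_c} \Aut(G/V)$. I would show $\rho$ is an isomorphism of topological groups: injectivity is exactly the Hausdorff computation above (since $\mathcal{U}_c$ is cofinal, $\bigcap_V V = 1$); for surjectivity, a compatible family $(\gamma_V)_V$ of automorphisms of the $G/V$ defines a continuous automorphism of $G = \varprojlim G/V$ by the universal property, using that inverses also glue; and $\rho$ is open onto its image because $\ker \rho_V$ is (contained in and cofinal with) some $\Gamma(N)$. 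Since $\varprojlim \Aut(G/V)$ is an inverse limit of finite groups, it is profinite, hence so is $\Aut(G)$.

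The main obstacle will be the surjectivity of $\rho$, i.e. verifying that a coherent family of automorphisms of the finite quotients really does lift to a \emph{continuous} automorphism of $G$ with continuous inverse — this requires care with the inverse limit description of $G$ relative to the system $\mathcal{U}_c$ (one needs $\mathcal{U}_c$ cofinal in all open normal subgroups, which holds since every open normal subgroup contains an open characteristic one from $\mathcal{U}_c$) and an application of compactness as in the proof of Proposition~\ref{prop:gen_set}(b). A secondary technical point is matching the given topology (via the $\Gamma(N)$) with the inverse-limit topology; this comes down to the observation that $\ker \rho_V = \Gamma(V)$ when $V$ is characteristic, together with cofinality of $\mathcal{U}_c$.
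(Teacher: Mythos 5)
Your proof is correct and takes essentially the same route as the paper: both exhibit $\Aut(G)$ as an inverse limit of finite groups via the restriction maps $\Aut(G)\to\Aut(G/V)$ for $V\in\mathcal{U}_c$, using that each $V$ is characteristic to make these maps well-defined, that $\bigcap_{V}\Gamma(V)=1$ for injectivity, and that $\ker\rho_V=\Gamma(V)$ together with cofinality of $\mathcal{U}_c$ to match the topologies. The only cosmetic difference is that the paper passes to the images $A_V=\omega_V(\Aut(G))\leq\Aut(G/V)$ rather than the full groups $\Aut(G/V)$; your explicit universal-property argument for surjectivity is precisely the content behind the paper's terse ``inducing an epimorphism.''
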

\begin{proof}
	Let $U \in \mathcal{U}_c$. Then $\Gamma(U)$ is the kernel of the natural homomorphism
	\[ \omega_U: \Aut(G) \rightarrow \Aut(G/U). \]
	Hence $\omega_U$ is continuous. Put $A_U = \omega_U(\Aut(G))$. Given $V \leq U$ in $\mathcal{U}_c$, there is a canonical map
	\[ \omega_{VU} : A_V \rightarrow A_U, \phi/V \mapsto \phi/U. \]
	The maps $\omega_{VU}$ are well-defined homomorphisms 
	inducing an epimorphism
	\[ \omega : \Aut(G) \twoheadrightarrow \varprojlim_{U \in \mathcal{U}_c} A_U. \]
	Now $\ker(\omega) = \bigcap_{U \in \mathcal{U}_c} \Gamma(U) = 1$. Hence $\omega$ is injective and therefore $\Aut(G) = \varprojlim_{U \in \mathcal{U}_c} A_U$ is profinite.
\end{proof}

\begin{corollary}
	Suppose $G$ is a finitely generated profinite group. Then $\Aut(G)$ is profinite.
\end{corollary}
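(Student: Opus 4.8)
The plan is to reduce the statement to the previous theorem by producing a neighbourhood basis of the identity consisting of open \emph{characteristic} subgroups, since that is exactly the hypothesis under which $\Aut(G)$ was shown to be profinite. So the only thing to verify is: if $G$ is a finitely generated profinite group, then the open characteristic subgroups of $G$ form a neighbourhood basis of the identity.

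First I would fix an arbitrary open subgroup $U \leq_o G$ and look for an open characteristic subgroup contained in $U$. Replacing $U$ by its core $\Core_G(U) = \bigcap_{g \in G} U^g$, which is still open since it has finite index and is a finite intersection of conjugates of $U$ (all of the same finite index), I may assume $U \trianglelefteq_o G$ is open \emph{normal} of some index $n = |G:U|$. Now I invoke Proposition~\ref{prop:fin_gen_fin_subgps}: a finitely generated profinite group has only finitely many open subgroups of any given finite index. Hence the set $\{U_1, \dots, U_k\}$ of \emph{all} open normal subgroups of $G$ of index dividing $n$ (equivalently, of index at most $n$ — either way a finite list) is finite. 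Their intersection $C = U_1 \cap \dots \cap U_k$ is then a finite intersection of open subgroups, hence open, and it is normal. Moreover $C$ is characteristic: any continuous automorphism $\alpha$ of $G$ permutes the open normal subgroups of $G$ of each given index, so it permutes the finite family $\{U_1,\dots,U_k\}$ and therefore fixes their intersection $C$ setwise. Since $U$ is one of the $U_i$, we have $C \leq U$, so $C$ is the desired open characteristic subgroup inside our original open set.

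Having produced such a $C$ below every open $U$, the family $\mathcal{U}_c$ of open characteristic subgroups of $G$ is a neighbourhood basis of the identity in $G$. Applying the preceding theorem (Proposition 4.4.3 of \cite{ribes}) with this $\mathcal{U}_c$ yields that $\Aut(G)$ is profinite, which is the claim.

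I do not expect a genuine obstacle here; the one point that needs a little care is the characteristicity of $C$, which rests on the fact that a continuous automorphism maps an open subgroup of index $m$ to an open subgroup of index $m$, so it preserves the finite set of open normal subgroups of index dividing $n$ and hence fixes their intersection. The finiteness of that set — the real content — is exactly Proposition~\ref{prop:fin_gen_fin_subgps}, which we are allowed to assume. Everything else (cores of finite-index subgroups are open, finite intersections of open subgroups are open) is routine.
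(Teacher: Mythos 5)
Your proof is correct and takes essentially the same route as the paper: in both cases one uses Proposition~\ref{prop:fin_gen_fin_subgps} to produce, inside every open subgroup, an open characteristic subgroup (the paper takes the intersection of all open subgroups of index at most $i$, you take the intersection of all open normal subgroups of index dividing a fixed $n$), and then applies the preceding theorem. The only difference is that you spell out the characteristicity argument and the reduction to the core, which the paper leaves implicit.
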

\begin{proof}
	Let $i > 0$ and let $N_i$ be the intersection of all open subgroups of index at most $i$. Then $N_i$ is an open subgroup by Proposition~\ref{prop:fin_gen_fin_subgps} and it is obviously topological characteristic.
	Hence the family $\{ N_i: i > 0 \}$ is  a  neighbourhood basis of the identity. Now we can apply the previous theorem.
\end{proof}

\begin{lemma}[Section 5 Exercise 4 of \cite{dixon}]
	Suppose $G$ is a finite $p$-group and $H = \{ \alpha \in \Aut(G) : [G,\alpha] \subseteq \Phi(G) \}$. Then $H$ is a finite $p$-group.
\end{lemma}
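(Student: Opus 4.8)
The plan is to show that $H$ is a $p$-group by identifying it with a group of upper-triangular-type matrices over $\mathbb{F}_p$, or more cleanly by a filtration argument. First I would recall from Lemma~\ref{lem:pro_p_frattini} that for a finite $p$-group $G$ the Frattini quotient $V = G/\Phi(G)$ is a finite-dimensional $\mathbb{F}_p$-vector space, and that every automorphism of $G$ induces a linear automorphism of $V$. By definition, every $\alpha \in H$ satisfies $[G,\alpha] \subseteq \Phi(G)$, which says precisely that $\alpha$ acts trivially on $V$. So $H$ is contained in the kernel of the natural map $\Aut(G) \to \mathrm{GL}(V)$.

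The core of the argument is then to stratify $H$ using the lower central series (or the lower $p$-series) of $G$. Let $G = \gamma_1(G) \geq \gamma_2(G) \geq \cdots \geq \gamma_{c+1}(G) = 1$ be the lower central series, so each quotient $\gamma_i(G)/\gamma_{i+1}(G)$ is an elementary abelian $p$-group. Since $H$ stabilizes $G/\Phi(G)$, one checks by induction on $i$ that $[\gamma_i(G), \alpha] \subseteq \gamma_{i+1}(G)$ for every $\alpha \in H$: indeed $\gamma_i(G)$ is generated modulo $\gamma_{i+1}(G)$ by commutators involving generators of $G$, and since each generator is moved by $\alpha$ into itself times an element of $\Phi(G) \leq \gamma_2(G)$, expanding the commutator and using standard commutator identities shows the displacement lands in the next term of the series. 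Thus $H$ stabilizes the chain $1 = \gamma_{c+1}(G) \leq \gamma_c(G) \leq \cdots \leq \gamma_1(G) = G$, acting trivially on each successive quotient.

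Finally I would invoke (or quickly reprove) the standard fact that the group of automorphisms of a finite group $G$ stabilizing a fixed chain $1 = G_0 \leq G_1 \leq \cdots \leq G_n = G$ of (characteristic, or at least normal) subgroups and acting trivially on each $G_{i+1}/G_i$ is nilpotent, and is a $\pi$-group for $\pi$ the set of primes dividing the orders of the quotients $G_{i+1}/G_i$. Here every quotient $\gamma_i(G)/\gamma_{i+1}(G)$ has $p$-power order, so this stabilizer — and hence $H$ — is a finite $p$-group. The one point requiring a little care (and the main obstacle) is the inductive verification that $H$ really does stabilize the lower central series rather than just the single Frattini quotient; this is where the commutator manipulations with the condition $[G,\alpha] \subseteq \Phi(G) = G^p[G,G]$ must be carried out, using that $\Phi(G)$ lies inside $\gamma_2(G)$ and that the $P_i$ or $\gamma_i$ are well-behaved under the maps $g \mapsto [g,\alpha]$.
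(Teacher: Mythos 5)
There is a genuine gap at the base of your induction, caused by an inequality running the wrong way. You write that each generator $x_j$ is moved by $\alpha$ ``into itself times an element of $\Phi(G) \leq \gamma_2(G)$,'' but the containment goes the other way: by Lemma~\ref{lem:pro_p_frattini}(c) we have $\gamma_2(G) = [G,G] \leq G^p[G,G] = \Phi(G)$, and the containment is usually strict. So the hypothesis $[G,\alpha] \subseteq \Phi(G)$ does \emph{not} give $[\gamma_1(G),\alpha] \subseteq \gamma_2(G)$, which is exactly the $i=1$ case of your claimed induction. Concretely, take $G = C_{p^2} = \langle g \rangle$ and $\alpha(g) = g^{1+p}$; then $[G,\alpha] = G^p = \Phi(G)$, so $\alpha \in H$, yet $\gamma_2(G) = 1$ and $[G,\alpha] \neq 1$, so $\alpha$ does not stabilize the lower central series. (In the same example $\gamma_1/\gamma_2 \cong C_{p^2}$ is not elementary abelian, so that parenthetical claim is also off, though it is harmless for the $p$-group conclusion you want.) Your parenthetical mention of the lower $p$-series $P_i(G)$ is the right instinct: one can in fact show $[P_i(G),\alpha] \subseteq P_{i+1}(G)$ for $\alpha \in H$ by induction, but this requires checking not only the multilinear commutator calculation you sketch but also that $p$-th powers and expressions like $[P_i,\Phi(G)]$ land in $P_{i+2}$, using the three-subgroups lemma and the Hall--Petrescu congruences; it is more delicate than ``standard commutator identities'' suggests. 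Once that is done, Hall's stability-group theorem finishes as you describe.

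For comparison, the paper sidesteps all of this with a short orbit-counting argument. Given $\alpha \in H$ of prime order $q$, fix a generating set $x_1,\dots,x_d$ of $G$ and let $\alpha$ act coordinatewise on $\Omega = \{(u_1x_1,\dots,u_dx_d) : u_i \in \Phi(G)\}$; since $[G,\alpha]\subseteq \Phi(G)$ and $\Phi(G)$ is normal, this action is well defined. Every tuple in $\Omega$ generates $G$ by Proposition~\ref{prop:frattini}, so $\alpha$ has no fixed points on $\Omega$ unless $\alpha=1$; hence every orbit has size $q$ and $q$ divides $|\Omega| = |\Phi(G)|^d$, a power of $p$, forcing $q=p$. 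By Cauchy this shows $H$ is a $p$-group with no filtration machinery at all. Your approach can be repaired to work, but it is substantially longer, and as written the inductive base case is false.
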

\begin{proof}
	Suppose $\alpha \in H \setminus \{1\}$ has prime order $q$. We fix a generating set $\{x_1, \dots, x_d \}$ for $G$.
	Set $\Omega = \{ (u_1x_1, \dots, u_dx_d ) : u_i \in \Phi(G) \} \subseteq G^d$.
	Each set $\{u_1x_1, \dots, u_dx_d \}$ generates $G$ by Proposition~\ref{prop:frattini} 
	and hence no element of $\Omega$ is fixed under the action of $\alpha$.
	Thus any orbit has length $q$ and hence $q$ divides $ |\Phi(G)|^d$. It follows $q=p$.
\end{proof}

\begin{proposition}[Proposition 5.5 of \cite{dixon}] \label{prop:gamma_pro_p}
	Suppose $G$ is a finitely generated pro-$p$ group. Then $\Gamma(\Phi(G))$ is a pro-$p$ group.
\end{proposition}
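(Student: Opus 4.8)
The plan is to realize $\Gamma(\Phi(G))$ as an inverse limit of finite $p$-groups, using the previous lemma at each finite stage. Recall that $G$ is finitely generated pro-$p$, so by Proposition~\ref{prop:pro_p_without_bars} and Corollary~\ref{cor:pro_p_frattini_open} the Frattini subgroup $\Phi(G) = G^p[G,G]$ is open in $G$. More generally, the lower $p$-series $\{G_i = P_i(G)\}$ consists of open topologically characteristic subgroups forming a neighbourhood basis of $1$ (by Proposition~1.16 of \cite{dixon}, quoted above), and each $G_i$ is itself finitely generated. Since $\Gamma(\Phi(G))$ acts trivially on $G/\Phi(G) = G/G_2$, it leaves every $G_i$ invariant (the $G_i$ are characteristic), so for each $i$ we get a restriction homomorphism $\rho_i \colon \Gamma(\Phi(G)) \to \Aut(G/G_i)$, and these are compatible as $i$ grows.

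First I would check that $\Gamma(\Phi(G))$ is a closed subgroup of the profinite group $\Aut(G)$, so that it is itself profinite; this is immediate since $\Gamma(\Phi(G))$ is a basic open (hence closed) subgroup in the topology on $\Aut(G)$ described above, and $\Aut(G)$ is profinite by the corollary preceding the lemma (as $G$ is finitely generated). Next, since $\bigcap_i G_i = 1$, the maps $\rho_i$ separate points, so the induced map $\Gamma(\Phi(G)) \to \varprojlim_i \rho_i(\Gamma(\Phi(G)))$ is a topological isomorphism. It therefore suffices to show that each image $\rho_i(\Gamma(\Phi(G)))$ is a finite $p$-group.

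Fix $i$ and set $Q = G/G_i$, a finite $p$-group. An element of $\rho_i(\Gamma(\Phi(G)))$ is a continuous automorphism $\bar\gamma$ of $Q$ with $[G,\gamma] \subseteq \Phi(G) = G_2$, hence $[Q,\bar\gamma] \subseteq G_2/G_i$. The key point is that $\Phi(Q) = \Phi(G)G_i/G_i = G_2/G_i$: indeed $\Phi(G/K) = \Phi(G)K/K$ for finitely generated pro-$p$ groups (this follows from Lemma~\ref{lem:pro_p_frattini}(c) together with $P_i(G/K) = P_i(G)K/K$, or directly since maximal open subgroups of $Q$ pull back to maximal open subgroups of $G$ containing $G_i$). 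Thus every element of $\rho_i(\Gamma(\Phi(G)))$ lies in $H_Q := \{\alpha \in \Aut(Q) : [Q,\alpha] \subseteq \Phi(Q)\}$, which is a finite $p$-group by the preceding lemma. Hence $\rho_i(\Gamma(\Phi(G)))$ is a $p$-group, and $\Gamma(\Phi(G)) = \varprojlim_i \rho_i(\Gamma(\Phi(G)))$ is pro-$p$.

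The main obstacle is the bookkeeping around the identification $\Phi(G/G_i) = \Phi(G)G_i/G_i$ and, more importantly, verifying that the restriction maps $\rho_i$ genuinely have image inside $\Aut(G/G_i)$ (i.e.\ that automorphisms in $\Gamma(\Phi(G))$ descend) and that the inverse limit of the images recovers all of $\Gamma(\Phi(G))$ — this last point uses that $\{G_i\}$ is a neighbourhood basis so that a compatible family of automorphisms of the $G/G_i$ assembles to a continuous automorphism of $G = \varprojlim G/G_i$ which automatically lands in $\Gamma(\Phi(G))$. None of these steps is deep, but they are where care is needed; everything else is a direct appeal to the lemma just proved.
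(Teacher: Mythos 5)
Your proof is correct and follows essentially the same route as the paper: note that $\rho_i$ has kernel $\Gamma(G_i)$ (since $G_i \subseteq \Phi(G)$ for $i \geq 2$), so your $\rho_i(\Gamma(\Phi(G)))$ is precisely the paper's $\Gamma(\Phi(G))/\Gamma(G_i)$, and both arguments reduce to applying the preceding lemma to the faithful action on $G/G_i$ with trivial action modulo $\Phi(G/G_i) = G_2/G_i$. Your write-up is somewhat more explicit about the identification $\Phi(G/G_i) = \Phi(G)G_i/G_i$ and about assembling the inverse limit, but no new ideas are involved.
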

\begin{proof}
	The subgroups $\{ \Gamma(G_n) : n \geq 2 \}$ are normal in $\Gamma( \Phi(G))$ and form a basis for the neighborhoods of $1$. It suffices to show that $\Gamma(\Phi(G))/\Gamma(G_n)$ is pro-$p$ for each $n \geq 2$.
	Notice that $\Gamma(\Phi(G))/\Gamma(G_n)$ acts faithfully on the finite $p$-group $G/G_n$ and induces the trivial action on $G/\Phi(G) = (G/G_n)/\Phi(G/G_n)$. Now apply the above lemma to see that $\Gamma(\Phi(G))/\Gamma(G_n)$
	is a finite $p$-group.
\end{proof}

A profinite group $G$ has \emph{virtually} a property $\mathcal{P}$ if it has an open normal subgroup $N \trianglelefteq_o G$ such that $N$ has property $\mathcal{P}$.

\begin{theorem}[Theorem 5.6 of \cite{dixon}] \label{aut_pro_p}
	Suppose $G$ is a finitely generated profinite group. If $G$ is virtually a pro-$p$ group then so is $\Aut(G)$.
\end{theorem}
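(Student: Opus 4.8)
The plan is to exhibit an explicit open normal pro-$p$ subgroup of $\Aut(G)$. Since $G$ is finitely generated, $\Aut(G)$ is profinite (by the corollary above) and the characteristic open normal subgroups of $G$ form a neighbourhood basis of $1$. As $G$ is virtually pro-$p$, I would choose an open pro-$p$ subgroup $H_0 \leq_o G$ and then a characteristic open subgroup $H$ of $G$ with $H \leq H_0$; such an $H$ is automatically normal in $G$ and is pro-$p$, being a closed subgroup of $H_0$. By Proposition~\ref{prop:fin_gen_fin_subgps}, $H$ is finitely generated, so $\Phi(H)$ is open in $H$ by Corollary~\ref{cor:pro_p_frattini_open}, hence open and characteristic in $G$.

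The candidate subgroup is $P := \Gamma(\Phi(H)) = \{\gamma \in \Aut(G) : [G,\gamma] \subseteq \Phi(H)\}$. Because $\Phi(H) \trianglelefteq_o G$, $P$ is one of the basic open neighbourhoods of $1$ in $\Aut(G)$, and because $\Phi(H)$ is characteristic, $P$ is the kernel of the natural homomorphism $\Aut(G) \to \Aut(G/\Phi(H))$ and so is normal; thus it suffices to prove that $P$ is pro-$p$. Every $\gamma \in P$ satisfies $\gamma(g) \in g\,\Phi(H)$ for all $g \in G$, so in particular $\gamma(H) = H$ and $[H,\gamma] \subseteq \Phi(H)$. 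Hence restriction to $H$ gives a continuous homomorphism $\rho$ from $P$ into the subgroup $\{\alpha \in \Aut(H) : [H,\alpha] \subseteq \Phi(H)\}$ of $\Aut(H)$, which is pro-$p$ by Proposition~\ref{prop:gamma_pro_p} applied to $H$. Therefore $\rho(P)$, being a closed subgroup of a pro-$p$ group, is pro-$p$; as both $\rho(P)$ and $C := \ker\rho$ are pro-$p$ (and $P$ is profinite), $P$ will be pro-$p$ once we show that $C = \{\gamma \in \Aut(G) : \gamma|_H = \mathrm{id}_H \text{ and } [G,\gamma] \subseteq \Phi(H)\}$ is pro-$p$.

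This last step is the heart of the argument, and the one I expect to require the most care, although the computation is short. The point is that an automorphism that is trivial on $H$ and congruent to the identity modulo $H$ is unipotent in a strong sense: for $\gamma \in C$ set $f(g) := g^{-1}\gamma(g) = [g,\gamma] \in \Phi(H) \subseteq H$; then, since $\gamma$ fixes $H$ pointwise, an easy induction on $k$ gives $\gamma^{k}(g) = g\, f(g)^{k}$ for all $k \geq 1$ and all $g \in G$. Now let $M \trianglelefteq_o G$ be any characteristic open normal subgroup. Then $H/(H\cap M)$ is a finite $p$-group, say of exponent $p^{e}$, so $f(g)^{p^{e}} \in H\cap M \subseteq M$ for every $g \in G$, which is precisely the statement that $\gamma^{p^{e}} \in \Gamma(M)$. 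Hence every finite quotient $C/(C\cap\Gamma(M))$ has exponent a power of $p$ and so, by Cauchy's theorem, is a $p$-group. Since the subgroups $\Gamma(M)$, with $M$ ranging over the characteristic open normal subgroups of $G$, form a neighbourhood basis of $1$ in $\Aut(G)$ consisting of open normal subgroups, it follows that $C = \varprojlim_{M} C/(C\cap\Gamma(M))$ is pro-$p$. Hence $P$ is pro-$p$, so $\Aut(G)$ is virtually pro-$p$.

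The points left implicit here — continuity of the restriction map $\Aut(G) \to \Aut(H)$ and of $\Aut(G) \to \Aut(G/\Phi(H))$, that a continuous surjection of profinite groups has closed image, and that $\Gamma(M)$ is open and normal in $\Aut(G)$ whenever $M$ is a characteristic open normal subgroup of $G$ — are routine consequences of $H$, $\Phi(H)$ and $M$ being characteristic open subgroups of the finitely generated group $G$.
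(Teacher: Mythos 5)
Your proof is correct, and the overall framework matches the paper's: reduce to a characteristic open pro-$p$ subgroup $H$ with $\Phi(H)$ open and characteristic, take $P=\Gamma(\Phi(H))$ as the candidate open normal pro-$p$ subgroup of $\Aut(G)$, restrict to $\Aut(H)$, and handle the image via Proposition~\ref{prop:gamma_pro_p} and the kernel $C=\ker\rho$ separately. Where you genuinely diverge from the paper is in the treatment of $C$ (the paper's $\Xi$). The paper quotes Lemma 5.4 of \cite{dixon}, which identifies $\Xi$ with a closed subgroup of $Z(H)^{(m)}$ via $\gamma\mapsto([g_i,\gamma])_i$ for coset representatives $g_1,\dots,g_m$ of $G/H$, and then observes that $Z(H)^{(m)}$ is pro-$p$. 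You instead give a self-contained ``unipotence'' computation: for $\gamma\in C$ with $f(g)=[g,\gamma]$, the identity $\gamma^k(g)=g\,f(g)^k$ (valid because $\gamma$ fixes $H\supseteq\Phi(H)\ni f(g)$ pointwise) shows that $\gamma^{p^e}\in\Gamma(M)$ whenever $p^e$ bounds the exponent of $H/(H\cap M)$, so every finite quotient $C/(C\cap\Gamma(M))$ has $p$-power exponent and is thus a $p$-group by Cauchy. Both routes are sound; the paper's is shorter at the cost of an external lemma and exploits the finer structural fact that $\Xi$ is (essentially) $\mathrm{Hom}(G/H,Z(H))$, while yours is more elementary, avoids Lemma~5.4 entirely, and only needs the observation that the $\Gamma(M)$ (for $M$ characteristic open normal, cofinal in all open normal subgroups) form a neighbourhood basis of $1$ in $\Aut(G)$ by open normal subgroups. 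A small cosmetic point: you don't actually need to verify $\gamma(H)=H$ from $[G,\gamma]\subseteq\Phi(H)$, since $H$ is characteristic by construction.
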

\begin{proof}
	The group $G$ has a topological characteristic open normal pro-$p$ subgroup $H$. Note that $H$ is finitely generated and therefore $\Phi(H)$ is open and topological characteristic in $G$.
	Let $\Delta = \Gamma(\Phi(H))$ be the kernel of the action of $\Aut(G)$ an $G/\Phi(H)$. Then $\Delta \trianglelefteq_o \Aut(G)$ and we will see that $\Delta$ is pro-$p$.
	
	Consider the restriction map $\pi : \Delta \rightarrow \Aut(H)$ and set $\Xi = \ker \pi$. Given $N \trianglelefteq_o H$ we have $(\Delta \cap \Gamma(N) )^\pi \subseteq \Gamma_H(N)$ and therefore $\pi$ is continuous.
	Hence $\Xi$ is a closed normal subgroup of $\Delta$.
	
	By Lemma 5.4 of \cite{dixon}, $\Xi$ is isomorphic to a closed subgroup of $Z(H)^{(m)}$, where $m = \gpdim(G/H)$ is finite. As $H$ is pro-$p$, so is $\Xi$. The group $\Delta/\Xi$ is isomorphic to the closed subgroup $\Delta^\pi$
	of $\Aut(H)$. Note that in fact $\Delta^\pi \leq \Gamma_H(\Phi(H))$ and 
	therefore $\Delta^\pi$ is pro-$p$ by Proposition~\ref{prop:gamma_pro_p}. Hence $\Delta$ is pro-$p$.
\end{proof}

\section{Powerful groups}
In this section we study powerful pro-$p$ groups. These are pro-$p$ groups in which the subgroup generated by the $p$-th powers is large. The lower $p$-series then gives  them a canonical structure reminscent of an abelian pro-$p$ group. This analogy will be used later in order to identify certain groups as $p$-adic analytic. Throughout this section, $p$ will be a fixed prime number different from $2$. If $G$ is a $p$-group or a pro-$p$ group we will set $G_i = P_i(G)$ to be the $i$-th group in the lower $p$-series (\ref{lower_p}).

\subsection{Powerful \textit{p}-groups}
We start by studying finite $p$-groups. We follow Section 2 of \cite{dixon} to develop the theory of powerful $p$-groups. Throughout we assume $p\neq 2$ (and refer to \cite{dixon} for the general case).

\begin{definition}
	Suppose $G$ is a finite $p$-group, $p\neq 2$.
	\begin{itemize}
		\item[(a)] $G$ is \textit{powerful} if $G/G^p$ is abelian.
		\item[(b)] A subgroup $N \leq G$ is powerfully embedded in $G$ if  $[N,G] \leq N^p$ .
	\end{itemize}
\end{definition}

A finite $p$-group $G$ is powerful if and only if $G$ embeds powerfully into itself.
If $N \leq G$ is powerfully embedded in $G$, then $[N,G] \leq N$ and therefore $N$ is a normal subgroup of $G$.
We will write $N \pe G$ if $N$ is powerfully embedded in $G$.

\begin{lemma}[Lemma 2.4 of \cite{dixon}]
	Suppose $G$ is a powerful $p$-group. Then for each $i \geq 1$:
	\begin{itemize}
		\item[(a)] $G_i$ is powerfully embedded in $G$ and $G_{i+1} = G_i^p = \Phi(G_i)$.
		\item[(b)] The map $\Theta_i : G_i/G_{i+1} \rightarrow G_{i+1}/G_{i+2}, \; x \mapsto x^p$ is a well-defined surjective homomorphism.
	\end{itemize}
\end{lemma}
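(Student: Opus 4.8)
The plan is to establish (a) by induction on $i$, in the course of which the whole of (a) collapses to a single commutator fact, and then to read off (b) from (a) together with the Hall--Petrescu collection formula for $p$-th powers. Throughout, the hypothesis $p\neq2$ will be used only through the divisibility $p\mid\binom{p}{2}$.

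For (a): the case $i=1$ is exactly the hypothesis, since $G_1=G$ is powerfully embedded in itself if and only if $[G,G]\leq G^p$, i.e.\ if and only if $G$ is powerful. For the step, assume $G_i\pe G$. Then $[G_i,G]\leq G_i^p$, so $G_{i+1}=G_i^p[G_i,G]=G_i^p$; moreover $\Phi(G_i)=G_i^p[G_i,G_i]$ by Lemma~\ref{lem:pro_p_frattini}(c) applied to the finite $p$-group $G_i$, and $[G_i,G_i]\leq[G_i,G]\leq G_i^p$, so $\Phi(G_i)=G_i^p=G_{i+1}$ as well. It remains to see $G_{i+1}\pe G$, that is $[G_i^p,G]\leq(G_i^p)^p$, which is precisely the fact below applied to $N=G_i$. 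The fact I would quote (or prove, following Section~2 of \cite{dixon}) is: in a finite $p$-group with $p\neq2$, if $N\pe G$ then $N^p\pe G$. Its proof proceeds by writing, for $x\in N$ and $g\in G$, $[x^p,g]=x^{-p}(x[x,g])^p$ with $[x,g]\in[N,G]\leq N^p$, expanding $(x[x,g])^p$ by the collection formula, and checking that every factor other than $x^p$ lies in $(N^p)^p$: a $p$-th power of an element of $N^p$, or a term $c_j^{\binom{p}{j}}$ with $c_j\in\gamma_j(\langle x,[x,g]\rangle)\leq N^p$ for $2\leq j\leq p$, where for $j<p$ one uses $p\mid\binom{p}{j}$ and for $j=p$ one iterates $N\pe G$; since $(N^p)^p$ is characteristic, hence normal in $G$, the bound then propagates from these generators to all of $[N^p,G]$. (The case $p=2$ genuinely fails, since $\binom{2}{2}=1$.)

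For (b): by (a) we have $G_{i+1}=G_i^p$, $G_{i+2}=G_{i+1}^p$ and $G_{i+1}\pe G$, and in particular $x^p\in G_{i+1}$ for $x\in G_i$, so $\Theta_i$ is at least a map of sets. The key congruence is $(ab)^p\equiv a^pb^p\pmod{G_{i+2}}$ for all $a,b\in G_i$: expanding $(ab)^p$ by the collection formula, the weight-two term $c_2^{\binom{p}{2}}$ lies in $G_{i+1}^p=G_{i+2}$ because $c_2\in\gamma_2(\langle a,b\rangle)\leq[G_i,G_i]\leq G_{i+1}$ and $p\mid\binom{p}{2}$, while all higher collection terms lie in $[\gamma_2(\langle a,b\rangle),G]\leq[G_{i+1},G]\leq G_{i+1}^p=G_{i+2}$ by $G_{i+1}\pe G$. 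Granting this, if $a'\equiv a\pmod{G_{i+1}}$, say $a'=ab$ with $b\in G_{i+1}$, then $(a')^p\equiv a^pb^p\equiv a^p\pmod{G_{i+2}}$ since $b^p\in G_{i+1}^p=G_{i+2}$; hence $\Theta_i$ is well-defined, and the same congruence shows it is a homomorphism. Surjectivity is then immediate, since $G_{i+1}/G_{i+2}=G_i^p/G_{i+2}$ is generated by the cosets $x^pG_{i+2}$, $x\in G_i$, i.e.\ by the image of the homomorphism $\Theta_i$.

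The hard part will be the commutator bookkeeping underpinning both the fact $N\pe G\Rightarrow N^p\pe G$ and the congruence $(ab)^p\equiv a^pb^p\pmod{G_{i+2}}$: these rest on the Hall--Petrescu collection formula and on verifying that every error term falls into the appropriate $p$-th power subgroup, the one truly delicate point being $p\mid\binom{p}{2}$, which is exactly why $p\neq2$ is assumed throughout. Everything else is routine manipulation of the lower $p$-series.
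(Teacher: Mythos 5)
Your proof is correct and takes essentially the same route as the paper. Both arguments prove (a) by induction, with the inductive step resting on the auxiliary fact that $N \pe G$ implies $N^p \pe G$ in a finite $p$-group with $p$ odd. The paper invokes this as ``the above Proposition'' (a reference to Proposition 2.3 in \cite{dixon} which the survey does not actually reproduce); you state it explicitly and sketch its proof. One caution about that sketch: the bound on the weight-$p$ collection term $c_p \in \gamma_p(\langle x, [x,g]\rangle)$ cannot be obtained by ``iterating $N \pe G$'' --- following the chain $\gamma_p \leq \gamma_3 \leq [\gamma_2,\langle x,[x,g]\rangle] \leq [N^p,N] \leq [N^p,G]$ lands you exactly on what you are trying to bound by $(N^p)^p$, so this would be circular. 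Dixon's actual proof instead passes to the quotient by $(N^p)^p[N^p,G,G]$, where $[N^p,G]$ becomes central; this kills every collection term $c_j$ with $j\geq 3$, leaving only $c_2^{\binom{p}{2}}$ to handle, and then a standard commutator bootstrapping argument recovers the containment in $G$. Since you explicitly flag the fact as something you would quote, this is not a gap in your main proof, but the sketch as written would not close.

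For (b) the two arguments differ in presentation. The paper first replaces $G$ by $G_i$ (legitimate since (a) shows $G_i$ is powerful with $P_2(G_i)=G_{i+1}$, $P_3(G_i)=G_{i+2}$), then passes to $G_i/G_{i+2}$, a class-$2$ group where $[G,G]$ is central and the whole collection formula collapses to the single identity $(xy)^p=x^py^p[y,x]^{p(p-1)/2}$; then $p\mid\binom{p}{2}$ finishes immediately. You instead work directly modulo $G_{i+2}$ and check each Hall--Petrescu term: $c_2^{\binom{p}{2}}\in G_{i+1}^p=G_{i+2}$ using $p\mid\binom{p}{2}$, and $c_j\in\gamma_3\leq[G_{i+1},G]\leq G_{i+1}^p=G_{i+2}$ for $j\geq 3$ using $G_{i+1}\pe G$. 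Both are valid; the paper's reduction buys a shorter computation at the cost of the initial ``replace $G$ by $G_i/G_{i+2}$'' step, while your version is more self-contained but requires bookkeeping for the higher-weight terms.
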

\begin{proof}
	As $G$ is powerful, $G_1 = G$ is powerfully embedded into itself. Now we argue by induction. Assume $G_i \pe G$. Then, by the above Proposition, $G_i^p \pe G$.
	Notice $G_{i+1} = G_i^p[G_i,G]= G_i^p$. Hence $G_i \pe G$ for each $i \geq 1$. 
	
	We have $G_i^p \leq G_i^p[G_i,G_i] \leq G_i^p[G_i,G] = G_{i+1} = G_i^p$. This shows (a).
	
	In particular, each $G_i$ is powerful and we have $G_{i+1} = P_2(G_i)$ and $G_{i+2} = P_3(G_i)$. It is sufficient to prove (b) in the case $i = 1$ as we can replace $G$ by $G_i$.
	Additionally we can assume $G_3 = 1$ by replacing $G$ by $G/G_3$.
	
	Now $[G_2,G] \leq G_3 = 1$ and thus $G_2 \leq Z(G)$. Hence $[G,G] \leq G_2 \leq Z(G)$.
	Given $x,y \in G$, we have $(xy)^p = x^py^p [x,y]^{p(p-1)/2}$, so $p(p-1)/2$ is divided by $p$ and we obtain $[y,x]^{p(p-1)/2} \in [G,G]^p \leq G_2^p = G_3 = 1$. Thus $(xy)^p = x^p y^p$. The image of $G_2$ under $\Theta$ is contained in $G_3$ as $G_2^p = G_3$.
	Therefore $\Theta$ is a well-defined surjective homomorphism.
\end{proof}

\begin{lemma}[Lemma 2.5 of \cite{dixon}]
	Suppose $G = \langle a_1, \dots, a_d \rangle$ is a powerful $p$-group. Then $G^p = \langle a_1^p, \dots, a_d^p \rangle$.
\end{lemma}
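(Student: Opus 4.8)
Write $N=\langle a_1^p,\dots,a_d^p\rangle$; clearly $N\leq G^p$, so only the reverse inclusion needs work. The plan is to pass to the quotient $\overline G=G/G_3$, where the nilpotency class drops to at most $2$ and the $p$-power map becomes an endomorphism, prove the statement there, and then lift it back using that $G_3$ is the Frattini subgroup of $G^p$. Note that no induction on $|G|$ is needed.

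First I would collect what Lemma 2.4 of \cite{dixon} gives: since $G$ is powerful, $G_2=G^p=\Phi(G)$, and $G_3=(G_2)^p=\Phi(G_2)$, so in particular $G_3\leq G_2=G^p$. Quotients of powerful $p$-groups are powerful, so $\overline G:=G/G_3$ is powerful, and by Proposition 1.16 of \cite{dixon} $P_3(\overline G)=P_3(G)G_3/G_3=1$, i.e.\ $\overline G_3=1$. Hence $[\overline G_2,\overline G]\leq\overline G_3=1$, so $\overline G_2\leq Z(\overline G)$, and since also $[\overline G,\overline G]\leq\Phi(\overline G)=\overline G_2$, the group $\overline G$ has class at most $2$. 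Then exactly as in the proof of Lemma 2.4(b) — and this is the step that needs $p\neq 2$, via $p\mid\binom{p}{2}$ together with $[\overline G,\overline G]^p\leq\overline G_2^{\,p}=\overline G_3=1$ — one gets $(\overline x\,\overline y)^p=\overline x^{\,p}\overline y^{\,p}$ for all $\overline x,\overline y\in\overline G$, so $\theta\colon\overline G\to\overline G$, $\overline x\mapsto\overline x^{\,p}$, is a homomorphism. Its image is a subgroup containing every $p$-th power, hence equals $\overline G^{\,p}$; and as $\overline G=\langle\overline a_1,\dots,\overline a_d\rangle$ we conclude $\overline G^{\,p}=\theta(\overline G)=\langle\overline a_1^{\,p},\dots,\overline a_d^{\,p}\rangle$.

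Translating back, $\overline G^{\,p}=G^pG_3/G_3$ and $\langle\overline a_i^{\,p}\rangle=NG_3/G_3$, so $G^pG_3=NG_3$, and since $G_3\leq G^p$ this reads $G^p=NG_3$. Finally $G_3=\Phi(G^p)$ by the first paragraph, so $G^p=N\,\Phi(G^p)$ with $N\leq G^p$; by Proposition~\ref{prop:frattini} a generating set of $N$ together with $\Phi(G^p)$ generates $G^p$, hence already generates $G^p$, i.e.\ $N=G^p$, as required. The only genuinely delicate point is the reduction to class $\leq 2$ and the verification that $\theta$ is a homomorphism there: this is where $p\neq2$ is essential and where one must be careful that $\overline G_3=1$, so that both the commutator term $[\overline G,\overline G]^p$ and the binomial error term vanish; everything else is bookkeeping with the lower $p$-series and one use of the non-generator property of $\Phi$.
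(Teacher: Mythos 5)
Your proof is correct and is essentially the paper's proof: the paper invokes the surjective homomorphism $\Theta\colon G/G_2\to G_2/G_3$ from Lemma~2.4(b) to get $G_2=\langle a_1^p,\dots,a_d^p\rangle G_3$ and then uses $G_3=\Phi(G_2)$ to discard $G_3$, which is exactly your reduction modulo $G_3$ followed by the Frattini step. The only difference is cosmetic: you re-derive the relevant piece of Lemma~2.4(b) (the $p$-power map on $G/G_3$ is a homomorphism) rather than citing it directly.
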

\begin{proof}
	Let $\Theta: G/G_2 \rightarrow G_2/G_3, \; x \mapsto x^p$ be the homomorphism from the previous Lemma. As $\Theta$ is surjective, $G_2/G_3$ is generated by 
	$\{ a_1G_3, \dots a_dG_3 \}$. Therefore $G_2 = \langle a_1^p, \dots , a_d^p \rangle G_3$. Now $G_3 = \Phi(G_2)$ is the Frattini subgroup of $G_2$ and therefore
	$G^p = G_2$ is generated by $\{ a_1^p, \dots , a_d^p \}$.
\end{proof}

\begin{proposition}[Proposition 2.6 of \cite{dixon}]
	Suppose $G$ is a powerful $p$-group. Then every element of $G^p$ is a $p$-th power.
\end{proposition}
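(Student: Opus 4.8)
The plan is to argue by induction on $|G|$; the essential idea is that the bottom term of the lower $p$-series is central of exponent $p$, and that when one multiplies two $p$-th powers together the single collection commutator that appears can be forced into that bottom term, where it is killed.

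If $G^p=1$ there is nothing to prove, so assume $G^p=G_2\neq 1$ and choose $n\geq 2$ maximal with $G_n\neq 1$; then $G_{n+1}=1$. By Lemma~2.4 of \cite{dixon} we have $G_{n+1}=G_n^p$, so $G_n$ has exponent $p$, and $[G_n,G]\leq G_{n+1}=1$ gives $G_n\leq Z(G)$; moreover $G_n\leq G_2=G^p$ since $n\geq 2$. I record two consequences. First, by Lemma~2.4 of \cite{dixon} the map $\Theta_{n-1}:G_{n-1}/G_n\to G_n/G_{n+1}$, $g\mapsto g^p$, is surjective, and because $G_{n+1}=1$ this means every element of $G_n$ is literally a $p$-th power of an element of $G_{n-1}$. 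Second, $G/G_n$ is again powerful --- its quotient by its subgroup of $p$-th powers is a quotient of the abelian group $G/G^p$ --- and $(G/G_n)^p=G^pG_n/G_n=G_2/G_n$; since $|G/G_n|<|G|$, the induction hypothesis applies to it.

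Now take $x\in G^p=G_2$. Its image in $G/G_n$ lies in $(G/G_n)^p$, hence by induction is a $p$-th power, so $x=y^pz$ for some $y\in G$ and $z\in G_n$. By the first consequence above, $z=a^p$ for some $a\in G_{n-1}$, and therefore $x=y^pa^p$. Since $[y,a]\in[G_{n-1},G]\leq G_n$ by Definition~\ref{lower_p}, the commutator $[y,a]$ is central, so $\langle y,a\rangle$ has nilpotency class at most $2$ and the collection identity (exactly as used in the proof of Lemma~2.4 of \cite{dixon}) gives
\[ (ya)^p=y^pa^p[y,a]^{p(p-1)/2}. \]
But $[y,a]$ lies in $G_n$, which has exponent $p$, and $p(p-1)/2$ is divisible by $p$ because $p\neq 2$; hence $[y,a]^{p(p-1)/2}=1$, so $x=y^pa^p=(ya)^p$ is a $p$-th power. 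This closes the induction.

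The whole point is concentrated in the last paragraph, and I do not expect a genuine obstacle there: one only has to be disciplined about recording which term of the lower $p$-series each element sits in, so that $[y,a]\in G_n$ (making it central, so that the collection formula has a single commutator term, and of order dividing $p$) and $G_n$ really does have exponent $p$. The divisibility of $p(p-1)/2$ by $p$ is precisely where the standing hypothesis $p\neq 2$ enters, as it does throughout this section.
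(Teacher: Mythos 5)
Your proof is correct, and it takes a genuinely different inductive route from the paper's. The paper fixes $g\in G^p$, writes $g\equiv x^p\pmod{G_3}$ via the surjectivity of $\Theta_1$, sets $y=x^{-p}g\in G_3$, and considers a subgroup $H$ with $g\in H^p$: if $H$ is proper, induction on $|G|$ applies to $H$; if $H=G$, then $G=\langle x\rangle\Phi(G)=\langle x\rangle$ is cyclic and the claim is immediate. (This requires knowing that $H$ is powerful, which in Dixon et al.\ comes from an auxiliary result about subgroups of powerful groups containing $G^p$.) You instead pass to the quotient $G/G_n$ by the last nontrivial term of the lower $p$-series, apply the induction hypothesis there, and lift the answer through $G_n$ by an explicit collection-formula computation $(ya)^p=y^pa^p[a,y]^{p(p-1)/2}$, which kills the commutator because $[y,a]\in G_n$ is central of exponent $p$ and $p\mid\binom{p}{2}$ for odd $p$. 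Your route trades the paper's subgroup-descent (and the nontrivial fact that the subgroup $H$ is powerful) for the easier observation that quotients of powerful groups are powerful, at the cost of invoking the class-$2$ collection identity explicitly; it is also more constructive, since it produces the $p$-th root $ya$ by hand rather than relocating the problem into a smaller group. Both are inductions on $|G|$ and both ultimately rest on the surjectivity of the power maps $\Theta_i$ from Lemma~2.4, so they are cousins, but the mechanism of the induction step is genuinely different.
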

\begin{proof}
	We prove the statement by induction on $|G|$. Fix $g \in G^p$. Then $gG_3$ is contained in the image of $\Theta$. Hence there is some $x \in G$ such that $x^pG_3 = gG_3$.
	Set $y = (x^p)^{-1} g \in G_3$.
	
	Now consider the group $H = \langle G^p, y \rangle$. $H$ is powerful. As $y \in G_3 = G_2^p \subseteq G^p$ we obtain $g \in H^p$.
	If $H \neq G$ then by induction $g$ is a $p$-th power in $H$.
	If $G = H$ then $G = \langle x \rangle$ as $\Phi(G) = G^p$.
\end{proof}

Inductively we obtain the following theorem from the previous results
\begin{theorem}[Theorem 2.7 of \cite{dixon}] \label{thm:finite_powerful}
	Suppose $G = \langle a_1, \dots, a_d \rangle$ is a powerful $p$-group. Then the following hold:
	\begin{itemize}
		\item[(a)] $P_{k+1}(G_i) = G_i^{p^k}$ for all $i \geq 1$ and $k \geq 0$.
		\item[(b)] $G_i = G^{p^{(i-1)}} = \{ x^{p^{(i-1)}} : x \in G \} = \langle a_1^{p^{(i-1)}}, \dots, a_d^{p^{(i-1)}} \rangle$ for all $i \geq 1$.
		\item[(c)] The map $x \mapsto x^{p^k}$ induces a surjective homomorphism $G_i/G_{i+1} \rightarrow G_{i+k}/G_{i+k+1}$ for all $i \geq 1$ and $k \geq 0$.
	\end{itemize}
\end{theorem}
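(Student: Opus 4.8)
The plan is to prove part (b) first by induction on $i$, then obtain part (a) as an immediate consequence, and finally derive part (c) by iterating the $p$-th power maps from Lemma 2.4 of \cite{dixon}.

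For (b) the base case $i = 1$ is trivial, since $G_1 = G = G^{p^0} = \{x^{p^0} : x \in G\} = \langle a_1,\dots,a_d\rangle$. For the inductive step I would assume that $G_i = G^{p^{i-1}} = \{x^{p^{i-1}} : x \in G\} = \langle a_1^{p^{i-1}},\dots,a_d^{p^{i-1}}\rangle$ and argue as follows. By Lemma 2.4 of \cite{dixon}, $G_i$ is powerfully embedded in $G$; in particular $G_i$ is itself a powerful $p$-group, and $G_{i+1} = G_i^p$. Since $G_i$ is powerful and generated by the elements $a_j^{p^{i-1}}$, Lemma 2.5 of \cite{dixon} applied to $G_i$ yields $G_{i+1} = G_i^p = \langle a_1^{p^i},\dots,a_d^{p^i}\rangle$. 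Each $a_j^{p^i}$ lies in $G^{p^i}$, and conversely every $x^{p^i} = (x^{p^{i-1}})^p$ satisfies $x^{p^{i-1}} \in G_i$, hence $x^{p^i} \in G_i^p$; so $G_{i+1} = G_i^p = G^{p^i}$ as well. Finally, Proposition 2.6 of \cite{dixon} applied to the powerful group $G_i$ says that every element of $G_i^p$ is a $p$-th power of an element of $G_i$, i.e.\ $G_i^p = \{y^p : y \in G_i\}$, and substituting the inductive hypothesis $G_i = \{x^{p^{i-1}} : x \in G\}$ rewrites the right-hand side as $\{x^{p^i} : x \in G\}$. This closes the induction and proves (b).

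Part (a) follows at once: each $G_i$ is a finite, hence finitely generated, powerful $p$-group, so applying (b) with $G_i$ in place of $G$ gives $P_{k+1}(G_i) = G_i^{p^k}$ for all $k \ge 0$. For (c), the case $k = 0$ is the identity map $G_i/G_{i+1} \to G_i/G_{i+1}$; for $k \ge 1$ I would compose the $k$ surjective homomorphisms $\Theta_i, \Theta_{i+1}, \dots, \Theta_{i+k-1}$ from part (b) of Lemma 2.4 of \cite{dixon}, where $\Theta_j \colon G_j/G_{j+1} \to G_{j+1}/G_{j+2}$ sends $x G_{j+1}$ to $x^p G_{j+2}$. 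The composite is a surjective homomorphism $G_i/G_{i+1} \to G_{i+k}/G_{i+k+1}$ sending $x G_{i+1}$ to $x^{p^k} G_{i+k+1}$, which is the map asserted in (c).

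The step I expect to require the most care is the bookkeeping in the induction for (b): one has to carry the \emph{set}-theoretic equality $G_i = \{x^{p^{i-1}} : x \in G\}$ through the inductive hypothesis, not just the equality of the underlying subgroups, because it is exactly this that lets Proposition 2.6 of \cite{dixon}, applied to $G_i$, identify $G_i^p$ with the set of $p^i$-th powers of elements of $G$. One also has to record at each step that $G_i$ is itself powerful (which Lemma 2.4 of \cite{dixon} provides) before Lemma 2.5 and Proposition 2.6 of \cite{dixon} may be invoked for it; none of this is difficult, but it is the part most easily gotten wrong.
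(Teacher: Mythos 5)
Your proof is correct and follows the same route the paper intends: the paper gives no details for Theorem~\ref{thm:finite_powerful}, saying only that it is obtained ``inductively\ldots from the previous results,'' namely Lemma 2.4, Lemma 2.5, and Proposition 2.6 of \cite{dixon}, and your induction on $i$ for (b), followed by (a) via specialization to $G_i$ and (c) via composition of the maps $\Theta_j$, is exactly that argument carried out. Your flag about tracking the set-theoretic equality $G_i = \{x^{p^{i-1}} : x \in G\}$ through the induction, and about recording at each step that $G_i$ is powerful before invoking Lemma 2.5 and Proposition 2.6 for it, is indeed where the care is needed, and you handle both correctly.
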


We now see that a finitely generated powerful $p$-group is a product of cyclic groups:

\begin{corollary}[Corollary 2.8 of \cite{dixon}] \label{cor:finite_powerful_product}
	Suppose $G = \langle a_1, \dots, a_d \rangle$ is a powerful $p$-group. Then $G = \langle a_1 \rangle \cdots \langle a_d \rangle$ is 
	a product of cyclic groups.
\end{corollary}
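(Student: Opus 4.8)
The plan is to induct on the number of generators $d$, using Theorem~\ref{thm:finite_powerful} at each step to extract a single cyclic factor. The base case $d=1$ is trivial since $G = \langle a_1 \rangle$. For the inductive step, suppose the result holds for powerful $p$-groups generated by fewer than $d$ elements.

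First I would set $H = \langle a_2, \dots, a_d \rangle$ and try to show $H$ is normal in $G$ with $G = \langle a_1 \rangle H$; then, since $H$ is itself a powerful $p$-group generated by $d-1$ elements (powerfulness passes to this subgroup because $H^p$ is central modulo higher terms of the lower $p$-series, or more directly because $H \geq \Phi(H)$-considerations apply), the induction hypothesis gives $H = \langle a_2 \rangle \cdots \langle a_d \rangle$, and we are done. The key tool is Theorem~\ref{thm:finite_powerful}(b): $G_2 = G^p = \langle a_1^p, \dots, a_d^p \rangle$, and more importantly each $G_i/G_{i+1}$ is generated by the images of $a_1^{p^{i-1}}, \dots, a_d^{p^{i-1}}$ via the surjective power maps in part (c). The strategy is to argue that the subgroup $\langle a_1 \rangle \langle a_2, \dots, a_d\rangle$ already surjects onto $G/G_{i+1}$ for every $i$ — one checks level by level that at stage $i$ the coset $a_1^{p^{i-1}} G_{i+1}$ together with the images of $a_2^{p^{i-1}}, \dots, a_d^{p^{i-1}}$ generate $G_i/G_{i+1}$, which they do by part (c) — so that the product set exhausts all of $G$ since the lower $p$-series reaches $1$.

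The cleanest packaging is probably: show by downward/upward induction on the lower $p$-series that $G = \langle a_1\rangle \cdot \langle a_2, \dots, a_d \rangle \cdot G_i$ for all $i$, starting from $i=2$ where this is Proposition~\ref{prop:frattini} (the $a_j$ generate $G$ modulo $\Phi(G) = G_2$), and pushing the $G_i$ down to $G_{i+1}$ using that $G_i/G_{i+1}$ is generated by the $p^{i-1}$-th powers of the generators, which already lie in $\langle a_1 \rangle \cdot \langle a_2, \dots, a_d\rangle$ up to elements of $G_{i+1}$. Since $G$ is finite, $G_i = 1$ eventually, giving $G = \langle a_1 \rangle \langle a_2, \dots, a_d \rangle$.

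The main obstacle I anticipate is the bookkeeping in the step $G = \langle a_1 \rangle \langle a_2, \dots, a_d\rangle G_i \Rightarrow G = \langle a_1 \rangle \langle a_2, \dots, a_d\rangle G_{i+1}$: one has to be careful that when rewriting an element of $G_i$ as a product of powers of the $a_j^{p^{i-1}}$ times something in $G_{i+1}$, the $a_1^{p^{i-1}}$ contribution can genuinely be absorbed into $\langle a_1 \rangle$ and the rest into $\langle a_2, \dots, a_d\rangle$ — this uses that these power maps are \emph{homomorphisms} on the abelian quotients $G_i/G_{i+1}$ (Theorem~\ref{thm:finite_powerful}(c)), so the order of the factors does not matter modulo $G_{i+1}$, and the powers $(a_j)^{p^{i-1}}$ are literally $p^{i-1}$-st powers of the $a_j$ by part (b), hence lie in $\langle a_j \rangle$. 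Once this commutativity-modulo-$G_{i+1}$ is invoked, the argument is routine.
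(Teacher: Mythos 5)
Your plan to induct on the number of generators $d$ has a genuine gap: to apply the inductive hypothesis to $H = \langle a_2, \dots, a_d\rangle$ you need $H$ to be a powerful $p$-group, but a subgroup of a powerful $p$-group need not be powerful, and neither of the justifications you suggest works (``$H \geq \Phi(H)$'' holds for every group, so it says nothing, and ``$H^p$ is central modulo higher terms'' does not address whether $[H,H] \leq H^p$). There is also no reason for $H$ to be normal in $G$. Your ``cleanest packaging'' keeps $\langle a_2, \dots, a_d\rangle$ as a single block, so the same problem resurfaces when you eventually have to decompose it. There is a second, subtler issue in the level-by-level absorption: what you need at the step from $G_i$ to $G_{i+1}$ is not merely that $G_i/G_{i+1}$ is abelian (which only lets you reorder elements \emph{inside} $G_i$), but that $G_i$ is central in $G$ modulo $G_{i+1}$, i.e.\ $[G_i,G] \leq G_{i+1}$. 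That is the powerfully-embedded condition $G_i \pe G$ established earlier in the section, not a consequence of abelianness of the quotient, and it is what actually lets you slide a $G_i$-contribution past the earlier factors $\langle a_1\rangle,\langle a_2\rangle,\dots$.

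The paper's proof inducts instead on the length $l$ of the lower $p$-series. Applying the inductive hypothesis to the shorter group $G/G_l$ gives $G = \langle a_1 \rangle \cdots \langle a_d \rangle G_l$, and then $G_l = \langle a_1^{p^{l-1}}, \dots, a_d^{p^{l-1}}\rangle$ is \emph{literally central} and abelian, since $[G_l,G] \leq G_l^p = G_{l+1} = 1$; hence $G_l = \langle a_1^{p^{l-1}}\rangle \cdots \langle a_d^{p^{l-1}}\rangle \subseteq \langle a_1 \rangle \cdots \langle a_d\rangle$ and the factor $G_l$ is absorbed by centrality. That induction never forms a subgroup from a proper subset of the generators, and it needs commutativity only once, at the very bottom, where it is genuine centrality rather than centrality modulo a smaller term. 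Your scheme can be repaired by proving $G = \langle a_1\rangle \cdots \langle a_d\rangle G_i$ (with all $d$ cyclic factors at once) by upward induction on $i$ and invoking $[G_i,G]\leq G_{i+1}$ at each step, but the paper's induction on $l$ is the shorter route.
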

\begin{proof}
	Let $l$ be the length of the lower $p$ series, i.e. $G_l > G_{l+1} = 1$. By induction on $l$, we may assume that the claim holds for $G/G_l$.
	Hence
	\[ G/G_l = \langle a_1G_l , \dots, a_dG_l \rangle = \langle a_1G_l \rangle \cdots \langle a_dG_l \rangle . \]
	Therefore $G = \langle a_1 \rangle \cdots \langle a_d \rangle G_l$. But $G_l = \langle a_1^{p^{l-1}} , \dots, a_d^{p^{l-1}} \rangle$ and as 
	$G_l \pe G$, we have $[G_l,G] \leq G_l^p = G_{l+1} = 1$. Hence $G_l$ is central in $G$ and therefore 
	$G_l \subseteq \langle a_1 \rangle \cdots \langle a_d \rangle$.
\end{proof}

\subsection{Powerful pro-$p$ groups}
We follow Section 3.1 of \cite{dixon} to develop the theory of powerful pro-$p$ groups. Again we restrict to the assumption $p\neq 2$ and refer to \cite{dixon} for the general case.

\begin{definition}
Suppose $G$ is a pro-$p$ group.
\begin{itemize}
	\item[(a)] $G$ is \textit{powerful} if $G/\overline{G^p}$ is abelian.
	\item[(b)] An open subgroup $N \leq_o G$ is powerfully embedded in $G$ if  $[N,G] \leq \overline{N^p}$.
\end{itemize}
\end{definition}

As in the finite case we see that a  pro-$p$ group $G$ is powerful if and only if $G$ embeds powerfully into itself.
Furthermore, if $N \leq_o G$ is powerfully embedded in $G$, then $N$ is a normal subgroup of $G$, so
we continue to write $N \pe G$ if $N$ is powerfully embedded in $G$.
The exact connection between powerful $p$-groups and powerful pro-$p$ groups is given  by

\begin{proposition}[Corollary 3.3 of \cite{dixon}]
	A topological group $G$ is powerful pro-$p$ if and only if it is the inverse limit of a surjective inverse system of powerful $p$-groups.
\end{proposition}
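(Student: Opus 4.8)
The plan is to prove both directions by pulling information back and forth between a powerful pro-$p$ group and its finite quotients, using Proposition~\ref{prop:prof_closed_subset} and the characterization that a pro-$p$ group is the inverse limit of its quotients by open normal subgroups.

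For the easy direction, suppose $G = \varprojlim G_i$ is the inverse limit of a surjective inverse system of powerful $p$-groups. Each $G_i$ is a finite $p$-group, so $G$ is pro-$p$. It remains to check $G/\overline{G^p}$ is abelian. Since each $G_i$ is powerful, $G_i/G_i^p$ is abelian, i.e. $[G_i,G_i] \leq G_i^p$. Let $\phi_i : G \to G_i$ be the projection. For $x,y \in G$ we have $\phi_i([x,y]) = [\phi_i(x),\phi_i(y)] \in [G_i,G_i] \leq G_i^p = \phi_i(G)^p = \phi_i(G^p)$, so $[x,y] \in \phi_i(G^p)\cdot \ker\phi_i \subseteq \overline{G^p}\cdot\ker\phi_i$ (using that $\phi_i(G^p) \subseteq \phi_i(\overline{G^p})$ since $\phi_i$ is continuous and $G_i$ discrete). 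As this holds for all $i$ and the $\ker\phi_i$ form a neighborhood basis of $1$, Proposition~\ref{prop:prof_closed_subset}(a) gives $[x,y] \in \overline{G^p}$. Hence $[G,G] \subseteq \overline{G^p}$, so $G/\overline{G^p}$ is abelian and $G$ is powerful.

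For the converse, suppose $G$ is a powerful pro-$p$ group. Write $G = \varprojlim G/N$ over $N \trianglelefteq_o G$; each $G/N$ is a finite $p$-group, the system is surjective, and it suffices to show each $G/N$ is powerful, i.e. that $(G/N)/(G/N)^p$ is abelian. Now $(G/N)^p = G^pN/N$, and the image of $\overline{G^p}$ in $G/N$ equals $\overline{G^p}N/N$; but $G^pN/N$ is already closed (a subgroup of a finite group), so since $G^pN \subseteq \overline{G^p}N$ and $\overline{G^p} \subseteq \overline{G^p N}= \overline{G^p}N$... more carefully: $\overline{G^p}N/N = \overline{G^pN/N} = G^pN/N$ because closures in the discrete quotient are trivial and $\overline{G^p}N = \overline{G^pN}$ as $N$ is open. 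Thus $(G/N)/(G/N)^p \cong G/(G^pN) = (G/\overline{G^p})\big/(G^pN/\overline{G^p})$, a quotient of the abelian group $G/\overline{G^p}$, hence abelian. Therefore each $G/N$ is a powerful $p$-group and $G$ is the inverse limit of the surjective system $\{G/N : N \trianglelefteq_o G\}$ of powerful $p$-groups.

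The only subtle point — and the step I would be most careful about — is the interchange of closure with the quotient map: that $\overline{G^p}N/N = \overline{G^p N}/N = \overline{G^p}$'s image, which rests on the general fact that for an open (hence closed) subgroup $N$ and any $X$, $\overline{X}N = \overline{XN}$, together with $\overline{G^p N}/N = G^p N/N$ since the quotient is discrete. Once this is in hand, both directions are routine. One should also note explicitly that powerfulness of $G$ is exactly $[G,G] \subseteq \overline{G^p}$, as recorded after the definition ($G$ embeds powerfully into itself iff $G/\overline{G^p}$ is abelian).
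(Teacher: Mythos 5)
The paper does not prove this statement; it simply cites Corollary 3.3 of \cite{dixon}, so there is no in-text argument to compare against. Your proof is correct and is essentially the standard one: both directions reduce to the description $\overline{G^p} = \bigcap_{N \trianglelefteq_o G} G^p N$ from Proposition~\ref{prop:prof_closed_subset}(a), together with surjectivity of the transition maps. Two small points of presentation: in the easy direction the expression ``$[x,y] \in \phi_i(G^p)\cdot\ker\phi_i$'' should read $[x,y]\in\phi_i^{-1}(\phi_i(G^p)) = G^p\ker\phi_i$, and the parenthetical about $\phi_i(G^p)\subseteq\phi_i(\overline{G^p})$ is not needed for the step being taken --- what you actually use is that the $\ker\phi_i$ are cofinal among open normal subgroups, so $[x,y]\in G^p\ker\phi_i$ for all $i$ already gives $[x,y]\in\bigcap_{N\trianglelefteq_o G}G^pN = \overline{G^p}$. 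In the converse, the cleanest route to $\overline{G^p}N = G^pN$ is simply that $G^pN$ is open (it contains $N$), hence closed, hence contains $\overline{G^p}$; your $\overline{X}N = \overline{XN}$ argument is also fine but slightly more roundabout.
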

Using this connection it is not hard to see that the following results carry over from the finite to the profinite situation:

\begin{theorem}[Theorem 3.6 of \cite{dixon}] \label{powerful_thm}\label{powerful_product}
	Suppose $G = \overline{\langle a_1 , \dots, a_d \rangle }$ is a finitely generated powerful pro-$p$ group and $i \geq 0$.
	Then the following hold:
	\begin{itemize}
		\item[(a)] $G_i \pe G$.
		\item[(b)] $G_{i+k} = P_{k+1}(G_i) = G_i^{p^k}$ for all $k \geq 0$. In particular $\Phi(G_i) = G_{i+1}$.
		\item[(c)] $G_i = G^{p^{i-1}} = \{ x^{p^{i-1}} : x \in G \} = \overline{ \langle a_1, \dots , a_d \rangle }$.
		\item[(d)] The map $x \mapsto x^{p^k}$ induces a surjective continuous homomorphism $G_i/G_{i+1} \rightarrow G_{i+k}/G_{i+k+1}$ for every $k \geq 0$. 
		\item[(e)] $G = \overline{ \langle a_1 \rangle} \cdots \overline{ \langle a_d \rangle}$.
	\end{itemize}
\end{theorem}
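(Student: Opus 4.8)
The plan is to reduce every assertion to the corresponding statement for finite powerful $p$-groups---Theorem~\ref{thm:finite_powerful} and Corollary~\ref{cor:finite_powerful_product}---and then to transport it to the inverse limit $G = \varprojlim_{N\trianglelefteq_o G} G/N$. Each quotient $G/N$ is a powerful finite $p$-group generated by $a_1N,\dots,a_dN$: powerfulness of $G/N$ follows since $(G/N)\big/(G/N)^p$ is a quotient of the abelian group $G/\overline{G^p}$ (or directly from Corollary~3.3 of \cite{dixon}). By Proposition~1.16 of \cite{dixon} the lower $p$-series commutes with quotients, $P_j(G/N) = G_jN/N$, and, since $G$ is finitely generated, each $G_j = P_j(G)$ is open, hence closed, in $G$; likewise every open subgroup of $G$ is finitely generated by Proposition~\ref{prop:fin_gen_fin_subgps}. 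The one recurring point to watch is that the ``power'' subgroups $G_i^{p^k}$ are honestly closed: the set $G_i^{\{p^k\}}=\{x^{p^k}:x\in G_i\}$ is the continuous image of the compact group $G_i$, hence compact and closed, and by the finite theory it equals the subgroup $G_i^{p^k}$ in every quotient $G/N$, so $G_i^{p^k}=G_i^{\{p^k\}}$ is closed.

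First I would prove (a). In each $G/N$ we have $G_iN/N = P_i(G/N)\pe G/N$, that is $[G_i,G]N/N \le (G_iN/N)^p = G_i^pN/N$; intersecting over all $N$ and using Proposition~\ref{prop:limit_closure}(a) gives $[G_i,G] \le \bigcap_N G_i^pN = \overline{G_i^p}$, so $G_i\pe G$. In particular each $G_i$ is powerful, since $[G_i,G_i]\le[G_i,G]\le\overline{G_i^p}$. For (b) and (c), I apply Theorem~\ref{thm:finite_powerful} inside $G/N$ to the finitely generated powerful group $G_iN/N$: this yields $P_{k+1}(G_i)N/N = G_i^{p^k}N/N$ and $P_{k+1}(G_i)N/N = G_{i+k}N/N$, and intersecting over $N$ (again via Proposition~\ref{prop:limit_closure}(a), all the relevant subgroups being closed by the remark above) gives $P_{k+1}(G_i) = G_i^{p^k} = G_{i+k}$, hence also $\Phi(G_i)=P_2(G_i)=G_{i+1}$. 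Specializing to $i=1$ gives $G_i = G^{p^{i-1}} = \{x^{p^{i-1}}:x\in G\}$, and the generating-set description $G_i = \overline{\langle a_1^{p^{i-1}},\dots,a_d^{p^{i-1}}\rangle}$ follows from Proposition~\ref{prop:gen_set}, since the images $a_1^{p^{i-1}}N,\dots,a_d^{p^{i-1}}N$ generate $G_iN/N = P_i(G/N)$ in every quotient.

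For (d), the map $x\mapsto x^{p^k}$ is continuous and, by (b), sends $G_i$ into $G_{i+k}$ and $G_{i+1}$ into $G_{i+k+1}$, so it induces a continuous map $G_i/G_{i+1}\to G_{i+k}/G_{i+k+1}$; it is a homomorphism because the congruence $(xy)^{p^k}\equiv x^{p^k}y^{p^k}$ holds modulo $G_{i+k+1}N$ for every $N$ (Theorem~\ref{thm:finite_powerful}(c) in $G/N$) and $\bigcap_N G_{i+k+1}N = G_{i+k+1}$, and it is surjective because $G_{i+k}=\{x^{p^k}:x\in G_i\}$ by (b)/(c). For (e), Corollary~\ref{cor:finite_powerful_product} gives $G/N = \langle a_1N\rangle\cdots\langle a_dN\rangle$ for every $N$; the set $C = \overline{\langle a_1\rangle}\cdots\overline{\langle a_d\rangle}$ is a product of finitely many compact sets, hence compact and closed, and $CN/N = G/N$ for all $N$, so $C = \bigcap_N CN = \overline{C} = G$ by Proposition~\ref{prop:limit_closure}(a). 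The main obstacle is exactly this closure bookkeeping: before pulling back a finite identity $XN/N = YN/N$ to $X = Y$ one must know both $X$ and $Y$ are closed, and the crucial input each time is that a continuous image of a compact set is closed, together with the fact, from the finite powerful theory, that these sets of $p^k$-th powers are already subgroups.
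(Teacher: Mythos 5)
Your proof is correct and takes exactly the route the paper indicates (``Using this connection it is not hard to see that the following results carry over from the finite to the profinite situation''): reduce each assertion to Theorem~\ref{thm:finite_powerful} and Corollary~\ref{cor:finite_powerful_product} in the finite quotients $G/N$, and pull back via Proposition~\ref{prop:limit_closure}, with the closure bookkeeping you carefully supply (compactness of $G_i^{\{p^k\}}$ and of the product $\overline{\langle a_1\rangle}\cdots\overline{\langle a_d\rangle}$) being precisely what makes the transfer go through. The paper omits the details, and your write-up fills them in faithfully, including the sandwich $G_i^{\{p^k\}}\subseteq G_i^{p^k}\subseteq\overline{G_i^{p^k}}=G_i^{\{p^k\}}$ needed to drop the closure bars.
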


\subsection{Powerful pro-$p$ groups and finite rank}

If $G$ is a powerful $p$-group and $H\leq G$, then $d(H) \leq d(G)$ (cp Theorem 2.9 of \cite{dixon}). This transfers to closed subgroups of profinite groups by  
Proposition~\ref{prop:gen_set} 
and yields in particular that  every finitely generated powerful pro-$p$ group has (indeed) finite rank:

\begin{theorem}[Theorem 3.8 of \cite{dixon}]\label{t:finite_rank}
	Suppose $G$ is a finitely generated powerful pro-$p$ group and $H \leq_c G$ is a closed subgroup. Then $d(H) \leq d(G)$. In particular, $G$ has finite rank.
\end{theorem}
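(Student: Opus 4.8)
The plan is to reduce the statement about closed subgroups of a finitely generated powerful pro-$p$ group to the corresponding statement about finite powerful $p$-groups, using the approximation results from Section~1. First I would recall the cited finite fact: if $P$ is a powerful $p$-group and $Q \leq P$, then $d(Q) \leq d(P)$ (Theorem~2.9 of \cite{dixon}). The idea is to push this through the inverse system presenting $G$. Concretely, let $d = d(G)$ and let $H \leq_c G$ be closed. For every open normal subgroup $N \trianglelefteq_o G$ the quotient $G/N$ is a finite $p$-group, and since $G$ is powerful so is $G/N$ (the image of a powerful group under a continuous surjection is powerful, as $G/\overline{G^p}$ abelian forces $(G/N)/(G/N)^p$ abelian). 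Hence $HN/N \leq G/N$ is a subgroup of a powerful $p$-group, so by the finite fact $d(HN/N) \leq d(G/N) \leq d(G) = d$.

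Now I would invoke Proposition~\ref{prop:gen_set}(b): if $\gpdim(HN/N) \leq d$ for all $N \trianglelefteq_o G$, then $\gpdim(H) \leq d$. This gives $d(H) \leq d(G)$ immediately. For the "in particular" clause, note that every open subgroup $H \leq_o G$ is in particular closed, so $\gpdim(H) \leq d(G)$ for all such $H$; taking the supremum over open subgroups gives $\rk(G) \leq d(G) < \infty$ by the definition of rank.

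The only real point requiring a little care — the "main obstacle," such as it is — is verifying that the property of being powerful is inherited by the finite quotients $G/N$, so that the finite-group theorem genuinely applies. This follows from the definition: $G$ powerful means $[G,G] \leq \overline{G^p}$, and applying the continuous projection $\pi_N : G \to G/N$ gives $[G/N, G/N] = \pi_N([G,G]) \leq \pi_N(\overline{G^p}) \leq \overline{\pi_N(G^p)} = (G/N)^p$ (the last closure being vacuous in the finite group $G/N$), which is exactly the condition that $G/N$ is a powerful $p$-group (using $p \neq 2$, as assumed throughout this section). Everything else is a direct application of Proposition~\ref{prop:gen_set}(b) and the definition of rank; no genuinely new argument is needed.
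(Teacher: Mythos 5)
Your proof is correct and takes the same route the paper sketches: apply the finite fact ($d(Q) \leq d(P)$ for $Q \leq P$ powerful) to the quotients $HN/N \leq G/N$, then lift via Proposition~\ref{prop:gen_set}(b). You simply make explicit the two small checks the paper leaves implicit (that $G/N$ remains powerful and that $d(G/N) \leq d(G)$), both of which you handle correctly.
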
 

We next show a kind of converse, namely every pro-$p$ group of finite rank has a characteristic open powerful subgroup. Here is the candidate:

\begin{definition}
	Given a finite (or profinite) $p$-group $G$, we define
	\[ V(G,r) = \bigcap \{\ \ker f \ |\ f:G \rightarrow GL_r(\mathbb{F}_p) \text{ is a homomorphism } \}. \]
\end{definition}

Note that if $G$ is a finitely generated pro-$p$ group and $r$ is a positive integer, then $V(G,r)$ is an open characteristic subgroup of $G$ because every homomorphism $f:G \rightarrow GL_r(\F_p)$ is continuous by Theorem~\ref{thm:serre} and there are only finitely many such homomorphisms by Proposition~\ref{prop:fin_gen_fin_subgps}.

The nice thing about $V(G,r)$ is the fact that for any finite $p$-group $G$ any normal subgroup $N$ with  $\gpdim(N) \leq r$ and $N \leq V$, is powerfully embedded in $V$ (cp. Proposition 2.12 of \cite{dixon}). This transfers to the profinite setting as:

\begin{proposition}[Proposition 3.9 of \cite{dixon}]\label{powerful_char}
	Suppose $G$ is a finitely generated pro-$p$ group, $r$ is a positive integer and $N \triangleleft_o G$ is an open normal subgroup with $\gpdim(N) \leq r$.
	Set $V = V(G,r)$. If  $N \leq V$, then $N \pe V$. In particular, $V$ itself is powerful.
\end{proposition}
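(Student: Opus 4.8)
\medskip

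The plan is to deduce the proposition from its finite counterpart, Proposition 2.12 of \cite{dixon}, by working in the finite quotients $G/M$ with $M\trianglelefteq_o G$. Recall that $N\pe V$ means precisely $[N,V]\subseteq\overline{N^p}$, and by Proposition~\ref{prop:limit_closure}(a) one has $\overline{N^p}=\bigcap_{M\trianglelefteq_o G}N^pM$. So it suffices to fix $M\trianglelefteq_o G$ and show $[N,V]\subseteq N^pM$. Writing $\overline H:=HM/M$ for the image of a subgroup $H\leq G$ in the finite $p$-group $\overline G:=G/M$, and using that the natural projection carries $[N,V]$ onto $[\overline N,\overline V]$, this is the statement that $[\overline N,\overline V]\leq(\overline N)^p$ inside $\overline G$.

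To invoke Proposition 2.12 of \cite{dixon} in $\overline G$ I would check three things. First, $\overline N$ is normal in $\overline G$ (because $N\trianglelefteq G$) and $\gpdim(\overline N)\leq\gpdim(N)\leq r$, since $\overline N$ is a quotient of $N$. Second, $(\overline N)^p=N^pM/M$, since the image of the subgroup generated by $p$-th powers of $N$ is the subgroup generated by $p$-th powers of $\overline N$. Third — the key point — $\overline V\leq V(\overline G,r)$: every homomorphism $\overline f\colon\overline G\to GL_r(\F_p)$ yields, by composition with $G\twoheadrightarrow\overline G$, a homomorphism $G\to GL_r(\F_p)$, so $V=V(G,r)$ is contained in the preimage of $\ker\overline f$; intersecting over all such $\overline f$ gives $VM/M\leq V(\overline G,r)$. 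Granting these, $\overline N\leq\overline V\leq V(\overline G,r)$, $\overline N\trianglelefteq\overline G$ and $\gpdim(\overline N)\leq r$, so Proposition 2.12 of \cite{dixon} gives $\overline N\pe V(\overline G,r)$, i.e. $[\overline N,V(\overline G,r)]\leq(\overline N)^p$; in particular $[\overline N,\overline V]\leq(\overline N)^p=N^pM/M$. Letting $M$ range over all open normal subgroups of $G$ then yields $[N,V]\subseteq\overline{N^p}$, that is, $N\pe V$.

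For the last assertion, recall that $V=V(G,r)$ is an open characteristic subgroup of $G$ (every homomorphism $G\to GL_r(\F_p)$ is continuous by Theorem~\ref{thm:serre}, and there are only finitely many of them by Proposition~\ref{prop:fin_gen_fin_subgps}). Being open in $G$, $V$ satisfies $\gpdim(V)\leq\rk(G)$, so whenever $\rk(G)\leq r$ — the situation in which this proposition is applied — $V$ is itself an admissible choice for the subgroup ``$N$''. Applying the first part of the proof with $N$ replaced by $V$ then gives $V\pe V$, i.e. $[V,V]\subseteq\overline{V^p}$, so $V/\overline{V^p}$ is abelian and $V$ is powerful.

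The only step that is not pure bookkeeping is the third point above, namely that the image of $V(G,r)$ in a finite quotient $G/M$ lies in $V(G/M,r)$; this is exactly the observation that every $GL_r(\F_p)$-representation of the quotient lifts to one of $G$. The remaining ingredients — that $\gpdim$, $p$-th powers and commutators pass correctly through $G\to G/M$, and that closed subgroups are intersections of open ones (Proposition~\ref{prop:limit_closure}) — are routine, and the substantive input is the already-quoted finite Proposition 2.12 of \cite{dixon}.
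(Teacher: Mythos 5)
Your proof is correct and uses exactly the reduction the paper gestures at when it says the finite result ``transfers to the profinite setting'': pass to the finite quotients $G/M$, verify that $\overline N$ is normal with $\gpdim(\overline N)\le r$, that $(\overline N)^p = N^pM/M$, and that $\overline V\le V(\overline G,r)$ (by lifting representations along $G\twoheadrightarrow \overline G$), then apply Proposition~2.12 of \cite{dixon} and intersect over all $M\trianglelefteq_o G$ using Proposition~\ref{prop:limit_closure}. You also correctly notice that, as stated, the final clause ``in particular, $V$ itself is powerful'' does not follow from the listed hypotheses: one needs $\gpdim(V)\le r$ so that the first part applies with $N=V$, which is guaranteed precisely under the intended standing assumption $\rk(G)\le r$ (as in the source, and as used in the paper's Corollary on uniform open subgroups, where $r$ may be taken to be $\rk(G)$).
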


\subsection{Uniform pro-$p$ groups}
In order to further continue the analogy  between a powerful and an abelian pro-$p$ group we would like to have that the elementary abelian $p$-groups arising as quotients of the lower $p$-series all have the same $\F_p$-dimension. We follow Section 4.1 of \cite{dixon}.
 
\begin{definition}
	A pro-$p$ group $G$ is \textit{uniformly powerful} (or \textit{uniform}) if
	\begin{itemize}
		\item[(a)] $G$ is finitely generated,
		\item[(b)] $G$ is powerful, and
		\item[(c)] for each $i\geq 1$, the map $x \mapsto x^p$ induces an isomorphism
			$G_i/G_{i+1} \rightarrow G_{i+1}/G_{i+2}$.
	\end{itemize}
\end{definition}

\begin{theorem}[Theorem 4.2 of \cite{dixon}]
	Let $G$ be a finitely generated powerful pro-$p$ group. Then $P_k(G)$ is uniform for all sufficiently large $k$.
\end{theorem}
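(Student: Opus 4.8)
The plan is to reduce uniformity of $P_k(G)$ to a statement about the $\F_p$-dimensions of the successive quotients $G_m/G_{m+1}$ of the lower $p$-series, and then to use that a non-increasing sequence of natural numbers is eventually constant.

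First I would check that for every $k \geq 1$ the group $P_k(G) = G_k$ is itself a finitely generated powerful pro-$p$ group whose lower $p$-series is the tail $(G_{k+j-1})_{j \geq 1}$. Indeed, Theorem~\ref{powerful_thm}(a) gives $G_k \pe G$, so $G_k$ is powerful; Theorem~\ref{t:finite_rank} gives $d(G_k) \leq d(G) < \infty$, so $G_k$ is finitely generated; and Theorem~\ref{powerful_thm}(b), applied inside $G$, gives both $\Phi(G_i) = G_{i+1}$ for every $i$ and $P_j(G_k) = G_{k+j-1}$ for every $j \geq 1$. Hence conditions (a) and (b) in the definition of uniformity are automatic for every $P_k(G)$, and only condition (c) is at issue: I must find $k$ such that for all $i \geq 1$ the $p$-power map induces an \emph{isomorphism} $P_i(G_k)/P_{i+1}(G_k) \to P_{i+1}(G_k)/P_{i+2}(G_k)$, which through the identification $P_j(G_k) = G_{k+j-1}$ is the same as asking that $x \mapsto x^p$ induce an isomorphism $G_m/G_{m+1} \to G_{m+1}/G_{m+2}$ for every $m \geq k$.

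Next I would set $Q_m = G_m/G_{m+1}$ and $d_m = \dim_{\F_p} Q_m$. Since $G_m$ is finitely generated and $G_{m+1} = \Phi(G_m)$, the group $Q_m$ is finite (Corollary~\ref{cor:pro_p_frattini_open}) and is an $\F_p$-vector space (Lemma~\ref{lem:pro_p_frattini}(b)). By Theorem~\ref{powerful_thm}(d), with the exponent equal to $p$, the map $x \mapsto x^p$ induces a \emph{surjective} homomorphism $\phi_m \colon Q_m \twoheadrightarrow Q_{m+1}$, so $d_{m+1} \leq d_m$. Thus $(d_m)_{m \geq 1}$ is non-increasing in $\N$, hence eventually constant; choose $k$ with $d_m = d_k$ for all $m \geq k$.

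Finally, for every $m \geq k$ the surjection $\phi_m$ is a map between $\F_p$-vector spaces of the same finite dimension, hence an isomorphism; by the first step this says precisely that $P_k(G)$ satisfies condition (c), so $P_k(G)$ is uniform, and the same then holds for every $P_{k'}(G)$ with $k' \geq k$. I do not expect a real obstacle here. The only points needing care are the index bookkeeping $P_j(G_k) = G_{k+j-1}$, and the observation that the $p$-power map occurring in condition (c) for $G_k$ is literally the one supplied by Theorem~\ref{powerful_thm}(d) for $G$, so that its surjectivity comes for free and only the dimension count must be performed. One should also note separately the degenerate case $d_k = 0$, i.e. $P_k(G) = 1$, in which $P_k(G)$ is vacuously uniform.
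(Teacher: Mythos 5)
Your proof is correct and follows essentially the same route as the paper: compare the $\F_p$-dimensions $d_m = \dim G_m/G_{m+1}$, note they are non-increasing because the $p$-power map is a surjection (Theorem~\ref{powerful_thm}(d)), and conclude that once the sequence stabilizes the surjection is an isomorphism. The paper's version is terser and leaves the verifications you make explicit (finite generation and powerfulness of $G_k$, the index bookkeeping $P_j(G_k)=G_{k+j-1}$, the degenerate case) to the reader, so your additional care is a welcome but not a conceptual difference.
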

\begin{proof}
	Write $p^{d_i} = | G_i : G_{i+1} |$ for all $i \geq 1$. As the map $x \mapsto x^p$ induces an epimorphism
	$G_i / G_{i+1} \twoheadrightarrow G_{i+1} / G_{i+2}$, we have $d_1 \geq d_2 \geq d_3 \geq \dots$ and hence there is some $m$ such that $d_i = d_m$ for all $i \geq m$. Then the group $G_k$ is uniform by Theorem~\ref{powerful_thm}.
\end{proof}

\begin{corollary}[Corollary 4.3 of \cite{dixon}]
	Any pro-$p$ group $G$  of finite rank  has a uniform characteristic open subgroup.
\end{corollary}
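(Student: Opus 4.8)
The plan is to write down an explicit candidate and then feed it into the results of this subsection. First I would observe that $G$ is finitely generated: since $G\leq_o G$ we have $\gpdim(G)\leq\rk(G)<\infty$. So set $r=\rk(G)$ and $V=V(G,r)$. By the remark following the definition of $V(G,r)$, the subgroup $V$ is open and topologically characteristic in $G$ (this uses that $G$ is finitely generated, so that there are only finitely many homomorphisms $G\to GL_r(\F_p)$, each continuous by Theorem~\ref{thm:serre}); in particular $V\trianglelefteq_o G$.

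Next I would show that $V$ is powerful. Since $V\leq_o G$ we have $\gpdim(V)\leq\rk(G)=r$, so Proposition~\ref{powerful_char} applies with the open normal subgroup $N=V$ (trivially $V\leq V$) and yields $V\pe V$; as a pro-$p$ group is powerful exactly when it embeds powerfully into itself, $V$ is powerful. This is precisely where the choice $r=\rk(G)$ is used: it is what makes $V$ itself an admissible value of $N$ in Proposition~\ref{powerful_char}. Moreover $V$ is finitely generated by Proposition~\ref{prop:fin_gen_fin_subgps}, being an open subgroup of the finitely generated profinite group $G$.

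Now I would apply the previous theorem (Theorem~4.2 of \cite{dixon}) to the finitely generated powerful pro-$p$ group $V$: there is an integer $k$ with $P_k(V)$ uniform; fix such a $k$ and put $U=P_k(V)$. Since $V$ is a finitely generated pro-$p$ group, the lower $p$-series $\{P_i(V):i\geq 1\}$ is a basis of open neighbourhoods of $1$ in $V$ (Proposition~1.16 of \cite{dixon}), so $U\leq_o V$ and hence $U\leq_o G$. Finally, $U$ is topologically characteristic in $G$: any continuous automorphism $\gamma$ of $G$ restricts to a continuous automorphism of $V$ because $V$ is characteristic, and every term of the lower $p$-series of $V$ is topologically characteristic in $V$ (the remark after Definition~\ref{lower_p}), so $\gamma(U)=U$. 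Thus $U$ is a uniform characteristic open subgroup of $G$.

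There is no serious obstacle here once the candidate $P_k\!\left(V(G,\rk G)\right)$ has been written down; the only two points needing a little attention are the bookkeeping that $r=\rk(G)$ is the choice which makes $V$ powerful via Proposition~\ref{powerful_char}, and the (topological) transitivity of ``characteristic'' used in the last step.
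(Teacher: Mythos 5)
Your argument is correct and follows the same route as the paper: take a powerful open characteristic subgroup via Proposition~\ref{powerful_char} and then pass to $P_k$ for large $k$ using Theorem~4.2. You add a welcome amount of bookkeeping that the paper suppresses, in particular the observation that taking $r = \rk(G)$ is exactly what makes $V(G,r)$ an admissible value of $N$ in Proposition~\ref{powerful_char} (so that the ``in particular, $V$ is powerful'' clause applies), and the check that characteristicity passes from $V$ in $G$ and from $P_k(V)$ in $V$ to $P_k(V)$ in $G$.
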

\begin{proof}
	$G$ has a powerful characteristic open subgroup $H$ by Proposition~\ref{powerful_char}. Then, by the above theorem, there is $k \geq 1$ such that $H_k$ is uniform. $H_k$ is an open characteristic subgroup of $H$ and hence an open characteristic subgroup of $G$.
\end{proof}
In fact, one can now see that a pro-$p$ group $G$ has finite rank if and only if it has an open uniform subgroup $N$ since 	$\rk(G) \leq \rk(N) + \rk(G/N).$

\subsection{$\mathbb{Z}_p$-action and good bases}
By Theorem~\ref{powerful_product} every uniform pro-$p$ group $G$ is a product of procyclic subgroups. We show that this gives rise to a homeomorphism $\Z_p^d \rightarrow G$ where $d = \gpdim(G)$. We use this to define \emph{good bases} for open subgroups of $G$. These are special generating sets which were introduced by du Sautoy in \cite{sautoy} and allow a kind of 'stratification' of the underlying group. They will be used in Section 5 to prove that the open subgroups of a uniform pro-$p$ group are uniformly definable in $\Z_p^\text{an}$.

We start with defining a $\Z_p$-action on a pro-$p$ group $G$
\begin{definition}
	Let $G$ be a pro-$p$ group, $g \in G$ and $\lambda \in \Z_p$. Then $g^\lambda$ is the limit of $(g^{a_i})$ in $G$, where $(a_i)$ is a sequence of integers that converges to $\lambda$ in $\Z_p$.
\end{definition}

It is not hard to see that this is well-defined, i.e. if $(a_i),(b_i)$ are two sequences of integers that have the same limit in $\Z_p$, then for any $g \in G$ the sequences $(g^{a_i})$ and $(g^{b_i})$ converge and have the same limit	in $G$. This definition extends in a natural way to an action of the ring $\Z_p$ and  turns $G$ into a (topological) $\Z_p$-module:

\begin{proposition}[Proposition 1.26 of \cite{dixon}]
	Suppose $G$ is a pro-$p$ group, $g,h \in G$ and $\lambda, \mu \in \Z_p$.
	\begin{itemize}
		\item[(a)] $g^{\lambda + \mu} = g^\lambda g^\mu$ and $g^{\lambda \mu} = (g^\lambda)^\mu$.
		\item[(b)] If $gh = hg$ then $(gh)^\lambda = g^\lambda h^\lambda$.
		\item[(c)] The map $\nu \mapsto g^\nu$ is a continuous homomorphism from $\Z_p$ to $G$ whose image is the closure of $\langle g \rangle$ in $G$.
	\end{itemize}
\end{proposition}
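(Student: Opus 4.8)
The plan is to reduce every assertion to the behaviour of $g$ in the finite $p$-group quotients $G/N$, $N\trianglelefteq_o G$, exploiting that these form a neighbourhood basis of $1$ and that $G=\varprojlim_N G/N$. First I would make the well-definedness implicit in the definition explicit. Fix $g\in G$ and a sequence of integers $(a_i)$ with $a_i\to\lambda$ in $\Z_p$. For each $N\trianglelefteq_o G$ the element $gN$ has some order $p^{k}$ in the finite $p$-group $G/N$, and a convergent sequence in $\Z_p$ has eventually constant residue modulo $p^{k}$; hence $g^{a_i}N$ is eventually constant. As this holds for all $N$, the sequence $(g^{a_i})$ is Cauchy with respect to the natural uniformity on $G$ and therefore converges; running the same argument for two integer sequences with the same limit shows the limit depends only on $\lambda$. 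Taking $a_i$ constant shows $g^\nu$ agrees with the usual power for $\nu\in\Z$, and $g^{0}=1$.

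Next I would obtain the additive law in (a) and statement (b) by pushing sequences through continuous operations. Choosing integer sequences $a_i\to\lambda$ and $b_i\to\mu$, we have $a_i+b_i\to\lambda+\mu$ and $g^{a_i+b_i}=g^{a_i}g^{b_i}$ since powers of one element commute; continuity of multiplication in $G$ gives $g^{\lambda+\mu}=g^\lambda g^\mu$, and in particular $g^{-\lambda}=(g^\lambda)^{-1}$. Likewise, if $gh=hg$ then $(gh)^{a_i}=g^{a_i}h^{a_i}$ for all $i$, and passing to the limit yields $(gh)^\lambda=g^\lambda h^\lambda$.

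Then I would prove continuity in (c) before the multiplicative law, to avoid circularity. Fixing $N\trianglelefteq_o G$ with $gN$ of order $p^{k}$, the argument of the first paragraph shows $g^\nu N=g^{a_i}N$ for large $i$ whenever $a_i\to\nu$, so $g^\nu N$ depends only on $\nu$ modulo $p^{k}\Z_p$; thus $\nu\mapsto g^\nu N$ factors through the continuous reduction $\Z_p\to\Z/p^{k}\Z$ followed by $n\mapsto g^nN$, and since this holds for all $N$ the map $\Z_p\to G$ is continuous. Its image is then compact, hence a closed subgroup; it contains $\langle g\rangle$ and, by density of $\Z$ in $\Z_p$ together with continuity, is contained in $\overline{\langle g\rangle}$, so it equals $\overline{\langle g\rangle}$. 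Finally, for the multiplicative law put $h=g^\lambda$ and pick integers $b_i\to\mu$; an induction on $b_i$ using the additive law (and $g^{-\lambda}=(g^\lambda)^{-1}$ for negative exponents) gives $h^{b_i}=g^{\lambda b_i}$, and $\lambda b_i\to\lambda\mu$, so continuity of $\nu\mapsto g^\nu$ yields $h^\mu=\lim g^{\lambda b_i}=g^{\lambda\mu}$. None of this is deep; the only care needed is the bookkeeping in the well-definedness and continuity steps — consistently tracking which $p$-power exponent controls each quotient $G/N$ — and ordering the argument so that continuity is established before the multiplicative identity is invoked.
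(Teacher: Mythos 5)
Your proposal is correct, and it is organized around the same underlying idea as the paper — reduce everything to the finite quotients $G/N$, where the order of $gN$ is a power of $p$ and $\Z_p$-exponents collapse to residues modulo that power — but the details are arranged differently. The paper simply asserts that (a) and (b) hold in each $G/N$ (hence in $G$), then observes that (a) makes $\nu\mapsto g^\nu$ a homomorphism and appeals to the fact that every (abstract) homomorphism with domain $\Z_p$ into a profinite group is automatically continuous, a consequence of Serre's theorem (Theorem~\ref{thm:serre}) applied to $\Z_p$; it finishes with the compactness argument for the image being closed and equal to $\overline{\langle g\rangle}$. You instead prove continuity directly, showing $\nu\mapsto g^\nu N$ factors through $\Z/p^k\Z$, and then derive the multiplicative law $g^{\lambda\mu}=(g^\lambda)^\mu$ from continuity rather than from the quotients. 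Your version is more elementary and self-contained (no appeal to automatic continuity or to Serre's theorem), and you sensibly sequence the argument so that continuity is in hand before the multiplicative identity uses it; the paper's version is shorter once one grants Serre's theorem, which it has already established earlier in the exposition. Both are sound; yours also makes the well-definedness step explicit, which the paper treats as settled prior to the proposition.
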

\begin{proof}
	Note that (a) and (b) hold in $G/N$ for all open normal subgroups $N \trianglelefteq_o G$. Hence they hold in $G$.	
	(a) implies that $\nu \mapsto g^\nu$ is a homomorphism. Every homomorphism with domain $\Z_p$ is continuous.
	$g^{\Z_p}$ is the image of a compact group, hence compact and thus closed. Clearly $\langle g \rangle \subseteq g^{\Z_p}$.
	Each element of $g^{Z_p}$ is the limit of a sequence in $\langle g \rangle$ and therefore $g^{\Z_p} \subseteq \overline{\langle g \rangle}$.
	Hence $g^{\Z_p} = \overline{\langle g \rangle}$. 
\end{proof}

Let $G$ be a uniform pro-$p$ group. Recall that the map $x \mapsto x^p$ induces an isomorphism
\[ f_i : G_i / G_{i+1} \rightarrow G_{i+1} / G_{i+2} \]
for all $i$. Moreover, by Theorem~\ref{powerful_thm} we have $G_{i+1} = \Phi(G_i)$ for all $i$ and hence the quotient
$G_i/G_{i+1}$ is a $\gpdim(G)$ dimensional $\F_p$-vector space.

\begin{proposition}[Theorem 4.9 of \cite{dixon}] \label{free_module}
	Suppose $G$ is a uniform pro-$p$ group with $d = \gpdim(G)$ and $\{a_1, \dots, a_d \}$ is a generating set for $G$. Then the map
	\[ \psi: \Z_p^d \rightarrow G, (\lambda_1, \dots, \lambda_d) \mapsto a_1^{\lambda_1} \cdots a_d^{\lambda_d} \]
	is a homeomorphism.
\end{proposition}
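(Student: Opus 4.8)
The plan is to exploit the ``stratification'' of $G$ by the lower $p$-series together with the isomorphisms $f_i\colon G_i/G_{i+1}\to G_{i+1}/G_{i+2}$, arguing that $\psi$ is a continuous bijection between compact Hausdorff spaces and hence a homeomorphism. Continuity of $\psi$ is immediate from the continuity of the $\Z_p$-action (the previous proposition) and of multiplication in $G$, and $\Z_p^d$ is compact while $G$ is Hausdorff, so it suffices to prove that $\psi$ is a bijection. Surjectivity comes from Theorem~\ref{powerful_product}(e): since $G=\overline{\langle a_1\rangle}\cdots\overline{\langle a_d\rangle}$ and each $\overline{\langle a_j\rangle}=a_j^{\Z_p}$ by the previous proposition, every element of $G$ has the form $a_1^{\lambda_1}\cdots a_d^{\lambda_d}$.

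For injectivity I would argue ``level by level'' down the lower $p$-series. The key observation is that $G_k=G^{p^{k-1}}$ by Theorem~\ref{powerful_thm}(c), and more precisely, under $\psi$ one has $\psi\big((p^{k-1}\Z_p)^d\big)\subseteq G_k$: indeed $a_j^{p^{k-1}\mu_j}=(a_j^{\mu_j})^{p^{k-1}}\in G^{p^{k-1}}=G_k$, and conversely the images $a_1^{p^{k-1}}G_{k+1},\dots,a_d^{p^{k-1}}G_{k+1}$ generate $G_k/G_{k+1}$ (apply the iterated surjection of Theorem~\ref{powerful_thm}(d) to the basis $a_1G_2,\dots,a_dG_2$ of $G_1/G_2$, which is a basis because $G/G_2=G/\Phi(G)$ is $d$-dimensional and $a_1,\dots,a_d$ generate). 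Since $G_k/G_{k+1}$ has $\F_p$-dimension exactly $d$ (uniformity plus $G_{k+1}=\Phi(G_k)$), these $d$ elements are in fact an $\F_p$-basis of $G_k/G_{k+1}$. Now suppose $\psi(\lambda)=\psi(\lambda')$. Working modulo $G_2$: in the abelian quotient $G/G_2$ we get $\sum_j(\lambda_j-\lambda'_j)\,(a_jG_2)=0$, so $\lambda_j\equiv\lambda'_j\pmod p$ for all $j$. Replacing each generator's exponent, one reduces to the case $\lambda_j-\lambda'_j\in p\Z_p$, i.e.\ both tuples land in $G_2$; rescaling (using $G_2=G^p$ with the $p$-th power map) and iterating the same argument at level $G_k/G_{k+1}$ shows $\lambda_j\equiv\lambda'_j\pmod{p^k}$ for every $k$, whence $\lambda=\lambda'$.

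The main obstacle is making the inductive ``rescaling'' step precise, since $G$ is only powerful, not abelian, so the identity $\psi(\lambda)\psi(\lambda')^{-1}=\psi(\lambda-\lambda')$ fails in general and one cannot simply subtract tuples. The clean way around this is to phrase the induction as: for each $k$, the induced map $\bar\psi_k\colon(\Z/p^k)^d\to G/G_k$ obtained from $\psi$ is a bijection. One checks $\bar\psi_1$ is a bijection because $a_1G_2,\dots,a_dG_2$ is a basis of $G/\Phi(G)$ (here one also uses that in the powerful group $G/G_2$ a product $a_1^{c_1}\cdots a_d^{c_d}$ of generators-to-powers is trivial iff all $c_j\equiv0\bmod p$). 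For the inductive step, given $\bar\psi_k$ bijective one counts: $(\Z/p^{k+1})^d$ and $G/G_{k+1}$ both have $p^{d(k+1)}$ elements (the latter by $|G_i:G_{i+1}|=p^d$ for all $i$, from uniformity), the square relating $\bar\psi_{k+1}$ and $\bar\psi_k$ via the quotient $G_k/G_{k+1}$ commutes, and $\bar\psi_{k+1}$ restricted to the kernel $(p^k\Z/p^{k+1})^d$ is surjective onto $G_k/G_{k+1}$ by the basis statement above; a diagram chase then forces $\bar\psi_{k+1}$ to be bijective. Passing to the inverse limit over $k$ recovers injectivity of $\psi$, and combined with continuity, surjectivity, and compactness of $\Z_p^d$, we conclude $\psi$ is a homeomorphism.
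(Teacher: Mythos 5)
Your proposal follows essentially the same route as the paper: surjectivity of $\psi$ from Theorem~\ref{powerful_product}(e), and injectivity by showing the induced maps to the finite quotients $G/G_{k+1}$ are bijections, so that the exponents are determined modulo $p^k$ for every $k$. The paper does this by a one-line count: the products $a_1^{e_1}\cdots a_d^{e_d}G_{k+1}$ with $e_i\in\Z/p^k\Z$ exhaust $G/G_{k+1}$ by Corollary~\ref{cor:finite_powerful_product}, and since $|G:G_{k+1}|=p^{kd}$ (uniformity) the $p^{kd}$ tuples map bijectively onto the $p^{kd}$ cosets. You rediscover exactly this, but wrap it in an inductive diagram chase that is more than you need --- once you have surjectivity of $\bar\psi_k$ and the cardinality match, bijectivity is immediate, and the commutative square and the restriction to the kernel $G_k/G_{k+1}$ add no further information (they would be needed only if you wanted to avoid counting, but then you run into the non-commutativity issue you flagged and would have to invoke centrality of $G_{k+1}/G_{k+2}$ in $G/G_{k+2}$).

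One concrete slip: you write that $(\Z/p^{k+1})^d$ and $G/G_{k+1}$ both have $p^{d(k+1)}$ elements, but with the paper's convention $G_1=G$ (which your first paragraph explicitly uses via $G_k=G^{p^{k-1}}$), one has $|G/G_{k+1}|=\prod_{i=1}^k|G_i:G_{i+1}|=p^{kd}$, not $p^{(k+1)d}$. So the correct finite-level statement is that $(\Z/p^k)^d\to G/G_{k+1}$ is a bijection. Your ``clean'' paragraph silently shifts to the convention $G_0=G$, which is internally consistent there but contradicts your own earlier indexing. This is a notational inconsistency rather than a gap in the mathematics, and once corrected your argument coincides with the paper's.

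Finally, you do one thing the paper's proof glosses over: you explicitly address why the continuous bijection is a homeomorphism (compactness of $\Z_p^d$ and Hausdorffness of $G$). The paper defers this to a remark elsewhere, so it is a reasonable addition.
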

\begin{proof}
	Fix $a \in G$. By Theorem~\ref{powerful_product}, $G = \overline{ \langle a_1 \rangle} \cdots \overline{ \langle a_d \rangle}$ so, by the above proposition, we can find $\lambda_1, \dots, \lambda_d \in \Z_p$
	such that $a = a_1^{\lambda_1} \cdots a_d^{\lambda_d}$.
	It remains to show that these $\lambda_i$ are unique. Fix $k>0$ and consider the finite powerful group $G/G_{k+1}$.
	$G_{k+1}$ has index $p^{kd}$ in $G$ and by  Corollary~\ref{cor:finite_powerful_product}, we have
	\[ G/G_{k+1} = \langle a_1 G_{k+1} \rangle \cdots \langle a_d G_{k+1} \rangle. \]
	By Theorem~\ref{thm:finite_powerful}, each cyclic subgroup of $G/G_{k+1}$ has order at most $p^k$. Hence the groups $\langle a_i G_{k+1} \rangle$ have index exactly $p^k$ in $G/G_{k+1}$.
	We have $a_i^{p^k} \equiv 1 \pmod{G_{k+1}}$ and hence every element $bG_{k+1}$ can be written as
	\[ bG_{k+1} = a_1^{e_1} \cdots a_d^{e_d} G_{k+1} \text{ for some } e_i \in \Z/p^k\Z. \]
	But as $|G:G_{k+1}| = p^{kd}$, these $e_i$ must be uniquely determined.
	
	Hence the $\lambda_i$ are uniquely determined modulo $p^k$ for all $k$ and thus are uniquely determined.
\end{proof}

If $G$ is a uniform pro-$p$ group with $d = \gpdim(G)$ and $\lambda = (\lambda_1, \dots \lambda_d) \in \Z_p^d$, we denote the corresponding element of $G$ by $x(\lambda) \in G$. If $x \in G$ is an element of $G$, we denote the corresponding tuple $\lambda = (\lambda_1, \dots \lambda_d) \in \Z_p^d$ by $\lambda(x)$.

\begin{definition}
	Let $G$ be a pro-$p$ group. Then we define $\omega: G \rightarrow \N \cup \{\infty \}$ by $\omega(g) = n$ if $g \in P_n(G) \setminus P_{n+1}(G)$ and put $\omega(1) = \infty$.
\end{definition}
Note that $\omega(ab) \geq \min\{\omega(a), \omega(b)\}$ and $\omega(ab) = \omega(a)$ if $\omega(a) < \omega(b)$.

Let $\nu$ be the usual valuation on $\Z_p$. If $G$ is a uniform pro-$p$ group then $\omega$ is compatible with $\nu$.

\begin{lemma}
	Let $G$ be a uniform pro-$p$ group, $g \in G$, and $\lambda \in \Z_p$. Then
	\[ \omega(g^\lambda) = \omega(g) + \nu(\lambda). \]
\end{lemma}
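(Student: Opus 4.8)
The plan is to reduce the identity to two special cases — multiplying the exponent by a $p$-adic unit, and raising to a power $p^m$ — the whole point being that for a \emph{uniform} pro-$p$ group the surjections $G_i/G_{i+1}\to G_{i+1}/G_{i+2}$ induced by $x\mapsto x^p$ are actually isomorphisms. First I would clear the degenerate cases: if $g=1$ or $\lambda=0$ then $g^\lambda=1$ and, with the convention $\infty+\nu(\lambda)=\infty$, both sides equal $\infty$. So from now on $g\neq 1$ and $\lambda\neq 0$; write $n=\omega(g)<\infty$, $k=\nu(\lambda)<\infty$, and $\lambda=p^k u$ with $u\in\Z_p^\times$.

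Next I would record a monotonicity fact: $\omega(g^\mu)\geq\omega(g)$ for every $\mu\in\Z_p$. This holds because $g\in G_n$, every integer power $g^a$ lies in the subgroup $G_n$, and $G_n=P_n(G)$ is closed (being a topological closure), so the limit $g^\mu$ of a sequence of integer powers $g^{a_i}$ again lies in $G_n$. Applying this with $g$ replaced by $g^u$ and $\mu=u^{-1}$, together with $(g^u)^{u^{-1}}=g$, yields $\omega(g)\geq\omega(g^u)\geq\omega(g)$, hence $\omega(g^u)=\omega(g)$: multiplying the exponent by a unit leaves $\omega$ unchanged.

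The heart of the argument is the claim that $\omega(h^{p^m})=\omega(h)+m$ for every $h\in G$ and every $m\geq 0$ (trivial if $h=1$). The inequality ``$\geq$'' is immediate from Theorem~\ref{powerful_thm}(b), which gives $G_{\omega(h)+m}=G_{\omega(h)}^{p^m}$, so that $h\in G_{\omega(h)}$ forces $h^{p^m}\in G_{\omega(h)+m}$. For the reverse inequality I would invoke uniformity: composing the $m$ isomorphisms $G_i/G_{i+1}\to G_{i+1}/G_{i+2}$ (induced by $x\mapsto x^p$) for $i=\omega(h),\dots,\omega(h)+m-1$ shows that $x\mapsto x^{p^m}$ induces an \emph{isomorphism} $G_{\omega(h)}/G_{\omega(h)+1}\to G_{\omega(h)+m}/G_{\omega(h)+m+1}$; since $h\notin G_{\omega(h)+1}$ its class is nonzero, hence $h^{p^m}\notin G_{\omega(h)+m+1}$.

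Finally I would combine the pieces: using the $\Z_p$-action identity $g^\lambda=g^{p^k u}=(g^u)^{p^k}$, the previous step applied to $h=g^u$ gives $\omega(g^\lambda)=\omega(g^u)+k$, and the unit step gives $\omega(g^u)=\omega(g)$, so $\omega(g^\lambda)=\omega(g)+k=\omega(g)+\nu(\lambda)$. I expect the only genuinely nontrivial point to be the third step — that the $p^m$-th power map induces an isomorphism, not merely a surjection, of successive quotients — which is exactly where uniformity is needed; for a powerful but non-uniform group this argument would deliver only $\omega(g^\lambda)\geq\omega(g)+\nu(\lambda)$. The rest is bookkeeping with the $\infty$ conventions and the elementary identities of the $\Z_p$-action.
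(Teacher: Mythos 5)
Your proof is correct, but takes a genuinely different route from the paper's. You decompose $\lambda = p^k u$ multiplicatively into a unit $u$ and a $p$-power, and handle the two separately: the unit case follows from the monotonicity $\omega(h^\mu)\geq\omega(h)$ applied in both directions via $(g^u)^{u^{-1}}=g$, while the $p$-power case uses the composed isomorphisms $G_i/G_{i+1}\to G_{i+1}/G_{i+2}$ supplied by uniformity, together with Theorem~\ref{powerful_thm}(b) for the lower bound. The paper instead approximates $\lambda$ additively to first $p$-adic order: it writes $\lambda = rp^k + \rho$ with $0<r<p$ and $\nu(\rho)>k$, shows $g^\rho\in G_{\omega(g)+k+1}$, hence $g^\lambda\equiv g^{rp^k}\pmod{G_{\omega(g)+k+1}}$, and then uses the isomorphism $G_{\omega(g)}/G_{\omega(g)+1}\to G_{\omega(g)+k}/G_{\omega(g)+k+1}$ together with the coprimality of $r$ and $p$ to see that the class of $g^{rp^k}$ is nontrivial. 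Both proofs rest on the same facts from Theorem~\ref{powerful_thm} and the uniformity isomorphisms, but your version is arguably cleaner in that it isolates exactly where uniformity enters (the $p^m$-power step), with the unit step being a general consequence of the $\Z_p$-module structure and the closedness of the $P_i(G)$; you also correctly observe that dropping uniformity only yields the inequality $\omega(g^\lambda)\geq\omega(g)+\nu(\lambda)$.
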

\begin{proof}
	Set $d = \gpdim(G)$.
	In case $g = 1$ or $\lambda = 0$ there is nothing to show. Suppose $g \neq 1$ and $\lambda \neq 0$.
	Set $k = \nu(\lambda)$. Then $\lambda = rp^k \rho$ with $0 \leq r < p$ and $\nu(\rho) > k$.	
	As $g^{p^{k+1}} \in G_{\omega(g)+k+1}$ and $\nu(\rho) > k$, it follows $g^\rho \in  G_{\omega(g)+k+1}$.
	Hence $g^\lambda \equiv g^{rp^k} \pmod{ G_{\omega(g)+k+1}}$ and therefore $g^\lambda \in G_{\omega(g)+k+1}$.
	
	The map $x \mapsto x^{p^k}$ induces an isomorphism
	\[ f: G_{\omega(g)}/G_{\omega(g)+1} \rightarrow G_{\omega(g)+k}/G_{\omega(g)+k+1}. \]
	Hence $g^{p^k} \in G_{\omega(g)+k} \setminus G_{\omega(g)+k+1}$ and hence $g^{rp^k} \in G_{\omega(g)+k} \setminus G_{\omega(g)+k+1}$ because $r$ does not divide
	$p^d = | G_{\omega(g)+k} : G_{\omega(g)+k+1} |$.
\end{proof}

\begin{proposition}[Theorem 1.18 (iv) of \cite{sautoy}]  \label{omega_x_lambda}
	Let $G$ be a uniform pro-$p$ group, $d = \gpdim(G)$, and $\{ x_1, \dots, x_d \}$ a generating set for $G$.
	If $x = x(\lambda)$ then $\omega(x) = \min \{ \nu(\lambda_i) + 1 : i = 1, \dots, d \}$.
\end{proposition}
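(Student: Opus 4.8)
The plan is to combine the previous lemma --- which gives $\omega(g^\lambda) = \omega(g) + \nu(\lambda)$ for $g$ in a uniform pro-$p$ group --- with the ultrametric inequality $\omega(ab) \ge \min\{\omega(a),\omega(b)\}$ and with the uniformity of $G$, which makes $x \mapsto x^{p^k}$ induce isomorphisms on the successive quotients of the lower $p$-series. First I would record that $\omega(x_i) = 1$ for every generator: $x_i \in G = P_1(G)$, and since $\{x_1,\dots,x_d\}$ generates $G$ topologically, the classes $x_i\Phi(G)$ span the $d$-dimensional $\F_p$-vector space $G/\Phi(G)$, hence form a basis, so in particular $x_i \notin \Phi(G) = P_2(G)$. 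By the previous lemma, $\omega(x_i^{\lambda_i}) = \omega(x_i) + \nu(\lambda_i) = \nu(\lambda_i) + 1$ for each $i$ (reading $\nu(0) = \infty$, so that the case $\lambda_i = 0$, i.e.\ $x_i^{\lambda_i} = 1$, is covered). Since $x = x_1^{\lambda_1}\cdots x_d^{\lambda_d}$, iterating $\omega(ab) \ge \min\{\omega(a),\omega(b)\}$ immediately yields $\omega(x) \ge \min_i(\nu(\lambda_i)+1)$. If all $\lambda_i = 0$ then $x = 1$ and both sides are $\infty$; so assume otherwise and set $n := 1 + \min_i \nu(\lambda_i) \ge 1$. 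It remains to prove the reverse inequality, i.e.\ $x \notin P_{n+1}(G)$.

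For this, write $\lambda_i = p^{n-1}\mu_i$ with $\mu_i \in \Z_p$; then $\nu(\mu_i) \ge 0$ for all $i$ and $\mu_j \in \Z_p^\times$ for at least one index $j$. Using the rules for the $\Z_p$-action we have $x_i^{\lambda_i} = (x_i^{p^{n-1}})^{\mu_i}$, so $x = y_1^{\mu_1}\cdots y_d^{\mu_d}$ where $y_i := x_i^{p^{n-1}}$ lies in $G_n = G^{p^{n-1}}$ by Theorem~\ref{powerful_thm}; since $G_n$ is a closed subgroup it follows that $x \in G_n$. Now I would pass to $G_n/G_{n+1}$. By the definition of the lower $p$-series this quotient is an elementary abelian $p$-group, and composing the $n-1$ uniform isomorphisms $G_i/G_{i+1} \to G_{i+1}/G_{i+2}$ induced by $x \mapsto x^p$ shows that $x \mapsto x^{p^{n-1}}$ induces an isomorphism $G/\Phi(G) = G_1/G_2 \to G_n/G_{n+1}$; hence the classes $\bar y_i$ of the $y_i$ form a basis of $G_n/G_{n+1}$. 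Because the quotient map $G_n \to G_n/G_{n+1}$ is a continuous homomorphism, it intertwines the $\Z_p$-action, and since $G_n/G_{n+1}$ has exponent $p$ this action is just multiplication by the residue mod $p$; thus the image of $x$ in $G_n/G_{n+1}$ equals $\sum_i (\mu_i \bmod p)\,\bar y_i$, which is nonzero because $\mu_j \bmod p \ne 0$. Hence $x \notin G_{n+1}$, so $\omega(x) \le n$, and together with the first inequality $\omega(x) = n = \min_i(\nu(\lambda_i)+1)$.

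The only delicate points are in the second paragraph: checking that the composite of the uniform $p$-th-power isomorphisms is indeed induced by $x \mapsto x^{p^{n-1}}$, so that it carries the basis $\{x_i\Phi(G)\}$ of $G/\Phi(G)$ to $\{\bar y_i\}$, and the observation that on the exponent-$p$ quotient $G_n/G_{n+1}$ the $\Z_p$-action reduces to scalar multiplication by the residue class mod $p$ (so that the quotient map really sends $\prod_i y_i^{\mu_i}$ to $\sum_i (\mu_i \bmod p)\bar y_i$). Everything else is a direct application of the previous lemma and of Theorem~\ref{powerful_thm}.
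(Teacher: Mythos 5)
Your argument is correct and follows essentially the same route as the paper's proof: reduce to showing $x(\lambda)\notin G_{n+1}$, observe that the $x_i^{p^{n-1}}$ give a basis of the $\F_p$-vector space $G_n/G_{n+1}$, and then check that the image of $x(\lambda)$ there is the nontrivial linear combination with coefficients $\mu_i\bmod p$ (the paper's $r_i$). You are a bit more explicit about why the composite of the $p$-th power isomorphisms is induced by $x\mapsto x^{p^{n-1}}$ and why the $\Z_p$-action on the quotient reduces to scalar multiplication mod $p$, but those are exactly the implicit steps in the paper's proof.
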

\begin{proof}
	As $G_2 = \Phi(G)$ is the set of nongenerators, we have $\omega(x_i) = 1$ for each $i$. In particular, $\omega(x_i^{\lambda_i}) \geq n :=  \min \{ \nu(\lambda_i) + 1 : i = 1, \dots, d \}$.
	Therefore $x_i^{\lambda_i} \in G_n$ for each $i$. Write $x_i^{\lambda_i} \equiv x_i^{r_ip^{n-1}} \pmod{G_n}$. By the minimality of $n$, not all $r_i$ are zero.
	Note that the $x_i^{p^{n-1}}$ are a basis of the vector space $G_n/G_{n+1}$. We have
	\[ x(\lambda) \equiv x_1^{r_1p^{n-1}} \cdot \dots \cdot x_d^{r_dp^{n-1}} \pmod{G_{n+1}}. \]
	This is a nontrivial linear combination of base vectors and thus nontrivial.
	Thus $x(\lambda) \not \in G_{n+1}$.		
\end{proof}

We will use a construction from Section 2 of \cite{sautoy}.
Let $G$ be a uniform pro-$p$ group. If $x(\lambda) \in G_n$. Then $n \leq \omega(x(\lambda)) = \min \{ \nu(\lambda_i) + 1 : i = 1, \dots, d \}$ and hence
$\nu(\lambda(i)) \geq n-1$ for all $i = 1, \dots d$. Therefore $p^{-(n-1)} \lambda_i \in \Z_p$ for all $i$.
We define the map 
\[ \pi_n : G_n \rightarrow \F_p^d, x(\lambda) \mapsto ( \pi(p^{-(n-1)} \lambda_1), \dots \pi(p^{-(n-1)} \lambda_d) ) \]
where $\pi: \Z_p \rightarrow \F_p$ is the residue map.
If $x(\lambda^1 = (\lambda_1^1, \dots \lambda_d^1) )$ and $x(\lambda^2 = (\lambda_1^2, \dots \lambda_d^2))$ are elements of $G_n$ then $x_i^{\lambda_i^k} \in G_n$ for $i = 1, \dots d$ and $k = 1,2$.
Take $\lambda \in \Z_p^d$ such that $x(\lambda) = x(\lambda_1) x(\lambda_2)$. Then
\[ x_1^{\lambda_1} \cdots x_d^{\lambda_d} \equiv x_1^{\lambda_1^1+ \lambda_1^2} \cdots  x_d^{\lambda_d^1+ \lambda_d^2} \pmod{G_{n+1}} \] 
as $G_n/G_{n+1}$ is abelian.
Hence $x(\lambda^1 + \lambda^2 - \lambda ) \equiv 1 \pmod{G_{n+1}}$ and thus $x(\lambda^1 + \lambda^2 - \lambda ) \in G_{n+1}$.
This implies
\[ n+1 \leq \omega(x(\lambda^1 + \lambda^2 - \lambda )) = \min \{ \nu(\lambda_i^1+ \lambda_i^2-\lambda_i) +1 : i = 1, \dots d \}\]
and therefore $\nu(\lambda_i^1+ \lambda_i^2-\lambda_i) \geq n$ for all $i$. We see that $\pi_n(x(\lambda)) = \pi_n(x(\lambda^1+\lambda^2))$ and therefore $\pi_n$ is a homomorphism.

Moreover, $x(\lambda) \in \ker \pi_n$ if and only if $\nu(p^{-(n-1)} \lambda_i) > 0$ for all $i$. But this is equivalent to
$\omega(x(\lambda)) > n$ and hence $\ker \pi_n = G_{n+1}$. Therefore $\pi_n$ is an isomorphism between the $d$ dimensional
$\F_p$ vector spaces $G_n/G_{n+1}$ and $\F_p^d$.

Let $f_i : G_i/G_{i+1} \rightarrow G_{i+1}/G_{i+2}$ be the isomorphism induced by $x \mapsto x^p$.
Then 
\[ f_i(x(\lambda) G_{i+1} ) = f_i(x_1^{\lambda_1}G_{i+1}) \cdots f_i(x_d^{\lambda_d}G_{i+1}) 
		= x_1^{p \lambda_1}G_{i+2} \cdots x_d^{p \lambda_d}G_{i+2} = x(p\lambda) G_{i+2}. \]

\begin{definition}
	Let $G$ be a uniform pro-$p$ group, $d = \gpdim(G)$, and let $H \leq_o G$ be an open subgroup.
	A tuple $(h_1, \dots , h_d) \in H^d$ is a \textit{good basis} of $H$ if
	\begin{itemize}
		\item[(a)] $\omega(h_i) \leq \omega(h_j)$ whenever $i \leq j$, and
		\item[(b)] for each $n$, $\{ \pi_n(h_j) : j \in I_n \}$ extends the linearly independent set $\{ \pi_n(h_j^{p^{n-\omega(h_j)}}) : j \in I_1 \cup \dots \cup I_{n-1} \}$
			to a basis of $\pi_n(H \cap G_n)$, where $I_n := \{ j : \omega(h_j) = n \}$.
	\end{itemize} 
\end{definition}

\begin{lemma} \label{good_base_exist}
	Let $G$ be a uniform pro-$p$ group and let $H \leq_o G$ be an open subgroup. Then there is a good basis for $H$.
\end{lemma}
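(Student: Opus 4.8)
The plan is to build the good basis by induction on the level $n$, constructing the "layers" $I_1, I_2, \dots$ one at a time, and to terminate the process by exploiting the fact that $H$ is open, hence contains $G_m$ for some $m$. Concretely, I would proceed as follows. For $n=1$: choose elements $h_j \in H$ with $\omega(h_j) = 1$ (i.e.\ $h_j \notin G_2$) so that the images $\pi_1(h_j)$ form a basis of the $\mathbb{F}_p$-subspace $\pi_1(H \cap G_1) = \pi_1(H) \subseteq \mathbb{F}_p^d$; call their index set $I_1$. Having chosen $I_1, \dots, I_{n-1}$, look at the subspace $\pi_n(H \cap G_n) \subseteq \mathbb{F}_p^d$ together with its distinguished linearly independent subset $S_n := \{ \pi_n(h_j^{p^{n-\omega(h_j)}}) : j \in I_1 \cup \dots \cup I_{n-1} \}$; pick new elements $h_j \in H \cap G_n$ with $\omega(h_j) = n$ whose $\pi_n$-images extend $S_n$ to a basis of $\pi_n(H \cap G_n)$, and let $I_n$ be their index set.

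The first thing to check is that $S_n$ is genuinely linearly independent inside $\pi_n(H \cap G_n)$, so that an extension to a basis exists; this follows from the computation just before the definition of good basis, namely that the isomorphism $f_i : G_i/G_{i+1} \to G_{i+1}/G_{i+2}$ corresponds under the $\pi$-maps to multiplication by $p$ on the coordinates, i.e.\ $\pi_{i+k}$ applied to $h_j^{p^{k}}$ (when $\omega(h_j) = i$) records exactly the image of $\pi_i(h_j)$ under the iterated $f$'s, and these iterated images of the original basis vectors of $\pi_i(H \cap G_i)$ remain independent because each $f_i$ is an isomorphism; a small bookkeeping argument combining the layers $I_1, \dots, I_{n-1}$ is needed here. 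The second point is that the process stops after finitely many steps, in the sense that $I_n = \emptyset$ for all $n$ large and the union $\bigcup_n I_n$ is finite of size $d$: since $H$ is open in $G$ it contains $G_m$ for some $m$, so $H \cap G_n = G_n$ for $n \ge m$, and for such $n$ the set $S_n$ already spans $\pi_n(G_n) = \mathbb{F}_p^d$ (because the layers built so far account, via the $f_i$'s, for all of $G_n/G_{n+1}$ once $n \ge m$ — this uses that $|G : H|$ is a fixed power of $p$ and that $|G_n/G_{n+1}| = p^d$ for every $n$, so the dimension count forces $|I_1| + \dots + |I_m| = d$ and no further elements are added). Reordering the chosen $h_j$ so that their $\omega$-values are nondecreasing then gives condition (a), and condition (b) holds by construction.

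The main obstacle, and the step deserving the most care, is the dimension-counting argument that simultaneously shows $S_n$ spans once $n \ge m$ and that the total number of basis elements is exactly $d = \gpdim(G)$. One has to verify that the "old" contributions $\pi_n(h_j^{p^{n-\omega(h_j)}})$ together with the "new" generators at each level never overcount: at level $n < m$ the subspace $\pi_n(H \cap G_n)$ may be proper in $\mathbb{F}_p^d$, and one must track how $\dim \pi_n(H \cap G_n)$ grows (weakly) with $n$ up to $m$, reaching $d$ at $n = m$. This is exactly where du~Sautoy's stratification idea is doing its work, and it is cleanest to phrase it as: for each $n$, $\dim_{\mathbb{F}_p} \pi_n(H \cap G_n) = |\{ j \in \bigcup_{k \le n} I_k \}|$ minus nothing — i.e.\ the constructed elements at levels $\le n$ map under the appropriate power maps to a basis of $\pi_n(H \cap G_n)$ — proved by induction on $n$ using the independence fact above and the defining extension property of each $I_n$. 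Once that invariant is established, openness of $H$ closes the argument.
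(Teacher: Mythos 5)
Your proposal is correct and follows essentially the same route as the paper's proof: build the good basis inductively level by level (the paper phrases this as jumping to the minimal $s>r$ where $\dim_{\F_p}\pi_s(H\cap G_s)$ increases, which is exactly your ``next nonempty layer $I_s$''), and terminate using openness of $H$, since $G_N\leq H$ forces $\pi_N(H\cap G_N)=\F_p^d$ for large $N$. The one thing you spell out that the paper leaves implicit is why the ``carried forward'' set $S_n$ is linearly independent; the cleanest justification is the identity $\pi_n\bigl(h_j^{p^{\,n-\omega(h_j)}}\bigr)=\pi_{\omega(h_j)}(h_j)$ (immediate from $f_i(x(\lambda)G_{i+1})=x(p\lambda)G_{i+2}$ and the definition of $\pi_n$), so $S_n$ is just the previously chosen basis of $\pi_{n-1}(H\cap G_{n-1})$ sitting inside $\F_p^d$, hence independent by the inductive hypothesis — which is the content of your ``iterated $f_i$'' argument.
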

\begin{proof}
	Set $d = \gpdim(G)$. Assume $(h_1, \dots, h_m)$ satisfy (a) and (b) up to $r$ for some $r \geq 0$.
	Note that $G_N \leq H$ for sufficiently large $N$ and therefore
	\[ \pi_N(H \cap G_N) = \pi_N(G_N) = \F_p^d. \]
	If $m < d$ then there is a minimal $s> r$ such that $\dim_{\F_p}(\pi_s(H \cap G_s) ) > \dim_{\F_p}(\pi_r(H \cap G_r ))$.
	As $\pi_s : G_s/G_{s+1} \rightarrow \F_p^d$ is an isomorphism, we can find $h_{m+1}, \dots, h_{m+k} \in H \cap G_s \setminus G_{s+1}$ such that
	$\{ \pi_s(h_{m+1}) , \dots , \pi_s(h_{m+k}) \}$ extends the linearly independent set $\{ \pi_s( h_j^{p^{n-\omega(h_j)}}) : j \leq m \}$ to a basis of $\pi_s( H \cap G_s )$. 
\end{proof}

If $(h_1, \dots, h_d)$ is a good basis and  $\lambda_1, \dots, \lambda_d \in \Z_p$ are elements of $\Z_p$ then we will denote
$h_1^{\lambda_1} \cdots h_d^{\lambda_d}$ by $h(\lambda)$. The valuation-like map $\omega$ and the $p$-adic valuation $\nu$ are compatible with respect to good bases.  Using the fact that the quotients of the lower $p$-series are $\F_p$-vector spaces one then shows that a \emph{good basis} deserves its name in the following sense:
\begin{lemma}
Suppose $H \leq_o G$  is an open subgroup and $(h_1, \dots, h_d)$  a good basis for $H$. Then for every $h \in H$ there are (unique) $\lambda_1, \dots, \lambda_d \in \Z_p$ such that $h = h_1^{\lambda_1} \cdots h_d^{\lambda_d}$.
	Furthermore, if $h = h(\lambda)$, then $\omega(h) = \min \{ \omega(h_i) + \nu(\lambda_i) : i = 1, \dots, d \}$.
\end{lemma}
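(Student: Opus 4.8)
The plan is to show that the map $\psi_H\colon \Z_p^d \to H$, $(\lambda_1,\dots,\lambda_d)\mapsto h_1^{\lambda_1}\cdots h_d^{\lambda_d}$, is a bijection, and then to compute $\omega$ on its image. Throughout write $\omega_j=\omega(h_j)$, so that $\omega_1\le\cdots\le\omega_d$. First I would collect the tools. Since $G$ is uniform, each $\pi_n\colon G_n/G_{n+1}\to\F_p^d$ is an isomorphism of $\F_p$-vector spaces; the identity $f_i(x(\lambda)G_{i+1})=x(p\lambda)G_{i+2}$ together with the definition of $\pi_n$ gives $\pi_{n+k}(y^{p^k})=\pi_n(y)$ for $y\in G_n$ and $k\ge 0$; and since the $\Z_p$-action on $G_n/G_{n+1}$ factors through $\F_p$ we have $\pi_n(y^\mu)=\bar\mu\,\pi_n(y)$ for $y\in G_n$, $\mu\in\Z_p$, where $\bar\mu$ denotes $\mu\bmod p$; finally $\omega(g^\mu)=\omega(g)+\nu(\mu)$. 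Since $G$ is powerful, $[G_i,G]\le\overline{G_i^p}=G_{i+1}$ by Theorem~\ref{powerful_thm}; thus $G_n/G_{n+1}$ is abelian and, more usefully, commutators of an element of $G_{n+1}$ with an element of $G$ lie in $G_{n+2}$, which is what permits free reordering of products modulo a term of the lower $p$-series. Finally, since $\pi_n(h_j^{p^{n-\omega_j}})=\pi_{\omega_j}(h_j)$, condition (b) of a good basis says precisely that $\{\pi_{\omega_j}(h_j) : \omega_j\le n\}$ is a basis of $\pi_n(H\cap G_n)$ for every $n$; taking $n$ large enough that $G_n\le H$ forces every $\omega_j$ to be finite and makes $\{\pi_{\omega_1}(h_1),\dots,\pi_{\omega_d}(h_d)\}$ a basis of $\F_p^d$, so that every subfamily of it is linearly independent.

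For surjectivity I would fix $h\in H$ and build it up along the lower $p$-series: by induction on $n\ge 0$ I produce integer tuples $\lambda^{(n)}$ with $h\equiv h_1^{\lambda_1^{(n)}}\cdots h_d^{\lambda_d^{(n)}}\pmod{G_{n+1}}$, with $\lambda_j^{(n)}=0$ whenever $\omega_j>n$, and with $\lambda_j^{(n+1)}\equiv\lambda_j^{(n)}\pmod{p^{\,n+1-\omega_j}}$, starting from $\lambda^{(0)}=0$. In the inductive step, set $g=\bigl(h_1^{\lambda_1^{(n)}}\cdots h_d^{\lambda_d^{(n)}}\bigr)^{-1}h\in H\cap G_{n+1}$, expand $\pi_{n+1}(g)=\sum_{\omega_j\le n+1}\bar{e}_j\,\pi_{\omega_j}(h_j)$ in the basis above, lift each $\bar{e}_j$ to $e_j\in\{0,\dots,p-1\}$, and put $\lambda_j^{(n+1)}=\lambda_j^{(n)}+e_j p^{\,n+1-\omega_j}$ for $\omega_j\le n+1$ and $\lambda_j^{(n+1)}=\lambda_j^{(n)}$ otherwise. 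Each new factor $h_j^{e_j p^{\,n+1-\omega_j}}$ lies in $G_{n+1}$, so modulo $G_{n+2}$ the new factors may be gathered to the right (at the cost of commutators lying in $[G_{n+1},G]\le G_{n+2}$), and their product has the same $\pi_{n+1}$-image as $g$; since $\pi_{n+1}$ is an isomorphism of $G_{n+1}/G_{n+2}$, this product is congruent to $g$ modulo $G_{n+2}$, whence $h_1^{\lambda_1^{(n+1)}}\cdots h_d^{\lambda_d^{(n+1)}}\equiv h\pmod{G_{n+2}}$. The congruences on the $\lambda_j^{(n)}$ make each sequence $(\lambda_j^{(n)})_n$ Cauchy in $\Z_p$ with some limit $\lambda_j$, and then $\psi_H(\lambda)\equiv h\pmod{G_{n+1}}$ for all $n$; since $\bigcap_n G_n=1$ this gives $\psi_H(\lambda)=h$.

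For uniqueness I would prove, by induction on $n\ge 0$, that if $h_1^{\lambda_1}\cdots h_d^{\lambda_d}\equiv h_1^{\lambda_1'}\cdots h_d^{\lambda_d'}\pmod{G_{n+1}}$ then $\lambda_j\equiv\lambda_j'\pmod{p^{\,n+1-\omega_j}}$ for all $j$; the case $n=0$ is vacuous since $G_1=G$. For the step, the inductive hypothesis lets us write $\lambda_j'=\lambda_j+p^{\,n+1-\omega_j}\varepsilon_j$ for each $j$ with $\omega_j\le n+1$; modulo $G_{n+2}$ one discards the trailing factors with $\omega_j\ge n+2$, moves the correction factors $h_j^{p^{\,n+1-\omega_j}\varepsilon_j}\in G_{n+1}$ to the right (commutators again landing in $[G_{n+1},G]\le G_{n+2}$), and cancels, obtaining $\prod_{\omega_j\le n+1}h_j^{p^{\,n+1-\omega_j}\varepsilon_j}\in G_{n+2}$; applying $\pi_{n+1}$ gives $\sum_{\omega_j\le n+1}\bar{\varepsilon}_j\,\pi_{\omega_j}(h_j)=0$, so the linear independence above forces every $\bar{\varepsilon}_j=0$, i.e.\ $\lambda_j\equiv\lambda_j'\pmod{p^{\,n+2-\omega_j}}$. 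Applying this for all $n$ shows that $\psi_H(\lambda)=\psi_H(\lambda')$ implies $\lambda=\lambda'$, giving in particular the uniqueness of the $\lambda_i$ asserted in the statement.

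It remains to compute $\omega(h)$ for $h=h_1^{\lambda_1}\cdots h_d^{\lambda_d}\ne 1$. Put $n=\min\{\omega_j+\nu(\lambda_j) : \lambda_j\ne 0\}$, which is finite. Each factor $h_j^{\lambda_j}$ lies in $G_{\omega_j+\nu(\lambda_j)}\subseteq G_n$, so $h\in G_n$ and $\omega(h)\ge n$; and working modulo $G_{n+1}$ — where all factors now lie, the quotient is abelian, and mutual commutators of the factors lie in $[G_n,G]\le G_{n+1}$ — gives $\pi_n(h)=\sum_j\pi_n(h_j^{\lambda_j})$, in which the terms with $\omega_j+\nu(\lambda_j)>n$ vanish and the terms with $\omega_j+\nu(\lambda_j)=n$ each equal a nonzero scalar times $\pi_{\omega_j}(h_j)$. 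Since at least one index attains the minimum and the $\pi_{\omega_j}(h_j)$ are linearly independent, $\pi_n(h)\ne 0$, so $h\notin G_{n+1}$ and $\omega(h)=n=\min_j\{\omega_j+\nu(\lambda_j)\}$ (with the convention $\nu(0)=\infty$), which also covers the case $h=1$. The substantial inputs here are Theorem~\ref{powerful_thm} (which yields $[G_i,G]\le G_{i+1}$ and that $\pi_n$ is an isomorphism in the uniform case) and condition (b) of a good basis, which supplies exactly the linear independence making each digit of $\lambda$ uniquely readable at its level; the part I expect to need the most care is the bookkeeping in the two inductions — tracking which factors vanish modulo which $G_m$, and the shift by $\omega_j$ in the congruences on the coordinates $\lambda_j$.
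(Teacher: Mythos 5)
Your argument is correct and complete; the paper itself states this lemma without proof, saying only that one uses the $\F_p$-vector-space structure of the lower $p$-series quotients, and your write-up carries out exactly that plan using the machinery the paper has already set up (the isomorphisms $\pi_n\colon G_n/G_{n+1}\to\F_p^d$, the compatibility $\pi_{n+k}(y^{p^k})=\pi_n(y)$, condition (b) of a good basis giving linear independence of $\{\pi_{\omega_j}(h_j)\}$, the formula $\omega(g^\mu)=\omega(g)+\nu(\mu)$, and $[G_n,G]\le G_{n+1}$ to justify reordering modulo a given term of the series). The only notational slip is writing $\lambda_j\equiv\lambda_j'\pmod{p^{\,n+1-\omega_j}}$ without the obvious proviso $\max(0,n+1-\omega_j)$ when $\omega_j>n+1$; this does not affect the argument.
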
	

With this one can finally prove du Sautoy's characterization of good bases.

\begin{lemma}[Lemma 2.5 of \cite{sautoy}] \label{good_basis_lemma}
	Let $G$ be a uniform pro-$p$ group, $d = \gpdim(G)$, and $(h_1, \dots h_d ) \in G^d$. Then $(h_1, \dots h_d)$ is a good basis
	for some open subgroup of $G$ if and only if
	\begin{itemize}
		\item[(a)] $\omega(h_i) \leq \omega(h_j)$ whenever $i \leq j$;
		\item[(b)] $h_i \neq 1$ for $i = 1, \dots d$;
		\item[(c)] the set $\{ h_1^{\lambda_1} \cdots h_d^{\lambda_d} : \lambda_i \in \Z_p\}$ is a subgroup of $G$; and
		\item[(d)] for all $\lambda_1, \dots \lambda_d \in \Z_p$, $\omega(h(\lambda)) = \min\{ \omega(h_i) + \nu(\lambda_i) : i = 1, \dots d \}$.
	\end{itemize}
\end{lemma}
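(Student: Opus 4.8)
The plan is to prove both implications, with the reverse direction carrying essentially all the weight. For the forward direction, suppose $(h_1,\dots,h_d)$ is a good basis of an open subgroup $H\leq_o G$. Then (a) is part of the definition of a good basis; (b) holds because every $\omega(h_j)$ must be finite --- otherwise the vectors attached to $h_j$ would be zero and the sets in condition~(b) of the good-basis definition could not form bases of the $d$-dimensional spaces $\pi_n(H\cap G_n)$ for large $n$ --- so $h_j\neq 1$; and (c), (d) are exactly the content of the lemma immediately preceding this one, which yields $H=\{h_1^{\lambda_1}\cdots h_d^{\lambda_d}:\lambda\in\Z_p^d\}$ together with the formula $\omega(h(\lambda))=\min_i\{\omega(h_i)+\nu(\lambda_i)\}$.

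For the reverse direction, assume (a)--(d) and set $H=\{h_1^{\lambda_1}\cdots h_d^{\lambda_d}:\lambda\in\Z_p^d\}$. By (c) this is a subgroup, and it is closed, being the image of the compact group $\Z_p^d$ under the continuous map $\lambda\mapsto h_1^{\lambda_1}\cdots h_d^{\lambda_d}$. It remains to show that $H$ is open and that $(h_1,\dots,h_d)$ satisfies condition~(b) of the good-basis definition relative to $H$ (condition~(a) being assumed). Writing $I_n=\{j:\omega(h_j)=n\}$ --- these are intervals partitioning $\{1,\dots,d\}$ by (a), with all $\omega(h_j)$ finite by (b) since $\bigcap_n G_n=1$ --- both remaining tasks reduce to a single claim: for every $n$ the set
\[ B_n=\{\pi_n(h_j):j\in I_n\}\cup\{\pi_n(h_j^{\,p^{\,n-\omega(h_j)}}):j\in I_1\cup\dots\cup I_{n-1}\} \]
is a basis of $\pi_n(H\cap G_n)\subseteq\F_p^d$.

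To prove this I would first note that each listed vector lies in $\pi_n(H\cap G_n)$: for $j\in I_m$ with $m\leq n$ we have $h_j^{\,p^{n-m}}\in H$ and, by the lemma computing $\omega$ of $\Z_p$-powers, $\omega(h_j^{\,p^{n-m}})=m+(n-m)=n$, so $h_j^{\,p^{n-m}}\in H\cap G_n$. For linear independence, take a vanishing $\F_p$-combination of $B_n$, lift its coefficients to integers in $\{0,\dots,p-1\}$, and form $\lambda\in\Z_p^d$ with $\lambda_j=0$ if $\omega(h_j)>n$ and $\lambda_j=(\text{the lift})\cdot p^{\,n-\omega(h_j)}$ otherwise; since $\pi_n$ restricted to $G_n$ is a homomorphism with kernel $G_{n+1}$, one checks $\pi_n(h(\lambda))$ equals the chosen combination, so $h(\lambda)\in G_{n+1}$ and $\omega(h(\lambda))\geq n+1$. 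But a nontrivial combination has some $\lambda_j\neq 0$ with $\nu(\lambda_j)=n-\omega(h_j)$, and then (d) gives $\omega(h(\lambda))=\min_j\{\omega(h_j)+\nu(\lambda_j)\}=n$, a contradiction; hence the combination is trivial. For spanning, take $g\in H\cap G_n$, write $g=h(\mu)$, use (d) to get $\nu(\mu_j)\geq n-\omega(h_j)$ whenever $\omega(h_j)\leq n$ (while $h_j^{\mu_j}\in G_{n+1}$ when $\omega(h_j)>n$), and reduce the product $g=\prod_j h_j^{\mu_j}$ modulo $G_{n+1}$: applying $\pi_n$ writes $\pi_n(g)$ as an $\F_p$-combination of the vectors of $B_n$. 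This gives condition~(b) of the good-basis definition, the $p$-power part of $B_n$ being linearly independent as a subset of the independent set $B_n$.

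Finally, for openness I would apply the spanning half at $n=N:=\max_j\omega(h_j)$: the $d$ vectors $\{\pi_N(h_j^{\,p^{\,N-\omega(h_j)}}):1\leq j\leq d\}$ span $\pi_N(H\cap G_N)$, which forces $\pi_N(H\cap G_N)=\F_p^d$ since $\pi_N$ is an isomorphism $G_N/G_{N+1}\to\F_p^d$; equivalently $(H\cap G_N)G_{N+1}=G_N$. As $G$ is uniform, $G_{N+1}=\Phi(G_N)$ by Theorem~\ref{powerful_thm}, so by the nongenerator property of the Frattini subgroup (Proposition~\ref{prop:frattini}) the closed subgroup $H\cap G_N$ topologically generates $G_N$, whence $H\cap G_N=G_N$ and $G_N\leq H$; since $G_N$ is open in the finitely generated pro-$p$ group $G$ (the lower $p$-series being a neighbourhood basis of $1$), $H$ is open, and $(h_1,\dots,h_d)$ is a good basis of $H$. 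I expect the main obstacle to be the linear-independence half of the key computation: one must recognise that condition~(d) is exactly what rules out a nontrivial dependence --- it pins $\omega(h(\lambda))$ to $n$ precisely when $h(\lambda)$ would otherwise have to lie in $G_{n+1}$ --- and the bookkeeping relating $\Z_p$-powers, their $p$-adic valuations, and reductions under $\pi_n$, though routine, has to be done carefully.
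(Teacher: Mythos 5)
Your proof is correct and follows essentially the same line of reasoning as the paper's: both directions hinge on translating the good-basis conditions into statements about $\pi_n(H\cap G_n)$, with condition (d) doing all the work in establishing linear independence of $B_n$, and the nongenerator property of the Frattini subgroup $\Phi(G_N)=G_{N+1}$ yielding openness. Your handling of the linear-independence step (allowing some coefficients to vanish and lifting to $\Z_p$) is a slightly cleaner formulation than the paper's, which restricts to combinations with coefficients in $\Z_p\setminus p\Z_p$, but the underlying computation is the same.
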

\begin{proof}
	Let $(h_1, \dots h_d)$ be a good basis for some open subgroup $H$. Then (a) holds by definition and (c) and (d) hold by the
	previous lemma. Note that $\pi_n(H \cap G_n ) = \pi_n(G_n) = \F_p^d$ for sufficiently large $n$. Then
	$\{ \pi_n( h_i^{p^{e(n,i)}} ) : i = 1, \dots d \}$ is a basis for $\F_p^d$. In particular, $h_i \neq 1$ for all $i$.
	
	Now assume $(h_1, \dots h_d)$ satisfies (a) to (d). Set $H = \{ h_1^{\lambda_1} \cdots h_d^{\lambda_d} : \lambda_1 , \dots \lambda_d \in \Z_p \}$. Then $H$ is a closed subgroup of $G$.
	We have $H \cap G_k = \{ h_1^{\lambda_1} \cdots h_d^{\lambda_d} : \omega(h_1^{\lambda_1} \cdots h_d^{\lambda_d}) \leq k \}$ and hence (d) implies
	\[ H \cap G_k = \{ h_1^{\lambda_1} \cdots h_d^{\lambda_d} : \lambda_i \in p^{e(k,i)} \Z_p \}. \]
	Therefore $\{ h_i^{p^{e(k,i)}} : i \in I_1 \cup \dots \cup I_k\}$ generates the $\F_p$ vector space $H\cap G_k / H \cap G_{k+1}$.
	As $\pi_n : G_n / G_{n+1} \rightarrow \F_p^d$ is an isomorphism of vector spaces, the set $\{ \pi_k( h_i^{p^{e(k,i)}}) : i \in I_1 \cup \dots \cup I_k \}$ generates $\pi_k(H \cap G_{k+1})$.
	Set $m = | I_1 \cup \dots \cup I_k|$. Note that the  $\pi_i^{p^{e(k,i)}}, i = 1, \dots m$ are linearly independent if and only if there are no $\lambda_1, \dots \lambda_m \in \Z_p \setminus p \Z_p$ such that
	$\pi_k( h_1^{\lambda_1 p^{e(k,1)}} \cdots h_d^{\lambda_d p^{e(k,d)}} ) = 0$, or equivalently
	$\omega(h_1^{\lambda_1 p^{e(k,1)}} \cdots h_d^{\lambda_d p^{e(k,d)}}) > k$.
	But $\nu(\lambda_i) = 0$ for all $\lambda_i \in \Z_p \setminus p \Z_p$ and hence
	\[ \omega(h_1^{\lambda_1 p^{e(k,1)}} \cdots h_d^{\lambda_d p^{e(k,d)}}) = \min\{ \omega(h_i) + \nu(\lambda_i) + e(k,i) \} = k.\]
	It remains to show that $H$ is open. By (b), $\pi_n(H  \cap G_n)$ has dimension $d$ for sufficiently large $n$.
	Then $\pi_n(H \cap G_k) = \F_p^d = \pi_n(G_n)$ and hence $(H \cap G_n)G_{n+1} = G_n$. But then $H \cap G_n = G_n$ as $G_{n+1}$ is the set of nongenerators of $G_n$. Therefore, $H$ contains the open subgroup $G_n$ and is thus open.
\end{proof}

\section{Compact $p$-adic analytic groups}
\subsection{$p$-adic analytic groups}
In this section we will explain the notions of $p$-adic analytic manifolds and $p$-adic analytic groups.
We follow Section 8 in \cite{dixon}.

Let $r \geq 0$ be a positive integer. A basis for the topology of $\Z_p^r$ is given by the sets
\[B(y,p^{-h}) = \{ z \in \Z_p^r : |z_i - y_i| \leq p^{-h} \text{ for } i = 1, \dots, r \} = \{ y+p^hx : x \in \Z_p^r \}\]
where $y \in \Z_p^r$, $h \geq 0$, and $| \cdot |$ is the usual norm on $\Z_p$. We are interested in functions that can be described in terms of formal power series over $\Q_p$. 

\begin{definition}
	Let $V \subseteq \Z_p^r$ be a nonempty open subset and $f : V \rightarrow \Z_p^s$ be a function with components $f = (f_1, \dots, f_s)$.
	\begin{itemize}
		\item[(a)] The function $f$ is analytic at $y \in V$ if there is $h \in \N$ such that $B(y,p^{-h}) \subseteq V$ and there are formal power series $F_i(X) \in \Q_p[[X]]$ such that
			\[f_i(y+p^hx) = F_i(x) \text{ for all } x \in \Z_p^r. \]
		\item[(b)] The function $f$ is analytic on $V$ if it is analytic at each point of $V$.
	\end{itemize}
\end{definition}

\begin{definition}
	Let $X$ be a topological space.
	\begin{itemize}
		\item[(a)] A \textit{chart} on $X$ is a tuple $(U,\phi,n)$ where $U \subseteq X$ is a non-empty open subset and $\phi$ is a homeomorphism from $U$ onto an open subset of $\Z_p^n$.
		\item[(b)] Two charts $(U,\phi,n)$ and $(V, \psi,m)$ are \textit{compatible} if both
			$\psi \circ \phi^{-1}$ and $\phi \circ \psi^{-1}$ are analytic functions on $\phi(U \cap V)$ respectively $\psi(U \cap V)$. 
		\item[(c)] An \textit{atlas} on $X$ is a family $\{ (U_i, \phi_i, n_i) : i \in I \}$ of pairwise compatible charts such that
			$X = \bigcup_{i \in I} U_i$.
	\end{itemize}
\end{definition}

As usual, two atlases $A$ and $B$ are compatible if every chart in $A$ is compatible with every chart in $B$. This is an equivalence relation on the set of atlases on $X$. Hence we can give the definition of a $p$-adic analytic manifold.

\begin{definition}
	\begin{itemize}
		\item[(a)] A $p$-adic analytic manifold is a topological space $X$ together with an equivalence class of compatible atlases on $X$.
		\item[(b)] A function $f : X \rightarrow Y$ between two $p$-adic analytic manifolds is \textit{analytic} if for each pair of charts $(U, \phi, n)$ of $X$ and $(V, \psi, m)$ of $Y$ the following hold:
		\begin{itemize}
			\item[(i)] $f^{-1}(V)$ is open, and
			\item[(ii)] $\psi \circ f \circ \phi^{-1}$ is analytic on $\phi(U \cap f^{-1}(V) )$.
		\end{itemize}
	\end{itemize}
\end{definition}

Note that 
any $f : X \rightarrow Y$  analytic function between analytic manifolds is continuous (see e.g. Lemma 8.13 of \cite{dixon}).

If $X$ and $Y$ are $p$-adic analytic manifolds then the space $X \times Y$ has naturally the structure of a $p$-adic analytic manifold (see \cite[Examples 8.9 (vi)]{dixon}). A $p$-adic analytic group is a $p$-adic analytic manifold equipped with analytic group operations.

\begin{definition}
	A $p$-adic analytic group $G$ is a topological group that is a $p$-adic analytic manifold such that both
	\[ (x,y) \mapsto x \cdot y \text{ and } x \mapsto x^{-1} \]
	are analytic functions.
\end{definition}

\subsection{Analytic structure on uniform groups}
By Lazard's theorem a topological group has the structure of a $p$-adic analytic group if and only if it has an open uniformly powerful pro-$p$ subgroup.

Let $G$ be a uniform pro-$p$ group, $d = \gpdim(G)$, and fix a generating set $\{x_1, \dots x_d \}$ for $G$.
By Proposition~\ref{free_module} the map
\[ \Z_p^d \rightarrow G, (\lambda_1, \dots \lambda_d) \mapsto x_1^{\lambda_1} \cdots x_d^{\lambda_d} \]
is a continuous bijection between $\Z_p^d$ and $G$ and hence a homeomorphism as both $G$ and $\Z_p^d$ are compact Hausdorff spaces.

This homeomorphism gives us a global chart on $G$ and hence we may view $G$ as a compact $p$-adic analytic manifold of dimension $d$. We cite two important facts.

\begin{theorem}[Theorem 8.18 of \cite{dixon}]
	$G$ is a compact $p$-adic analytic group with respect to the $p$-adic analytic manifold structure induced by the above homeomorphism $\Z_p^d \cong G$.
\end{theorem}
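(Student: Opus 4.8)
The plan is to exploit the global chart $\psi : \Z_p^d \to G$, $\psi(\lambda) = x_1^{\lambda_1}\cdots x_d^{\lambda_d}$, of Proposition~\ref{free_module}. Since the given manifold structure has this single chart (and the product chart $\psi\times\psi$ on $G\times G$), saying that $G$ is a $p$-adic analytic group is the same as saying that the coordinate maps $\mu := \psi^{-1}\circ m\circ(\psi\times\psi) : \Z_p^{2d}\to\Z_p^d$ of the multiplication and $\eta := \psi^{-1}\circ\iota\circ\psi : \Z_p^d\to\Z_p^d$ of the inversion are analytic. I would not attack $\mu$ and $\eta$ head on in these coordinates; instead I would first attach to $G$ a $\Z_p$-Lie algebra, observe that in the resulting linear coordinates the group operations are given by power series, and then transfer the conclusion back through an analytic change of variables.

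The first step is to build the Lie algebra $\mathcal{L}$ of $G$. Since $G$ is uniform and $p\neq 2$, Theorem~\ref{powerful_thm} gives that $x\mapsto x^{p^n}$ induces the isomorphisms $G_i/G_{i+1}\to G_{i+n}/G_{i+n+1}$, so one can extract $p^n$-th roots modulo deep terms of the lower $p$-series and pass to a limit; this lets one define $g\oplus h := \lim_n(g^{p^n}h^{p^n})^{1/p^n}$ and $[g,h]_{\mathcal{L}} := \lim_n[g^{p^n},h^{p^n}]^{1/p^{2n}}$. I would check, pulling everything back to the finite powerful quotients $G/G_{n+1}$ and using the maps $\pi_n$ together with $f_i(x(\lambda)G_{i+1}) = x(p\lambda)G_{i+2}$, that these limits exist, that $(G,\oplus)$ is a free $\Z_p$-module of rank $d$ whose scalar action is the power map $g\mapsto g^\lambda$, and that $[\cdot,\cdot]_{\mathcal{L}}$ is a $\Z_p$-bilinear Lie bracket. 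The crucial quantitative fact is $[G,G]_{\mathcal{L}}\subseteq pG$, which follows from powerfulness since $G_2 = G^p = pG$ in the module structure; equivalently, $\mathcal{L}$ is a powerful Lie $\Z_p$-algebra. Choosing a $\Z_p$-basis identifies $\mathcal{L}$ with $\Z_p^d$ and gives a second homeomorphism $\phi : \Z_p^d\to G$, the sought linear chart.

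The second step is Baker--Campbell--Hausdorff. Because the bracket lands in $pG$ and $p > 2$, the degree-$k$ term of the Hausdorff series $\Phi(u,v) = u\oplus v\oplus\tfrac12[u,v]_{\mathcal{L}}\oplus\cdots$ carries $k-1$ brackets, hence lies in $p^{k-1}\mathcal{L}$, which beats the denominators $\lcm(1,\dots,k)$ occurring there; so $\Phi$ converges for all $u,v\in\mathcal{L}$, and one checks that it recovers the group law, $g\cdot h = \Phi(g,h)$. In the basis of $\mathcal{L}$ this presents $m$ as a single power series over $\Q_p$, hence analytic, and $\Phi(u,\ominus u) = 0$ shows that the group inverse is the module negation $g\mapsto -g$, also analytic. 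Thus in the $\phi$-chart both group operations are analytic.

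The last step is to show that the transition map $T := \phi^{-1}\circ\psi : \Z_p^d\to\Z_p^d$ is an analytic isomorphism, which then finishes the proof by composition. Writing $e_i := \phi^{-1}(x_i)$, BCH again gives $T(\lambda) = \Phi(\lambda_1 e_1,\Phi(\lambda_2 e_2,\dots,\Phi(\lambda_{d-1}e_{d-1},\lambda_d e_d)\dots))$, a composite of the analytic map $\Phi$ with linear maps, so $T$ is analytic; its differential at every point is a composite of operators of the form $\mathrm{id} + p(\cdots)$ (invertible over $\Z_p$ because $\mathrm{ad}$ sends $\mathcal{L}$ into $p\mathcal{L}$) with $\mathrm{diag}(e_1,\dots,e_d)$, which is invertible since $\{x_1,\dots,x_d\}$ spans $G/G_2 = \mathcal{L}/p\mathcal{L}$ and hence, by Nakayama, $\{e_1,\dots,e_d\}$ is a $\Z_p$-basis. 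So $T$ has everywhere-invertible Jacobian and, being a bijection onto $\Z_p^d$, is a global analytic isomorphism by the $p$-adic inverse function theorem; then $\eta = T^{-1}\circ(g\mapsto -g)\circ T$ and $\mu = T^{-1}\circ\Phi\circ(T\times T)$ in the $\psi$-coordinates are analytic. The main obstacle is clearly the first two steps: setting up the Lazard correspondence for uniform pro-$p$ groups --- existence of the $p$-power limits, freeness of $(G,\oplus)$, the inclusion $[G,G]_{\mathcal{L}}\subseteq pG$, and convergence of $\Phi$ on all of $\mathcal{L}$ for odd $p$ --- is the genuine technical content, whereas the reduction and the transition-map argument are comparatively formal.
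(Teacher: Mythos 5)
The paper itself provides no proof for this statement --- it is explicitly cited from Dixon--du Sautoy--Mann--Segal (``We cite two important facts''), so the only meaningful comparison is with the proof given in that reference. Your proposal goes through the Lazard correspondence: build the $\Z_p$-Lie algebra $\mathcal{L}$ via the limit formulas $g\oplus h=\lim(g^{p^n}h^{p^n})^{1/p^n}$, $[g,h]_{\mathcal{L}}=\lim[g^{p^n},h^{p^n}]^{1/p^{2n}}$, check that $\mathcal{L}$ is a free module and powerful ($[\mathcal{L},\mathcal{L}]\subseteq p\mathcal{L}$), invoke Baker--Campbell--Hausdorff, and transport analyticity back through the change of chart $T=\phi^{-1}\circ\psi$. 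That is a genuinely different route from the one used in the cited reference: there, Theorem~8.18 is proved directly in the $\psi$-coordinates, by showing that the coordinate functions of the product $\psi(\lambda)\psi(\mu)$ agree modulo $G_{n+1}$ with $\Z_p$-polynomials of controlled degree and then passing to a convergent power series, with no Lie algebra in sight; the Lie-theoretic machinery you use is developed only later, in their Chapter~9. Your route buys much more than the stated theorem (the whole Lazard dictionary, the exponential/logarithm, etc.), but the cost is that your Step~1 and the BCH convergence estimate constitute an independent technical development at least as heavy as the direct proof, and here they are only sketched. Two small points of care, neither fatal: in the Jacobian argument ``$\mathrm{diag}(e_1,\dots,e_d)$'' should be the $d\times d$ matrix whose \emph{columns} are the $e_i$ (the change-of-basis matrix, invertible by Nakayama as you say); and invertibility of $\D T$ at a general point needs the observation that the linear part of $\Phi$ is $u+v$ while every higher homogeneous component lands in $p\mathcal{L}$, which is exactly what makes $\D T$ congruent to the change-of-basis matrix modulo $p$.
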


Moreover, continuous homomorphisms and analytic homomorphisms between $p$-adic analytic groups coincide by the following theorem.

\begin{theorem}[Theorem 9.4 of \cite{dixon}] \label{cont_analytic}
	Every continuous homomorphism between $p$-adic analytic groups is analytic.
\end{theorem}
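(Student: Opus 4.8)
The plan is to localize the problem to open uniform pro-$p$ subgroups and then to observe that, on such groups, a continuous homomorphism is forced to be $\Z_p$-linear for the associated $\Z_p$-module structures, hence polynomial, hence analytic.

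First I would reduce to the uniform case. Analyticity is a local property, and for a $p$-adic analytic group the left translations $x\mapsto ax$ are analytic automorphisms of the underlying manifold; since $\phi(ax)=\phi(a)\phi(x)$, it suffices to show that $\phi$ is analytic on an open neighbourhood of $1\in G$. By Lazard's theorem $H$ has an open uniform pro-$p$ subgroup $H_0$, so $\phi^{-1}(H_0)$ is an open neighbourhood of $1$ in $G$. Also $G$ contains an open uniform pro-$p$ subgroup $U$ (again Lazard); then $U\cap\phi^{-1}(H_0)$ is an open --- hence finite-rank --- pro-$p$ subgroup, so it contains an open uniform subgroup $G_0$, which is then open in $G$ and satisfies $\phi(G_0)\subseteq H_0$. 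Writing $d=\gpdim(G_0)$, $e=\gpdim(H_0)$, the global charts $\psi_{G_0}^{-1}\colon G_0\to\Z_p^d$ and $\psi_{H_0}^{-1}\colon H_0\to\Z_p^e$ are charts for the analytic structures of $G$ and $H$ (the analytic manifold structure of a $p$-adic analytic group restricts to the canonical one on an open uniform subgroup), so it suffices to prove that $\phi|_{G_0}\colon G_0\to H_0$ is analytic.

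Next I would pass to the $\Z_p$-Lie algebra $L(G_0)$ attached to the uniform group $G_0$ by Lazard's correspondence (\cite{dixon}): it has the same underlying topological space as $G_0$, so $L(G_0)\cong\Z_p^d$ as a $p$-adic analytic manifold, and its operations are $v+w=\lim_n(v^{p^n}w^{p^n})^{p^{-n}}$ and $[v,w]=\lim_n[v^{p^n},w^{p^n}]^{p^{-2n}}$, where $[\cdot,\cdot]$ is the group commutator and $(\cdot)^{p^{-n}}$ denotes the unique $p^n$-th root (well defined since a uniform group is torsion-free and, by Theorem~\ref{powerful_thm}, $x\mapsto x^{p^n}$ is a bijection of $G_0$ onto $P_{n+1}(G_0)$). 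I need two facts. (1) The identity maps $G_0\to L(G_0)$ and $L(G_0)\to G_0$ are analytic: the group multiplication is recovered from $+$ and $[\cdot,\cdot]$ by the Baker--Campbell--Hausdorff series, which converges on $\Z_p$-valued arguments because $p\neq2$, and conversely $+$ and $[\cdot,\cdot]$ are given by convergent power series in the global coordinates; so $G_0$ and $L(G_0)$ carry the same analytic structure. (2) The map $\phi|_{G_0}$ commutes with $\Z_p$-powers and with $p^n$-th roots: $\phi(g^m)=\phi(g)^m$ for $m\in\Z$, hence $\phi(g^\lambda)=\phi(g)^\lambda$ for all $\lambda\in\Z_p$ by continuity together with the definition of $g^\lambda$ as a limit of integer powers; and if $x^{p^n}=y$ in $G_0$ then $\phi(x)^{p^n}=\phi(y)$ with $\phi(x)\in H_0$, so $\phi(x)$ is the $p^n$-th root of $\phi(y)$ in $H_0$.

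Given these, $\phi|_{G_0}$ respects the limits defining $+$ and $[\cdot,\cdot]$, hence induces a homomorphism of $\Z_p$-Lie algebras $L(G_0)\to L(H_0)$; in particular it is $\Z_p$-linear, i.e.\ multiplication by a matrix over $\Z_p$, hence a polynomial --- so analytic --- map $\Z_p^d\to\Z_p^e$. Composing with the analytic identifications $G_0\cong L(G_0)$ and $L(H_0)\cong H_0$ from (1), we conclude that $\phi|_{G_0}$, and hence $\phi$, is analytic. The main obstacle is fact (1): that the group structure and the $\Z_p$-Lie algebra structure on a uniform pro-$p$ group define the same $p$-adic analytic manifold. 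This is the technical heart of Lazard's correspondence as developed in \cite{dixon}; it rests on controlling the convergence of the Baker--Campbell--Hausdorff series and of the limits defining $+$ and $[\cdot,\cdot]$ in the global coordinates, and it is exactly here that the hypothesis $p\neq2$ is used (for $p=2$ one first passes to a suitable open subgroup). The remaining steps --- the localization via translations, and the observation that continuous homomorphisms commute with $\Z_p$-powers and $p^n$-th root maps --- are essentially formal.
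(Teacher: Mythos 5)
The paper itself gives no proof of this statement: it is cited verbatim from Dixon--du~Sautoy--Mann--Segal (Theorem 9.4 of \cite{dixon}), so there is no in-paper argument to compare against. As a blind reconstruction, your proof is sound in outline and follows the Lazard-correspondence strategy. The reduction to uniform subgroups via translations is fine, as is the observation that a continuous group homomorphism commutes with $\Z_p$-powers and with $p^n$-th roots and therefore with the limits defining $+$ and $[\cdot,\cdot]$, so that it induces a $\Z_p$-linear (hence polynomial, hence analytic) map on the associated Lie algebras. You are right to flag that the genuine content sits entirely in your ``fact (1)'' -- that the group chart and the Lie-algebra chart on a uniform group are analytically compatible, which amounts to convergence of BCH and its inverse -- and you defer this to \cite{dixon}, exactly as the paper defers the whole theorem. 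Two small remarks. First, one can avoid the Lie algebra altogether: since $\phi$ commutes with $\Z_p$-exponentiation, in the global coordinates of Proposition~\ref{free_module} one has $\phi(x_1^{\lambda_1}\cdots x_d^{\lambda_d})=\phi(x_1)^{\lambda_1}\cdots\phi(x_d)^{\lambda_d}$, which is analytic in $\lambda$ once one knows (from setting up the analytic structure on uniform groups) that multiplication and the map $\lambda\mapsto z^\lambda$ are analytic in these coordinates; this is the more economical route and is essentially the book's. Second, the citation for uniqueness/bijectivity of $p^n$-th roots should not be Theorem~\ref{powerful_thm} alone -- that gives surjectivity onto $P_{n+1}(G_0)$, while injectivity (equivalently torsion-freeness of a uniform group) is a separate result in \cite{dixon} that you should invoke explicitly.
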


\section{ NIP and NTP\textsubscript{2}}
We now turn to the model theoretic side and recall some definitions around the independence property and the tree property.
We consider formulas $\phi(x,y)$ where $x ,y$ are tuples of variables.

\begin{definition}
	Suppose $\mathcal{L}$ is a language, $T$ is a complete $\mathcal{L}$-theory and $\phi(x, y)$ is an $\mathcal{L}$-formula.
	The formula $\phi(x, y)$ has the \textit{independence property} (IP) if there is a model
	$M \models \mathcal{T}$ and constants $(a_i: i<\omega)$, $(b_A : A \subseteq \mathbb{N} )$ such that
	$M \models \phi(a_i,b_A) \iff i \in A$.		
	The theory $T$ has NIP if no formula in $T$ has IP.
\end{definition}

We say that a model $M$ has NIP if its theory $\Th(M)$ has NIP.
By the compactness theorem, a formula $\phi(x,y)$ has IP if and only if for all finite sets $F \subseteq \N$ there are $(a_i : i \in F)$ and $(b_A : A \subseteq F)$ such that
\[ \forall i \in F, A \subseteq F : M \models \phi(a_i,b_A) \iff i \in A. \]
If $F$ is fixed, this is a finite condition and does not depend on the model.

Furthermore also by compactness,  a formula  $\phi(x,y)$  has NIP if and only if the formula $\phi^{\text{opp}}(y,x) = \phi(x,y)$ has NIP.

The model theoretic notion of NIP is closely connected to the combinatorial Vapnik-Chervonenkis dimension, see e.g. Section 6.1 in \cite{simon}.

\begin{definition}
	Let $X$ be a set and let $\mathcal{S}$ be a family of subsets of $X$.
	\begin{itemize}
		\item[(a)] A subset $A \subseteq X$ is \emph{shattered} by $\mathcal{S}$ if for every $A' \subseteq A$ there is $S \in \mathcal{S}$ such that $A' = S \cap A$.
		\item[(b)] We say $\mathcal{S}$ has \emph{Vapnik-Chervonenkis dimension} $n$, $\VC(\mathcal{S}) = n$, if it shatters a subset of size $n$ and does not shatter any subset of size $n+1$. If $\mathcal{S}$ shatters a subset of size $n$ for all $n$, then we say that $\mathcal{S}$ has infinite Vapnik-Chervonenkis dimension.
	\end{itemize}
\end{definition}

If $M$ is a model and $\phi(x,y)$ is a formula, we consider the set $X = M$ together with the family $\mathcal{S}_\phi = \{ \phi(M,b) : b \in M\}$. Then $\phi(x,y)$ has NIP if and only if $\mathcal{S}_\phi$ has finite VC-dimension. The VC-dimension of $\mathcal{S}_\phi$ does not depend on the choice of the model $M$ and we will denote it by $\VC(\phi(x,y))$.

The Baldwin-Saxl lemma states an important property for families of uniformly definable subgroup (see, for example, \cite[Theorem 2.13]{simon}).

\begin{lemma}[Baldwin-Saxl lemma] \label{baldwin_saxl}
	 Suppose $G$ is a $\emptyset$-definable group and $\phi(x,y)$ has NIP. Put $k = \VC(\phi^\mathrm{opp}(y,x))$.
	Suppose that $(H_i : i \in I)$ is a family of subgroups $H \leq G$ such that each $H_i$ can be defined by some instance of $\phi$.
	Then for all finite subsets $I_f \subseteq I$, there are $i_1, \dots, i_k \in I_f$ such that
	\[ \bigcap_{i \in I_f} H_i = H_{i_1} \cap \dots \cap H_{i_k}. \]
\end{lemma}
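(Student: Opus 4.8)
The plan is to run the classical Baldwin--Saxl argument, turning the group-theoretic statement into a shattering configuration for $\phi^{\mathrm{opp}}$. First I would make a finite $I_f$ irredundant: given $I_f$, choose $J\subseteq I_f$ minimal with respect to inclusion such that $\bigcap_{i\in J}H_i=\bigcap_{i\in I_f}H_i$. Minimality says that for every $j\in J$ we have $\bigcap_{i\in J\setminus\{j\}}H_i\supsetneq\bigcap_{i\in J}H_i$. If I can show $|J|\le k$, I am done: listing $J=\{j_1,\dots,j_m\}$ with $m\le k$ and padding the list to length $k$ with repetitions of some element of $J$ (or with arbitrary further elements of $I_f$, which changes nothing since $\bigcap_{i\in J}H_i$ is already contained in every $H_i$, $i\in I_f$) produces the required $i_1,\dots,i_k\in I_f$.

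So the core is: if $J=\{j_0,\dots,j_n\}$ is irredundant in the above sense, then $n+1=|J|\le k$. For each $l\in\{0,\dots,n\}$, irredundancy lets me pick $g_l\in\bigcap_{s\ne l}H_{j_s}\setminus\bigcap_sH_{j_s}$, so $g_l\in H_{j_s}$ for all $s\ne l$ but $g_l\notin H_{j_l}$; all these elements live in the single model $M$ where $G$ is $\emptyset$-definable, so no saturation is needed. For $S\subseteq\{0,\dots,n\}$ form the product $g_S=\prod_{l\in S}g_l$ (say in increasing order of $l$). The key computation, which is the heart of the proof, is that $g_S\in H_{j_l}$ if and only if $l\notin S$: if $l\notin S$ then every factor of $g_S$ lies in $H_{j_l}$, hence so does the product; if $l\in S$ then $g_S$ is a product of elements of $H_{j_l}$ with exactly one factor, namely $g_l$, outside $H_{j_l}$, and since $H_{j_l}$ is a \emph{subgroup} its cosets are permuted by left and right multiplication by elements of $H_{j_l}$, so $g_S\notin H_{j_l}$.

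Now write each $H_{j_l}$ as $\phi(M,b_l)$ for a suitable parameter tuple $b_l$, which is possible because each $H_i$ is an instance of $\phi$. Then $M\models\phi(g_S,b_l)\iff g_S\in H_{j_l}\iff l\notin S$, so $\{\,b\in\{b_0,\dots,b_n\}:M\models\phi(g_S,b)\,\}=\{b_l:l\notin S\}$. As $S$ ranges over all subsets of $\{0,\dots,n\}$ the set $\{b_l:l\notin S\}$ ranges over all subsets of $\{b_0,\dots,b_n\}$, so $\{b_0,\dots,b_n\}$ is shattered by $\mathcal{S}_{\phi^{\mathrm{opp}}}=\{\phi^{\mathrm{opp}}(M,a):a\in M\}$. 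Hence $\VC(\phi^{\mathrm{opp}}(y,x))\ge n+1$, i.e. $k\ge n+1=|J|$, completing the argument. The NIP hypothesis is used only through the finiteness of $k=\VC(\phi^{\mathrm{opp}})$; the one thing to be careful about is the bookkeeping — keeping the roles of $\phi$ and $\phi^{\mathrm{opp}}$ (and hence which variable is shattered) straight, and ensuring the product trick genuinely uses that the $H_i$ are subgroups.
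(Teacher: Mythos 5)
Your proof is correct and follows the same strategy as the paper: reduce to an irredundant family, pick witnesses $g_l$ for the irredundancy, and use the ordered product $g_S=\prod_{l\in S}g_l$ (equivalently the paper's $a_F$ with $F$ the complement of $S$) to shatter the parameter tuples defining the $H_{j_l}$, contradicting $\VC(\phi^{\mathrm{opp}})=k$. The only difference is cosmetic: you explicitly pass to a minimal subfamily $J$ and bound $|J|$, whereas the paper phrases it as a direct contradiction from a size-$(k+1)$ irredundant set; the group-theoretic trick and the VC bookkeeping are identical.
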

\begin{proof}
	Suppose the lemma fails. Then there is some subset $I_f = \{i_0, \dots , i_k \} \subseteq I$ of size $k+1$ 
	such that $\bigcap_{i \in I_f} H_i \subsetneq \bigcap_{i \in I_f \setminus \{i_n\}} H_i$
	for all $n \in \{ 0, \dots, k \}$. Fix some $a_n \in \bigcap_{i \in I_f \setminus \{i_n\}} H_i \setminus \bigcap_{i \in I_f} H_i$ for each $n$. Note that $a_n \in H_{i_t} \iff n \neq t$.
	For $F \subseteq \{0, \dots k\}$ put
	\[ a_F = \prod_{n \in \{0, \dots k\} \setminus F} a_n. \]
	Then $a_F \in H_{i_t} \iff t \in F$ and hence $\phi^\text{opp}(y,x)$ has VC dimension at least $k+1$. This contradicts the assumption.
\end{proof}

If $X$ is a set and $\mathcal{S}$ is a family of subsets, the \emph{shatter function} is given by
\[ \pi_\mathcal{S}(n) = \max\{ |\{S \cap A : S \in \mathcal{S} \} | : A \subseteq X, |A| \leq n \}. \]
We have $\pi_\mathcal{S}(n) = 2^n$ if and only if there is a set $A \subseteq X$ of size at most $n$ that is shattered by $\mathcal{S}$. Hence $VC(\mathcal{S}) = n$ if and only if $n$ is maximal such that $\pi_\mathcal{S}(n) = 2^n$ and $\mathcal{S}$ has infinite VC dimension if and only if $\pi_\mathcal{S}(n) = 2^n$ for all $n$.
By the Sauer-Shelah lemma (see, for example, \cite[Lemma 6.4]{simon}), either $\pi_\mathcal{S}(n) = 2^n$ for all $n$ or the shatter function is bounded by some polynomial.

\begin{lemma}[Sauer-Shelah lemma] \label{sauer_shelah}
	Suppose $\mathcal{S}$ has VC-dimension at most $k$. Then
	\[ \pi_\mathcal{S}(n) \leq \sum_{i=0}^{k} \binom{n}{i} \]
	for all $n \geq k$. In particular, there is a constant $C$ such that $\pi_\mathcal{S}(n) \leq C n^k$ for all $n$.
\end{lemma}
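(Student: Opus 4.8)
The plan is to strip the statement down to a finite combinatorial fact about set systems and prove that by induction on the size of the ground set.

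First I would fix $n$ and choose a set $A \subseteq X$ with $|A| = n$ for which $\pi_\mathcal{S}(n) = |\{ S \cap A : S \in \mathcal{S}\}|$; such an $A$ exists, since this quantity is a maximum of integers bounded by $2^n$ and it never decreases when a point is added to $A$. Replacing $\mathcal{S}$ by its trace $\mathcal{F} := \{ S \cap A : S \in \mathcal{S}\}$, a family of subsets of the finite set $A$, we have $|\mathcal{F}| = \pi_\mathcal{S}(n)$, and a subset of $A$ is shattered by $\mathcal{F}$ if and only if it is shattered by $\mathcal{S}$, so $\VC(\mathcal{F}) \le \VC(\mathcal{S}) \le k$. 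It therefore suffices to prove: \emph{if $\mathcal{F}$ is a family of subsets of an $n$-element set $A$ that shatters no subset of size $k+1$, then $|\mathcal{F}| \le \sum_{i=0}^{k}\binom{n}{i}$.}

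I would prove this by induction on $n$, the case $n = 0$ being trivial. For the step, fix $a \in A$, set $A' = A \setminus \{a\}$, and split $\mathcal{F}$ into $\mathcal{F}_1 := \{ F \cap A' : F \in \mathcal{F}\}$, the trace on $A'$, and $\mathcal{F}_2 := \{ F \subseteq A' : F \in \mathcal{F} \text{ and } F \cup \{a\} \in \mathcal{F}\}$. A count gives $|\mathcal{F}| = |\mathcal{F}_1| + |\mathcal{F}_2|$: each $G \in \mathcal{F}_1$ equals $F \cap A'$ for one or two members $F$ of $\mathcal{F}$, and the ``two'' case occurs exactly when $G \in \mathcal{F}_2$. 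Now $\mathcal{F}_1$ lives on the $(n-1)$-set $A'$ and still shatters no $(k+1)$-set, so the induction hypothesis gives $|\mathcal{F}_1| \le \sum_{i=0}^{k}\binom{n-1}{i}$; and $\mathcal{F}_2$ lives on $A'$ and shatters no $k$-set, since if it shattered some $B \subseteq A'$ with $|B| = k$, then for any $B' \subseteq B \cup \{a\}$ one could pick $F \in \mathcal{F}_2$ with $F \cap B = B' \cap B$, and then one of $F, F \cup \{a\} \in \mathcal{F}$ has trace $B'$ on $B \cup \{a\}$, so $\mathcal{F}$ would shatter the $(k+1)$-set $B \cup \{a\}$. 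Hence $|\mathcal{F}_2| \le \sum_{i=0}^{k-1}\binom{n-1}{i}$, and adding the two bounds and applying Pascal's identity $\binom{n-1}{i} + \binom{n-1}{i-1} = \binom{n}{i}$ yields $|\mathcal{F}| \le \sum_{i=0}^{k}\binom{n}{i}$.

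The ``in particular'' clause is then routine: $\binom{n}{i} \le n^k$ for $0 \le i \le k \le n$, so $\pi_\mathcal{S}(n) \le (k+1)n^k$ when $n \ge k$, while for the finitely many $n < k$ one has $\pi_\mathcal{S}(n) \le 2^n \le 2^k$; taking $C = \max\{k+1, 2^k\}$ works for all $n$. The only step needing genuine care is the inductive step — both the identity $|\mathcal{F}| = |\mathcal{F}_1| + |\mathcal{F}_2|$ and, above all, the verification that passing from $\mathcal{F}$ to $\mathcal{F}_2$ drops the VC-dimension by one, which is precisely where the hypothesis ``no shattered $(k+1)$-set'' is spent. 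If one prefers to avoid this bookkeeping, an alternative is to first establish the sharper Pajor--Frankl inequality that $|\mathcal{F}|$ is at most the number of subsets of $A$ that $\mathcal{F}$ shatters (by essentially the same induction, or by an element-compression argument that never increases the number of shattered sets), and then note that a family of VC-dimension $\le k$ can shatter only the $\sum_{i=0}^k \binom{n}{i}$ subsets of $A$ of size at most $k$.
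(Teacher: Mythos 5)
Your proof is correct. The paper itself gives no proof of this lemma and simply cites it to Simon's book (Lemma 6.4 there), so there is nothing in the source to compare against; what you have written is the classical trace-decomposition induction going back to Sauer, Shelah, and Vapnik--Chervonenkis. The reduction to a finite set system, the split $|\mathcal{F}| = |\mathcal{F}_1| + |\mathcal{F}_2|$, the verification that $\mathcal{F}_2$ drops the VC-dimension by one, and the Pascal-identity bookkeeping are all carried out correctly, and the derivation of the polynomial bound from $\binom{n}{i} \le n^k$ together with the trivial $\pi_\mathcal{S}(n) \le 2^n$ for $n < k$ is fine. The closing remark about the Pajor--Frankl sharpening is a reasonable alternative route, but it is not needed here and is not what the paper's reference uses.
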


Another model theoretic property is the tree property of second kind (TP\textsubscript{2}). By \cite[Proposition 5.31]{simon} every theory that has TP\textsubscript{2} also has IP. Like the independence property, TP\textsubscript{2} can be formulated as a combinatorial condition on formulas.

\begin{definition}
	Suppose $\mathcal{L}$ is a language, $T$ is an $\mathcal{L}$-theory and $\phi(x, y)$ is an $\mathcal{L}$-formula.
	$\phi(x, y)$ has TP\textsubscript{2} if there is some $k \in \mathbb{N}$, some model $M \models \mathcal{T}$, and a family of constants $(a_{ij}: i,j \in \mathbb{N})$ such that
	\begin{itemize}
		\item[(a)] for each $i \in \mathbb{N}$ the formulas $(\phi(x, a_{ij}) : j \in \mathbb{N})$ are $k$-inconsistent, i.e. any conjunction of $k$ distinct such formulas is inconsistent, and
		\item[(b)] for any function $f : \mathbb{N} \rightarrow \mathbb{N}$, the set $\{ \phi(x,a_{i,f(i)}) : i \in \mathbb{N} \}$ is consistent.
	\end{itemize}
	The theory $T$ is NTP\textsubscript{2} if no formula in $T$ has TP\textsubscript{2}.
\end{definition}

We say that a model $M$ has NTP\textsubscript{2} if its theory $\Th(M)$ has NTP\textsubscript{2}.
By the compactness theorem a formula has TP\textsubscript{2} if and only if conditions (a) and (b) are satisfied for arbitrary large finite subsets of $\N$.

\section{Interpretablility of uniform pro-$p$ groups}
We introduce the structure $\Z_p^\text{an}$ which was studied by du Sautoy in \cite{sautoy}.
It follows from results by van den Dries, Haskell, and Macpherson in \cite{haskell} that the structure $\Z_p^\text{an}$ has NIP.
Let $X = (X_1, \dots X_m)$ be a tuple of variables. Given $i = (i_1, \dots i_m) \in \N^m$ we write $X^i = X^{i_1} \cdots X^{i_m}$ and $|i| = i_1 + \dots + i_m$.
Then
\[ \Q_p[[X]] = \{ \sum_{i \in \N^m} a_i X^i : a_i \in \Q_p \} \]
is the ring of formal power series with coefficients in $\Q_p$. $\Z_p[[X]]$ is the subring consisting of all formal power series
with coefficients in $\Z_p$. Let
\[ \Q_p\{X\} = \{ \sum_{i \in \N^m} a_i X^i : a_i \in \Q_p, |a_i | \rightarrow 0 \text{ as } |i| \rightarrow \infty \} \]
and set $\Z_p\{X\} = \Q_p\{X\} \cap \Z_p[[X]]$.

\begin{definition}
	The language $\mathcal{L}_D^\text{an}$ consists of
	\begin{itemize}
		\item[(a)] for every $m$ and every $F(X) \in \Z_p\{X\}$ an $m$-ary function symbol $F$,
		\item[(b)] a binary function symbol $D$, and
		\item[(c)] a unary relation symbol $P_n$ for every $n > 0$.
	\end{itemize}
\end{definition}

We can view $\Z_p$ as an $\mathcal{L}_D^\text{an}$-structure: Let $\nu$ be the usual valuation on $\Z_p$. The relation $P_n$ is given by the set of nonzero $n$-th powers,
the binary function $D$ is given by
\[ D(x,y) = \begin{cases} x/y & \text{if } \nu(x) \geq \nu(y) \text{ and } y \neq 0 \\ 0 & \text{otherwise} \end{cases} \]
and each function $F$ is the function induced by the corresponding power series $F(X)$.
Set $T_D^\text{an} = \Th(\Z_p^\text{an} )$.

Haskell and Macpherson give a definition for P-minimal theories in \cite{pminimal}. By Proposition 7.1 of \cite{pminimal} every P-minimal theory has NIP. Theorem A of \cite{haskell} essentially states that the theory of $\Z_p^\text{an}$ is P-minimal. Therefore we have the following:

\begin{theorem}[Theorem 3.1 of \cite{tent}]
	The theory $T_D^\text{an}$ has NIP.
\end{theorem}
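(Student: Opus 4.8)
The plan is to derive NIP from two facts already in the literature: Proposition 7.1 of \cite{pminimal}, according to which every P-minimal theory has NIP, and Theorem A of \cite{haskell}, the cell decomposition theorem for $p$-adic subanalytic sets, which in particular yields that the subanalytic structure is P-minimal. Recall that a theory extending the theory of $p$-adically closed fields is \emph{P-minimal} if in every model every subset of the field that is definable with parameters is already semialgebraic, i.e.\ definable in Macintyre's language of valued fields. Since NIP is inherited by reducts and, more generally, by structures interpretable in an NIP structure, it suffices to realise $\Z_p^\text{an}$ inside a P-minimal expansion of $\Q_p$.

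First I would spell out that $\Z_p^\text{an}$ and the $p$-adic subanalytic structure on $\Q_p$ of \cite{haskell} are mutually interpretable. On one side, $\Z_p$ is $\emptyset$-definable in $\Q_p$; the operations $+$, $\cdot$ and every function $F$ with $F \in \Z_p\{X\}$ are among the primitives of the subanalytic structure (polynomials are restricted power series, and a restricted power series defines a subanalytic function on $\Z_p^m$, extended by $0$ elsewhere); the relations $P_n$ are $\emptyset$-definable in the field language; and the restricted-division function $D$ is $\emptyset$-definable once division is available. Hence $\Z_p^\text{an}$ is a reduct of the structure induced on the definable set $\Z_p$ by the subanalytic structure on $\Q_p$, so $T_D^\text{an}$ is interpretable in the theory of the subanalytic $\Q_p$. (Conversely, $D$ together with the ring operations lets one define the fraction field of $\Z_p$ inside $\Z_p^\text{an}$, recovering $\Q_p$ with its subanalytic structure, so in fact the two are bi-interpretable; only the direction stated is needed here.)

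It then remains to combine the ingredients: Theorem A of \cite{haskell} gives that the subanalytic structure on $\Q_p$ is P-minimal, Proposition 7.1 of \cite{pminimal} upgrades this to NIP for its theory, and transferring NIP along the interpretation above gives that $T_D^\text{an}$ has NIP. The one point that needs genuine care---the content hidden behind the word ``essentially'' in the paragraph preceding the theorem---is matching the cell decomposition of \cite{haskell} with the precise formulation of P-minimality used in \cite{pminimal} (for the complete theory, in all of its models, with parameters allowed), and checking that the function symbols of $\mathcal{L}_D^\text{an}$ generate exactly the subanalytic subsets. The remaining facts---definability of $\Z_p$, of $D$, and of the $P_n$ in $\Q_p$, and preservation of NIP under reducts and interpretations---are routine.
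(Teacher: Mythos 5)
Your proposal takes essentially the same route as the paper: the paper also cites Proposition 7.1 of \cite{pminimal} (P-minimal implies NIP) together with Theorem A of \cite{haskell} (which ``essentially states'' that $\Th(\Z_p^\text{an})$ is P-minimal) and concludes. You have merely spelled out the interpretability bookkeeping between $\Z_p^\text{an}$ and the subanalytic structure on $\Q_p$ that the paper leaves implicit.
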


We will need the following two facts. The first uses topological compactness of $\Z_p$, the second is an application of Hensel's lemma.

\begin{lemma}[Lemma 1.9 of \cite{sautoy}]
	Every analytic function is definable in $\Z_p^\text{an}$.
\end{lemma}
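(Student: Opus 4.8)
The plan is to prove that the graph $\Gamma_f = \{(z, f(z)) : z \in V\} \subseteq \Z_p^{r+s}$ of an analytic function $f = (f_1,\dots,f_s) : V \to \Z_p^s$ is a quantifier-free $\mathcal{L}_D^{\mathrm{an}}$-definable set. First I would carry out the local analysis at a single point. Fix $y \in V$; by the definition of analyticity there is $h = h(y) \in \N$ with $B(y,p^{-h}) \subseteq V$ and power series $F_1,\dots,F_s \in \Q_p[[X_1,\dots,X_r]]$ such that $f_i(y + p^h x) = F_i(x)$ for all $x \in \Z_p^r$ and all $i$. Since the left-hand side is defined for every $x \in \Z_p^r$, each $F_i$ converges on $\Z_p^r$; evaluating at $x = (1,\dots,1)$ and using that a series over a complete ultrametric field converges iff its terms tend to $0$, the coefficients of $F_i$ tend to $0$, so $F_i \in \Q_p\{X\}$. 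In particular those coefficients are bounded, so there is $N_i \in \N$ with $G_i := p^{N_i} F_i \in \Z_p\{X\}$; thus each $G_i$ is a function symbol of $\mathcal{L}_D^{\mathrm{an}}$.

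Next I would assemble a term computing $f_i$ on that ball. For $z \in B(y,p^{-h})$ one has $\nu(z_k - y_k) \ge h = \nu(p^h)$, so $D(z_k - y_k, p^h)$ equals the $k$-th coordinate of $x = p^{-h}(z - y)$; here $z_k - y_k$ is itself an $\mathcal{L}_D^{\mathrm{an}}$-term (using the constant $y_k \in \Z_p$ and the restricted polynomial $X_1 - X_2 \in \Z_p\{X_1,X_2\}$), and $p^h$ is a constant term. Hence $G_i\bigl(D(z_1 - y_1, p^h),\dots,D(z_r - y_r, p^h)\bigr)$ is a term evaluating to $G_i(x) = p^{N_i} f_i(z)$. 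Because $f_i(z) \in \Z_p$ we have $\nu(G_i(x)) \ge N_i = \nu(p^{N_i})$, so one more application of $D$ recovers $f_i(z)$; call the resulting term $t_{y,i}(z)$. Since $B(y,p^{-h})$ is itself quantifier-free definable (each condition $\nu(z_k - y_k) \ge h$ is, via $p^h D(w,p^h) = w$), the piece $\Gamma_f \cap \bigl(B(y,p^{-h}) \times \Z_p^s\bigr)$ is defined by the quantifier-free formula asserting $z \in B(y,p^{-h})$ and $w_i = t_{y,i}(z)$ for $i = 1,\dots,s$.

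Finally I would invoke compactness. The balls $B(y, p^{-h(y)})$, $y \in V$, cover $V$, and this is where topological compactness of $\Z_p$ enters: finitely many of them, around $y^{(1)},\dots,y^{(m)}$ say, already cover $V$. Since all of the terms $t_{y^{(j)},i}$ compute the same function $f_i$ on the overlaps of the balls, $\Gamma_f$ is the finite union $\bigcup_{j=1}^m \Gamma_f \cap \bigl(B(y^{(j)},p^{-h_j}) \times \Z_p^s\bigr)$ of the definable pieces above, hence quantifier-free definable; so $f$ is definable in $\Z_p^{\mathrm{an}}$.

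I expect the point to watch to be the reduction to finitely many balls: it requires the domain $V$ to be compact, equivalently a finite union of balls, which is exactly the situation in which the lemma is applied (e.g.\ $V = \Z_p^r$, or $V$ a compact open subset, as for the group operations of a uniform pro-$p$ group). Some such restriction is unavoidable, since there are uncountably many open subsets of $\Z_p^r$ but only countably many definable sets. The remaining ingredients — that convergence of $F_i$ on $\Z_p^r$ forces it to be restricted, that passing to $G_i = p^{N_i}F_i$ and dividing back by $p^{N_i}$ through $D$ is legitimate precisely because $f$ takes values in $\Z_p$, and that the substitution $x = p^{-h}(z-y)$ is $D$-definable on the relevant ball — are routine.
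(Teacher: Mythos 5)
Your proof is correct and implements exactly the strategy the paper gestures at: the text does not reproduce du Sautoy's argument but does flag that Lemma 1.9 ``uses topological compactness of $\Z_p$,'' and that is precisely your final step --- cover the compact domain by finitely many balls on each of which the function is given by a restricted power series (after translation, the substitution $x = p^{-h}(z-y)$ via $D$, and clearing denominators with $D(\cdot,p^{N_i})$), then take a finite union. The local analysis is sound: convergence of $F_i$ on all of $\Z_p^r$ forces $F_i\in\Q_p\{X\}$, the coefficients are therefore bounded, and $G_i=p^{N_i}F_i\in\Z_p\{X\}$ is a language symbol; the divisions by $p^h$ and $p^{N_i}$ are legitimate uses of $D$ because the relevant valuations are controlled. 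You are also right that the statement, as written, tacitly assumes $V$ is compact (equivalently a finite union of balls), which is what holds in every application (e.g.\ $V=\Z_p^d$ for the group operations).

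One small inaccuracy in your closing heuristic: the claim that ``there are uncountably many open subsets of $\Z_p^r$ but only countably many definable sets'' is not correct as stated, since $\mathcal{L}_D^{\mathrm{an}}$ already has uncountably many function symbols (one for each element of $\Z_p\{X\}$), so there are $2^{\aleph_0}$ formulas. The real obstruction on a non-compact open $V$ is that a single formula can involve only finitely many of these symbols, whereas an analytic function on $V$ may require infinitely many genuinely independent local patches; the conclusion that compactness of $V$ is needed is right, but the cardinality count does not establish it.
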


\begin{lemma}[Lemma 1.10 of \cite{sautoy}]
	The binary relation $\nu(x) \leq \nu(y)$ is definable in $\Z_p^\text{an}$.
\end{lemma}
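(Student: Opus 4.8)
The plan is to give an explicit $\mathcal{L}_D^{\text{an}}$-formula for the relation, so that nothing beyond the definitions of the symbols is needed. Observe first that multiplication is present as a basic function symbol: the polynomial $X_1X_2$ has a single nonzero coefficient, equal to $1\in\Z_p$, so it lies in $\Z_p\{X\}$, and hence the language contains a binary function symbol interpreted as $(a,b)\mapsto ab$. I therefore propose the atomic formula
\[ \theta(x,y)\colon\quad D(y,x)\cdot x=y, \]
and claim that for $a,b\in\Z_p$ one has $\Z_p^{\text{an}}\models\theta(a,b)$ if and only if $\nu(a)\le\nu(b)$.

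To prove the claim I would split into cases following the definition of $D$, using the elementary fact that for $a\ne0$ in $\Z_p$ one has $\nu(a)\le\nu(b)$ iff $b/a\in\Z_p$. If $a=0$, then $D(b,a)=0$, so $\theta$ reduces to $0=b$; and $\nu(0)=\infty\le\nu(b)$ likewise holds exactly when $b=0$. If $a\ne0$ and $\nu(a)\le\nu(b)$ (this includes $b=0$, where $\nu(b)=\infty$), then $D(b,a)=b/a\in\Z_p$ and $D(b,a)\cdot a=(b/a)\cdot a=b$, so $\theta(a,b)$ holds. If $a\ne0$ and $\nu(a)>\nu(b)$, then in particular $b\ne0$ and $D(b,a)=0$, so $D(b,a)\cdot a=0\ne b$ and $\theta(a,b)$ fails. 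These cases are exhaustive, which proves the claim; note that the formula $\exists z\,(x\cdot z=y)$ would serve equally well, by the same divisibility fact about $\Z_p$.

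Since the verification is just a short case analysis, I do not expect a genuine obstacle; the only point requiring care is that the single formula must handle the degenerate cases $x=0$ and $y=0$ correctly, which it does. If one instead wanted the valuation relation to be definable already in the reduct to the power predicates $P_n$ alone, the argument would be more delicate, and this is presumably where Hensel's lemma enters in the reference: for $p$ odd and $a\in\Z_p$ the polynomial $T^2-(1+pa)$ has the approximate root $1$ with unit derivative, so every element of $1+p\Z_p$ is a square, and more generally membership of a unit in $(\Z_p^{\times})^n$ is governed by finitely many congruence conditions; one then expresses $x\mid y$ as a Boolean combination of such $P_n$-conditions applied to suitable products and quotients, treating the prime $2$ by a separate refinement. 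For the statement as stated, however, the formula $D(y,x)\cdot x=y$ is all that is required.
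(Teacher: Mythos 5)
Your formula is correct and the verification is sound: $X_1X_2 \in \Z_p\{X\}$ is indeed a basic function symbol, so $D(y,x)\cdot x = y$ is an atomic $\mathcal{L}_D^\text{an}$-formula, and your three-way case analysis (on $x=0$; on $x\ne 0$ with $\nu(x)\le\nu(y)$; and on $x\ne 0$ with $\nu(x)>\nu(y)$, which forces $y\ne 0$) is exhaustive, using the elementary fact that for $a\ne 0$ in $\Z_p$ one has $\nu(a)\le\nu(b)$ iff $a$ divides $b$. The paper does not supply its own argument for this lemma --- it cites Lemma~1.10 of du Sautoy and remarks that the proof there is an application of Hensel's lemma --- so your route is genuinely different. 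The Hensel-based argument is what one needs when reading the valuation off the power predicates $P_n$ alone (for $p$ odd, $1+p\Z_p\subseteq(\Z_p^\times)^2$, and more generally membership of a unit in $(\Z_p^\times)^n$ is governed by finitely many congruence conditions), whereas you observe that once $D$ is a primitive in the language the relation is already quantifier-free definable, and that even $\exists z\,(x\cdot z=y)$ suffices. Given the language as actually set up in this paper, your approach is the cleaner one; the cited argument buys robustness, surviving in the reduct without $D$, a distinction you correctly anticipate in your closing remark.
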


Let $G$ be a uniform pro-$p$ group and put $d = \gpdim(G)$. Fix a topologically generating set $\{ x_1, \dots x_d \}$. As seen in Section 2.5, the map $\lambda \mapsto x_1^\lambda$ induces a homeomorphism $\Z_p^d \cong G$.
By Section 3.2 this is an analytic structure on $G$ which makes $G$ into a $p$-adic analytic group.
In particular, the group operations are analytic and hence definable by the previous lemma.
By Theorem~\ref{cont_analytic} the $\Z_p$-action on $G$ is also analytic and thus definable in $\Z_p^\text{an}$.

\begin{proposition} \label{good_base_def}
	Suppose $G$ is a uniform pro-$p$ group. Then the set of all good bases is definable in $\Z_P^\text{an}$.
\end{proposition}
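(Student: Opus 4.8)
The plan is to unwind Lemma~\ref{good_basis_lemma}, which gives a finite list of conditions (a)--(d) characterizing when a tuple $(h_1,\dots,h_d) \in G^d$ is a good basis for some open subgroup, and to check that each of these conditions translates into an $\mathcal{L}_D^\text{an}$-formula under the identification $G \cong \Z_p^d$. Throughout I identify an element $h_i \in G$ with its coordinate tuple $\lambda(h_i) \in \Z_p^d$, so that a good basis becomes a $d^2$-tuple of $p$-adic variables; the whole point is that every ingredient appearing in (a)--(d) is either an analytic function of these variables or the valuation relation, both of which are definable by the two cited lemmas of \cite{sautoy}.

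First I would record that the map $\omega : G \to \N \cup \{\infty\}$ is definable: by Proposition~\ref{omega_x_lambda}, $\omega(x(\mu)) = \min\{\nu(\mu_i)+1 : i = 1,\dots,d\}$, so the relations ``$\omega(x) \leq \omega(y)$'', ``$\omega(x) = n$'' for fixed $n$, and ``$\omega(x) > \omega(y)$'' are all definable from the valuation relation $\nu(x) \leq \nu(y)$, which is definable by Lemma 1.10 of \cite{sautoy}. This immediately handles condition (a) (a conjunction of finitely many instances of $\omega(h_i) \leq \omega(h_j)$) and condition (b) ($h_i \neq 1$, i.e. $\lambda(h_i) \neq 0$). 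For condition (c), I need that the set $H(\bar h) = \{h_1^{\lambda_1}\cdots h_d^{\lambda_d} : \lambda_i \in \Z_p\}$ is a subgroup; since the $\Z_p$-action $(\mu,\lambda) \mapsto x(\mu)^\lambda$ and the group multiplication on $G$ are analytic in the coordinates (by Proposition~\ref{free_module}, Theorem 8.18 of \cite{dixon}, and Theorem~\ref{cont_analytic}), hence definable, the statement ``for all $\lambda, \mu \in \Z_p^d$ there is $\rho \in \Z_p^d$ with $h(\lambda)h(\mu) = h(\rho)$, and for all $\lambda$ there is $\rho$ with $h(\lambda)^{-1} = h(\rho)$'' is a first-order $\mathcal{L}_D^\text{an}$-formula in $\bar h$. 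Finally condition (d), ``$\omega(h(\lambda)) = \min\{\omega(h_i) + \nu(\lambda_i)\}$ for all $\lambda_1,\dots,\lambda_d$'', is a universally quantified statement combining the definable $\omega$, the definable $\nu$, and the definable map $\lambda \mapsto h(\lambda)$, so it too is expressible. Conjoining the four formulas gives a single $\mathcal{L}_D^\text{an}$-formula defining the good bases.

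The main obstacle I anticipate is a bookkeeping one rather than a conceptual one: making sure that the composite functions $(\bar h, \lambda) \mapsto h(\lambda) = h_1^{\lambda_1}\cdots h_d^{\lambda_d}$ really are given by (piecewise) analytic functions of \emph{all} of the $p$-adic variables $\bar h = (\lambda(h_1),\dots,\lambda(h_d))$ and $\lambda$ jointly, not just analytic in $\lambda$ for each fixed $\bar h$. This requires noting that the exponentiation map $G \times \Z_p \to G$, $(g,\lambda) \mapsto g^\lambda$, is itself analytic as a map of $p$-adic analytic manifolds --- which follows since it is a continuous homomorphism in the second variable and one can invoke Theorem~\ref{cont_analytic} together with the manifold structure on $G \times \Z_p$ --- and then that finite composition and multiplication preserve analyticity, hence definability in $\Z_p^\text{an}$ by Lemma 1.9 of \cite{sautoy}. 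Once that is in hand, the proof is just the assembly of conditions (a)--(d) into one formula, with all quantifiers ranging over $\Z_p$ (equivalently over $G$ via the fixed homeomorphism).
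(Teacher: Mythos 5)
Your proposal follows essentially the same route as the paper: reduce to conditions (a)--(d) of Lemma~\ref{good_basis_lemma}, express $\omega$ via Proposition~\ref{omega_x_lambda} and the definability of $\nu(x)\leq\nu(y)$, and express (c) and (d) via definability of the analytic group operations and the $\Z_p$-action. The concern you raise about joint analyticity of $(g,\lambda)\mapsto g^\lambda$ is reasonable, but the paper handles it the same way you do (by citing Theorem~\ref{cont_analytic}), so it is not a divergence from the paper's argument.
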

\begin{proof}
	Set $d = \gpdim(G)$, let $(h_1, \dots h_d) \in G^p$ and write $h_i = x(\lambda_i)$. We will show that the conditions (a) to (d) in Lemma~\ref{good_basis_lemma} are definable in $\Z_p^\text{an}$.
	
	(a) We have $\omega(h_i) = \min\{ \nu(\lambda^i_1), \dots \nu(\lambda^i_d) \} +1$ by Proposition~\ref{omega_x_lambda} and therefore $\omega(h_i) \leq \omega(h_j)$ if and only if
	\[ \min\{ \nu(\lambda^i_1), \dots \nu(\lambda^i_d) \} +1 \leq \min\{ \nu(\lambda^j_1), \dots \nu(\lambda^j_d) \} +1. \]
	But this is clearly definable as $\nu(x) \leq \nu(y)$ is definable by the above lemma.
	
	(b) Note that $h_i = 0$ if and only if $\lambda^i = 0$.
	
	(c) By the results of Section 3.2 the group operation and exponentiation with elements from $\Z_p$ are analytic and hence definable in $\Z_p^\text{an}$.
	Note that $\{ h_1^{\lambda_1} \cdots h_d^{\lambda_d} : \lambda_i \in \Z_p\}$ is definable from the parameters $h_1, \dots h_d$.
	
	(d) Follows easily from definability of $\nu(x) \leq \nu(y)$ and definability of the  $\Z_p$-action.
\end{proof}

\begin{definition}
	The language $\mathcal{L}_\text{prof}$ is a two-sorted language with sorts $G$ and $I$ with the group language on $G$, a partial order $\leq$ on $I$, and a relation $K \subseteq G \times I$. If $G$ is a profinite group together with a family $\{ K_i : i \in I \}$
	of open subgroups, which is a neighborhood basis at $1 \in G$, then $\mathcal{G} = (G,I)$ becomes an $\mathcal{L}_\text{prof}$-structure by defining
	$K(G,i) = K_i$ for each $i \in I$ and $i \leq j \iff K_i \supseteq K_j$.
	We say that $\mathcal{G}$ is a \textit{full} profinite group, if the family $\{K_i : i \in I\}$ consists of all open subgroups of $G$.
\end{definition}

We have already seen that $G$ is definable in $\Z_p^\text{an}$ as a group. By Lemma~\ref{good_base_exist} every open subgroup of $G$ admits a good basis. The set of all good bases is definable by Proposition~\ref{good_base_def}. By Lemma~\ref{good_basis_lemma} the open subgroups are uniformly definable from good bases.
Moreover, the relation saying that two good bases generate the same open subgroup is definable.
Therefore we obtain the following:

\begin{theorem}[Proposition 1.2 of \cite{tent}] \label{full_profinite_interpret}
	Let $G$ be a uniform pro-$p$ group. Then the full profinite group $\mathcal{G} = (G,I)$ is interpretable in $\Z_p^\text{an}$ and hence has NIP.
\end{theorem}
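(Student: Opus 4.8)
The plan is to unwind the preceding results into an explicit interpretation of the two-sorted structure $\mathcal{G} = (G,I)$ inside $\Z_p^\text{an}$, and then quote that $T_D^\text{an}$ has NIP together with the fact that NIP is inherited by interpreted structures. First I would fix a topological generating set $\{x_1,\dots,x_d\}$ of $G$, $d = \gpdim(G)$, and use the homeomorphism $\Z_p^d \cong G$ from Proposition~\ref{free_module} to identify $G$ with $\Z_p^d$. By the discussion in Section~3.2 this identifies multiplication and inversion on $G$ with analytic maps on powers of $\Z_p^d$, and by Theorem~\ref{cont_analytic} the $\Z_p$-action $(\lambda,g)\mapsto g^\lambda$ is analytic as well; since every analytic function is definable in $\Z_p^\text{an}$, all three are definable. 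Thus the $G$-sort, together with its group operations and the $\Z_p$-action, is a definable set of $\Z_p^\text{an}$.

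For the $I$-sort I would take the set $B \subseteq G^d$ of good bases, which is definable by Proposition~\ref{good_base_def}. Given $\bar h = (h_1,\dots,h_d)\in B$, write $H_{\bar h} = \{\,h_1^{\lambda_1}\cdots h_d^{\lambda_d} : \lambda_i\in\Z_p\,\}$; by Lemma~\ref{good_basis_lemma} this is exactly the open subgroup of which $\bar h$ is a good basis, and since multiplication and the $\Z_p$-action are definable, the relation ``$x\in H_{\bar h}$'', namely $\exists \lambda_1,\dots,\lambda_d\,(x = h_1^{\lambda_1}\cdots h_d^{\lambda_d})$, is definable uniformly in $\bar h$. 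Consequently the relations $E(\bar h,\bar h')\!:\iff\!\forall x\,(x\in H_{\bar h}\leftrightarrow x\in H_{\bar h'})$ and $\bar h\preceq\bar h'\!:\iff\!\forall x\,(x\in H_{\bar h'}\to x\in H_{\bar h})$ are definable; $E$ is an equivalence relation on $B$, and $\preceq$ is $E$-invariant and induces a partial order on $B/E$. I then set $I := B/E$ with that partial order, and interpret $K\subseteq G\times I$ by $K(x,[\bar h])\iff x\in H_{\bar h}$, which is well defined on $E$-classes. By Lemma~\ref{good_base_exist} every open subgroup of $G$ has a good basis, and by Lemma~\ref{good_basis_lemma} every good basis generates an open subgroup, so the map $[\bar h]\mapsto H_{\bar h}$ is a bijection between $B/E$ and the set of \emph{all} open subgroups of $G$, and $[\bar h]\preceq[\bar h']$ iff $H_{\bar h}\supseteq H_{\bar h'}$. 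Hence the interpreted structure is exactly the full profinite group $(G,I)$, so $\mathcal{G}$ is interpretable in $\Z_p^\text{an}$, and since $T_D^\text{an}$ has NIP and NIP transfers along interpretations, $\mathcal{G}$ has NIP.

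I do not expect a genuine obstacle: every nonroutine ingredient --- definability of the group operations and the $\Z_p$-action, definability of the set of good bases (Proposition~\ref{good_base_def}), the characterization of good bases and the subgroups they generate (Lemma~\ref{good_basis_lemma}), and existence of good bases (Lemma~\ref{good_base_exist}) --- has already been established. What remains is purely bookkeeping: checking that $E$ is an equivalence relation, that $K$ and $\preceq$ descend to $B/E$, and recalling that a (possibly many-sorted, quotient) interpretation suffices to transfer NIP.
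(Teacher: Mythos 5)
Your proposal matches the paper's argument almost exactly: both identify $G$ with $\Z_p^d$ via Proposition~\ref{free_module} so that the group operations and $\Z_p$-action are definable, then use the definable set of good bases (Proposition~\ref{good_base_def}), existence of good bases for every open subgroup (Lemma~\ref{good_base_exist}), uniform definability of the subgroups generated by good bases (Lemma~\ref{good_basis_lemma}), and definability of the ``same subgroup'' equivalence relation to produce the $I$-sort as a quotient. You simply spell out the bookkeeping (the relations $E$, $\preceq$, and $K$) that the paper leaves implicit, so this is the same proof in more detail.
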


\section{Classification of full profinite NIP groups}
	We have seen that examples of full profinite NIP groups are given by compact $p$-adic analytic groups. It turns out that there are essentially no further examples. Every full profinite NIP group
	is virtually a finite direct product of compact $p$-adic analytic groups.
	
\begin{theorem}[Theorem 1.1 of \cite{tent}] \label{main_theorem}
	Let $\mathcal{G} = (G,I)$ be a full profinite group. Then the following are equivalent:
	\begin{itemize}
		\item[(a)] $G$ has an open normal subgroup $N \trianglelefteq_o G$ such that $N = P_1 \times \dots \times P_t$ is a direct product of compact $p_i$-adic analytic groups $P_i$, where the $p_i$ are different primes.
		\item[(b)] $\Th(\mathcal{G})$ is NIP.
		\item[(c)] $\Th(\mathcal{G})$ is NTP\textsubscript{2}.
	\end{itemize}
\end{theorem}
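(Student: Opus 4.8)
The plan is to prove the cycle of implications (a) $\Rightarrow$ (b) $\Rightarrow$ (c) $\Rightarrow$ (a). The implication (b) $\Rightarrow$ (c) is for free, since by \cite[Proposition~5.31]{simon} every theory with TP\textsubscript{2} has IP, so NIP implies NTP\textsubscript{2}. For (a) $\Rightarrow$ (b) I would interpret $\mathcal{G}$ in a structure already known to have NIP. Writing $N = P_1 \times \dots \times P_t$ as in (a), each $P_i$ has an open uniformly powerful pro-$p_i$ subgroup $U_i$ by Lazard's theorem, and the full profinite group $(U_i, I)$ is interpretable in $\Z_{p_i}^{\mathrm{an}}$, hence has NIP, by Theorem~\ref{full_profinite_interpret}. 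Then $(P_i, I)$ is interpretable in $(U_i, I)$ together with the finite quotient $P_i / \Core_{P_i}(U_i)$ (with its conjugation action) and finitely many parameters forming a transversal, so $(P_i, I)$ has NIP; the full profinite group on $N$ is interpretable in the disjoint union of the $(P_i, I)$; and finally $\mathcal{G}$ is interpretable in $(N, I)$ together with the finite quotient $G/N$ and finitely many parameters. Since interpretations, finite disjoint unions and the addition of finitely many constants all preserve NIP, $\Th(\mathcal{G})$ has NIP.

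The substantial direction is (c) $\Rightarrow$ (a), and here the crucial feature is that $\mathcal{G}$ is \emph{full}: the family of all cosets $aK_i$ of open subgroups is uniformly definable, by the formula $\phi(x;y,i) := K(xy^{-1},i)$, and an \emph{independent array} of such cosets --- rows indexed by open subgroups, one column per coset of the subgroup in that row, such that two cosets in a common row are disjoint while choosing one coset from each row always yields a nonempty intersection --- is precisely a TP\textsubscript{2}-configuration for $\phi$. Assuming $\Th(\mathcal{G})$ is NTP\textsubscript{2}, no arbitrarily large independent array exists, and from this I would extract two structural facts. First, $|G|$ is a $\pi$-number for a finite set of primes $\pi$: if $p$ divides $|G|$ there is an open subgroup of index divisible by $p$ (by the definition of $|G|$ and Proposition~\ref{prop:finite_index_divides}), and using the inverse system of finite quotients one can choose, for any finite set of primes dividing $|G|$, open subgroups with pairwise coprime indices; such a family is automatically independent (the canonical map from the cosets of the intersection to the product of the cosets is a bijection), so infinitely many primes in $|G|$ would give arbitrarily large independent arrays. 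Second, $G$ has finite rank: otherwise there are open $U \le_o G$ with $\gpdim(U) = \gpdim(U/\Phi(U))$ unboundedly large, and inside such a $U$ one finds, after passing to a suitable finite section, an independent family of prime-power-index subgroups with enough cosets per row to build an arbitrarily large independent array.

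With these two facts in hand I would finish by pure profinite group theory. Since $|G|$ involves only finitely many primes, the pro-Fitting subgroup is a finite product $F(G) = \prod_{p \in \pi} O_p(G)$, and since $G$ has finite rank each $O_p(G)$ is a pro-$p$ group of finite rank, hence compact $p$-adic analytic by Lazard's theorem (cf. Theorem~\ref{t:finite_rank}). By Theorem~\ref{reid} there is $A \trianglelefteq_o G$ with $F(G) \le A$ and $A/F(G)$ finitely generated abelian; since $A/F(G)$ is a $\pi$-group it decomposes as $\prod_{p\in\pi}B_p$ with $B_p$ finitely generated abelian pro-$p$. Because a continuous action of a pro-$q$ group on a compact $p$-adic analytic group with $q\neq p$ factors through a finite quotient (using that the automorphism group of a finitely generated pro-$p$ group is virtually pro-$p$, Theorem~\ref{aut_pro_p}), one passes to a further open normal subgroup on which the different primes are uncoupled, and then splits the resulting Hall subgroups via the profinite Schur--Zassenhaus theorem (Proposition~\ref{schur}); the outcome is an open normal subgroup $N \trianglelefteq_o G$ of the form $N = P_1 \times \dots \times P_t$ with each $P_i$ compact $p_i$-adic analytic, which is (a). This closes the cycle.

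I expect the main obstacle to be the finite-rank step in (c) $\Rightarrow$ (a). Under the stronger hypothesis of NIP one could simply invoke the Baldwin--Saxl lemma (Lemma~\ref{baldwin_saxl}) to bound descending chains of uniformly definable subgroups; but under NTP\textsubscript{2} one must genuinely manufacture an \emph{array} of mutually independent partitions of $G$ into cosets of open subgroups, which requires a quantitative count of how many cosets of an index-$p^s$ subgroup are available inside a high-dimensional section --- this combinatorial bookkeeping, rather than any single clean lemma, is the heart of the argument. A secondary difficulty is the last, purely group-theoretic step: getting from ``finite rank together with finitely many primes'' to the clean direct-product normal form of (a) takes some care, but is a routine application of Reid's theorem together with Schur--Zassenhaus and the structure of automorphism groups of $p$-adic analytic groups.
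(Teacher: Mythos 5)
Your overall strategy matches the paper's: for (a)$\Rightarrow$(b) reduce via Lazard to uniform pro-$p$ pieces, interpret each $(U_i,I)$ in $\Z_{p_i}^{\mathrm{an}}$ as in Theorem~\ref{full_profinite_interpret}, and build back up through finite extensions; (b)$\Rightarrow$(c) is free; and for (c)$\Rightarrow$(a) extract finiteness of the prime set and finite rank from NTP\textsubscript{2}, then finish with Reid's theorem, Schur--Zassenhaus, and the fact that $\Aut$ of a finitely generated pro-$p$ group is virtually pro-$p$. Your identification of the key formula $\phi(x;y,i)=K(xy^{-1},i)$ and the observation that an independent array of cosets is exactly a TP\textsubscript{2}-pattern are right, and the argument that coprime-index open subgroups give a pairwise-independent family (via $|G:\bigcap U_p|=\prod|G:U_p|$) is correct, so the ``finitely many primes'' step works as you describe. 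The paper packages the coset-array extraction into a reusable combinatorial lemma (Lemma~\ref{ntp2_lemma}, Lemma~4.3 of \cite{tent}) and applies it to the Sylow decompositions of $N$ and $G/N$, but the underlying idea is the one you are implementing by hand.

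The genuine gap, which you correctly flag as ``the main obstacle'', is the finite-rank step, and the missing tool is the Lucchini--Guralnick theorem (\cite{lucchini}, \cite{guralnick}): if every Sylow subgroup of a finite group $F$ is $d$-generated then $\gpdim(F)\leq d+1$. You propose that an open $U\leq_o G$ of large $\gpdim$ should yield, in a finite section, an independent family of prime-power-index subgroups with many cosets per row --- but large $\gpdim(U)$ alone does not produce this, because the generators could be distributed over many primes and over non-elementary-abelian sections where you don't obviously have an independent family. Lucchini--Guralnick is precisely what converts ``$\gpdim(U/N)\geq n$'' into ``some Sylow $p$-subgroup $P$ of $U/N$ has $\gpdim(P)\geq n-1$'', at which point $P/\Phi(P)\cong\F_p^{n-1}$ hands you $n-1$ independent index-$p$ rows, each with $p\geq 2$ cosets, giving a $2$-inconsistent TP\textsubscript{2} array of unbounded width. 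Without naming this ingredient the step does not close; with it, your outline and the paper's coincide.
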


We just sketch the proof. Its details are contained in \cite{tent}:

\medskip
\noindent
{\bf (a) implies (b)}
Suppose $G$ is a group and $N \trianglelefteq_o G$ is an open normal subgroup of $G$ such that
	\[ N = P_1 \times \dots \times P_t \]
	is a Cartesian product of compact $p_i$-adic analytic groups $P_i$, where the $p_i$ are different primes.
	We have to show that $G$ has NIP when viewed as a full profinite group.

By Lazard's Theorem, every $P_i$ has an uniformly powerful pro-$p_i$ normal subgroup of finite index. Hence we may assume that each $P_i$ is uniformly powerful and hence  interpretable in $\mathbb{Z}_{p_i}^\mathrm{an}$ as a full profinite group.
Let $M$  be the disjoint union of the structures $\mathbb{Z}_{p_i}^\mathrm{an}$ in the language that is the disjoint union of their languages.

Naming constants for the finite group $F = G/N$, it is left to show that $G$ can be described in terms of
$N$, $F$ and functions $F \times F \rightarrow N$ and $F \rightarrow \Aut(G)$,
and that the open subgroups of $G$ are uniformly definable in $(M,F)$, proving (b).

\bigskip

The following combinatorial lemma will be used repeatedly in the proof that either of (b) or (c) implies (a).

\begin{lemma}[Lemma 4.3 of \cite{tent}] \label{ntp2_lemma}
	Suppose $G$ is an $\emptyset$-definable group in a structure with NTP\textsubscript{2} theory and $\psi(x,y)$ is a formula implying $x \in G$.
	Then there is a constant $k = k_\psi$, depending only on $\psi$, such that whenever $H \leq G$ is a subgroup, $\pi : H \twoheadrightarrow \prod_{i \in J} T_i$ is an epimorphism from $H$ onto a product of groups and for each
	$j \in J$ there are $\overline{R_j} \leq G$ and $R_j < T_j$ such that $\overline{R_j} \cap H = \pi_j^{-1}(R_j)$ and finite intersections of the $\overline{R_j}$ are uniformly definable by instances of $\psi$, then $|J| \leq k$. If $G$ is a NIP group, then it suffices that the $\overline{R_j}$ are definable.
\end{lemma}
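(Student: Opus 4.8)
The plan is to argue by contradiction: if there were no bound $k_\psi$, I would extract from configurations with $|J|$ arbitrarily large a TP\textsubscript{2}-pattern for a formula built from $\psi$, contradicting that the ambient theory is NTP\textsubscript{2}. The first observation is that the $\overline{R_j}$ are subgroups and so all contain $1$; hence their finite intersections are never empty and no inconsistency can be read off them directly. The remedy is to replace $\psi$ by the formula $\psi'(x,(y,z)) := (z\in G)\wedge\psi(z^{-1}x,y)$, whose instances define left translates of the instances of $\psi$. The content of the hypothesis is exactly that for every finite $B\subseteq J$ the intersection $\bigcap_{j\in B}\overline{R_j}$ equals $\psi(G,b_B)$ for some $b_B$, so that cosets of such intersections become instances of $\psi'$.

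So suppose that for every $N$ there is a subgroup $H\leq G$, an epimorphism $\pi$, and subgroups $\overline{R_j},R_j$ as in the statement with $|J|\geq N$. Fixing $n$ and $m$, take such a configuration with $|J|\geq nm$, partition $J$ into blocks $B_1,\dots,B_n$ of size $m$, and set $K_i=\bigcap_{j\in B_i}\overline{R_j}=\psi(G,b_i)$. Since $\pi$ maps $H$ onto $\prod_j T_j$ and each $R_j<T_j$ is proper, $H/(K_i\cap H)\cong\prod_{j\in B_i}(T_j/R_j)$ is a group of size at least $2^m$; lifting a set of coset representatives through $\pi$, pick $g_{i,1},\dots,g_{i,2^m}\in H$ lying in pairwise distinct cosets of $K_i$. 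Consider the array with entries $a_{i,\ell}=(b_i,g_{i,\ell})$, so that $\psi'(x,a_{i,\ell})$ defines the coset $g_{i,\ell}K_i$. Within row $i$, distinct entries define disjoint cosets of the single subgroup $K_i$, so every row is $2$-inconsistent; and for any transversal $f$ the set $\bigcap_i g_{i,f(i)}K_i$ is non-empty, because the $n$ coset conditions it imposes concern pairwise disjoint blocks of coordinates of $\prod_j T_j$ and $\pi$ is surjective, so they can be met simultaneously by an element of $H$. Letting $n$ and $m$ grow, we obtain for every pair of finite index sets a $\psi'$-array of the required shape with the uniform inconsistency bound $2$, so by compactness $\psi'$ has TP\textsubscript{2} --- a contradiction. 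Tracing the numerology then yields an explicit $k_\psi$, depending only on $\psi$ (through $\psi'$) and the fixed ambient theory.

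For the NIP variant the intersections need not be definable, only the individual $\overline{R_j}$: choose $t_j\in T_j\setminus R_j$ for each $j$, and for each $S\subseteq J$ a preimage $h_S\in H$ under $\pi$ of the tuple that is $t_j$ on coordinates $j\in S$ and trivial elsewhere. Then $h_S\in\overline{R_j}$ iff $j\notin S$, so if the $\overline{R_j}$ are uniformly definable the $h_S$ witness an independence-property pattern of length $|J|$ for the defining formula; hence $|J|$ is bounded by its VC-dimension.

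The step I expect to be the main obstacle is arranging the pattern to be wide and tall at once: a single $\overline{R_j}$ only supplies the $\geq 2$ cosets of $R_j$ in $T_j$, far too few columns for TP\textsubscript{2}, which is why one must bundle $m$ of the $\overline{R_j}$ per row and exploit the exponentially large quotient $\prod_{j\in B_i}(T_j/R_j)$, keeping the blocks disjoint so that transversals stay consistent. Making sure that the construction uses nothing about a configuration beyond the size of $J$, and that ``uniformly $\psi$-definable finite intersections'' is precisely what lets all the relevant cosets be presented as instances of the single formula $\psi'$, is the delicate part.
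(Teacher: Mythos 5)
Your proof is correct, and it follows what is essentially the only natural route to this bound. The paper itself states this lemma with a citation to Lemma~4.3 of \cite{tent} and supplies no proof, so there is nothing to compare against in the source directly, but your argument is the expected one: pass from $\psi$ to the translated formula $\psi'(x,(y,z)) := (z\in G)\wedge\psi(z^{-1}x,y)$ so that cosets of $\psi$-definable intersections become instances of a single formula; bundle $m$ of the $\overline{R_j}$ per row to obtain from the surjectivity of $\pi$ and properness of the $R_j$ at least $2^m$ pairwise-disjoint cosets of a fixed $K_i=\bigcap_{j\in B_i}\overline{R_j}$, giving row $2$-inconsistency; and exploit that the blocks $B_i$ are disjoint coordinate sets in $\prod_j T_j$, so a transversal of cosets can be realized by an $h\in H$, giving column consistency. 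Compactness then turns the arbitrarily large finite arrays into a TP\textsubscript{2} pattern for $\psi'$, contradicting NTP\textsubscript{2}, and an explicit $k_\psi$ can be extracted from the size at which the NTP\textsubscript{2} bound for $\psi'$ with $k=2$ kicks in. The NIP refinement via the preimages $h_S$ producing a shattered set of size $|J|$ for $\psi^{\text{opp}}$ is also correct and shows why in that case only the individual $\overline{R_j}$ need be $\psi$-instances. The two small points worth noting for hygiene are: $T_j/R_j$ and $H/(K_i\cap H)$ are in general coset spaces, not groups, but only their cardinalities are used, so nothing is lost; and the lift of the tuple in the NIP case and the realization of the transversal both rely on $\pi$ being an epimorphism onto the full (unrestricted) direct product, which is what the hypothesis provides.
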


Since we are working in full groups, it is clear from this lemma that the distinction between NIP and NTP$_2$ groups disappears in this setting.

\medskip
\noindent
{\bf (b) or (c) imply (a) }

Using the fact that if all Sylow subgroups of a finite group $G$ can be generated by $d$ elements, then $\gpdim(G) \leq d+1$ (proved independently by Lucchini in \cite{lucchini} and by Guralnick in \cite{guralnick}) we first show that if
 $\mathcal{G} = (G,I)$ is a full profinite group with NTP\textsubscript{2} theory, then $G$ has finite rank.

By Theorem~\ref{reid}, the group $G$ has closed normal subgroups $N \leq A \leq G$ such that $N = F(G)$ is the pro-Fitting subgroup, $A/N$ is finitely generated abelian and $G/A$ is finite. In particular, $N$ is pronilpotent of finite rank.

We may assume $G = A$. The quotient $G/N$ is abelian, hence pronilpotent and thus the Cartesian product of its Sylow subgroup.
Similarly,  the pronilpotent group $N$ is a Cartesian product of its Sylow subgroups. So write
\[G/N = \prod_{l \in L} Q_l\mbox{\ \ and \ }N = \prod_{j \in J}P_j \] where  the $Q_l$ are $r_l$-Sylow subgroups for different primes $r_l$ and the $P_j$ are $p_j$-Sylow subgroups for different primes  $p_j$.
Using Lemma~\ref{ntp2_lemma} one shows that the sets $L$  and $J$ are finite.

Let $\pi_N : G \twoheadrightarrow G/N$ be the natural projection. Write $\overline{U} = \pi_N^{-1}(U)$ where $U$ is the direct product of finitely many Sylow subgroups of $G/N$ with respect to primes different from the primes appearing in $N$. As $|N|$ and $|U|$ are coprime, it follows that $N$ is a Hall subgroup in $\overline{U}$ and we can use the Schur-Zassenhaus theorem to find a complement $V$ of $N$ in $\overline{U}$, i.e. $\overline{U} = N \rtimes V$. By Theorem~\ref{aut_pro_p}, $V$ induces a finite group of automorphisms on $N$
and we eventually conclude that $G = N = P_1 \times \dots \times P_t$. As $G$ has finite rank, so has each $P_i$. Hence each $P_i$ is a compact $p_i$-adic analytic group.

\section{NIP groups and polynomial subgroup growth}
We show here that every family of uniformly definable subgroups of a NIP group satisfies a polynomial growth condition.
By Lazard's theorem and Lubotzky and Mann's results in \cite{psg}, a pro-$p$ group is compact $p$-adic analytic if and only if it has polynomial subgroup growth, i.e.
there is some polynomial $f(X)$ such that for all $n$ there are at most $f(n)$ many open subgroups of index at most $n$.
This then yields a more immediate proof that a  full pro-$p$ group with NIP theory is $p$-adic analytic.

\begin{theorem}\label{t:NIPpolybd}
Let $T$ be a theory, $M \models T$ a model of $T$, and let $G$ be an $\emptyset$-definable group. Assume further that $\phi(x,y)$ is a formula which has NIP and implies $x \in G$.

Let $\sigma_\phi(n)$ be the number of subgroups $H \leq G$ such that $H$ has index at most $n$ and can be defined by an instance of $\phi$.
Then there is a constant $c$ such that
	\[ \sigma_\phi(n) \leq cn^{\VC(\phi) \VC(\phi^\text{opp})} \]
	for all $n$.
\end{theorem}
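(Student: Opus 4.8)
The plan is to combine the Baldwin--Saxl lemma (Lemma~\ref{baldwin_saxl}) with the Sauer--Shelah lemma (Lemma~\ref{sauer_shelah}). Fix $n$ and let $\mathcal{H}$ be the set of all subgroups $H \leq G$ of index at most $n$ that can be defined by an instance of $\phi$; set $k = \VC(\phi^{\mathrm{opp}})$ and let $\mathcal{S}_\phi = \{\phi(M,b) : b \in M\}$.

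The first step is to show that $H_0 := \bigcap_{H \in \mathcal{H}} H$ has index at most $n^k$ in $G$. By Lemma~\ref{baldwin_saxl}, every intersection of finitely many members of $\mathcal{H}$ equals an intersection of at most $k$ of them, and so has index at most $n^k$. If $[G:H_0] > n^k$, choose elements $g_0, \dots, g_{n^k}$ lying in pairwise distinct cosets of $H_0$; for each pair $i \neq j$ pick $H_{ij} \in \mathcal{H}$ with $g_i^{-1}g_j \notin H_{ij}$, and let $\mathcal{F}$ be the finite family of all these $H_{ij}$. Then $g_0, \dots, g_{n^k}$ still lie in distinct cosets of $\bigcap \mathcal{F}$, so $[G : \bigcap\mathcal{F}] > n^k$, contradicting the bound just obtained. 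In particular, it follows a posteriori that $\mathcal{H}$ is finite, so no saturation hypothesis on $M$ is needed.

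Next, fix a transversal $A = \{g_1, \dots, g_m\}$ for the cosets of $H_0$ in $G$, where $m = [G:H_0] \leq n^k$. Every $H \in \mathcal{H}$ contains $H_0$ and is therefore a union of cosets of $H_0$; since $1 \in H_0 \subseteq H$ we have $g_i \in H$ if and only if $g_i H_0 \subseteq H$, so $H$ is completely recovered from its trace $H \cap A$. Thus $H \mapsto H \cap A$ is injective on $\mathcal{H}$. As each $H \in \mathcal{H}$ is an instance $\phi(M,b)$, the trace $H \cap A$ lies among the traces on $A$ of the sets in $\mathcal{S}_\phi$, whence
\[ \sigma_\phi(n) = |\mathcal{H}| \leq \pi_{\mathcal{S}_\phi}(|A|) \leq \pi_{\mathcal{S}_\phi}(n^k). \]
By Lemma~\ref{sauer_shelah} there is a constant $C$, depending only on $\VC(\phi) = \VC(\mathcal{S}_\phi)$, with $\pi_{\mathcal{S}_\phi}(N) \leq C N^{\VC(\phi)}$ for all $N$; applying this with $N = n^k$ gives $\sigma_\phi(n) \leq C n^{k\VC(\phi)} = C n^{\VC(\phi)\VC(\phi^{\mathrm{opp}})}$, so $c = C$ works.

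The step that requires care is the index bound on $H_0$: in general an intersection of subgroups of bounded index can have unbounded, even infinite, index, so it is essential to use that Baldwin--Saxl bounds \emph{every} finite intersection of members of $\mathcal{H}$ uniformly, not merely each member individually; the pigeonhole argument above is precisely what transfers this to the full intersection. Once $H_0$ has small index, reducing a subgroup above $H_0$ to its trace on a transversal and invoking Sauer--Shelah is routine.
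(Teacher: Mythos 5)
Your argument is correct and follows essentially the same route as the paper: use Baldwin--Saxl to bound the index of the common intersection $H_0$, note each $H\in\mathcal H$ contains $H_0$ and is determined by its trace on a transversal, and apply Sauer--Shelah. The only difference is that you spell out the pigeonhole step showing $[G:H_0]\leq n^k$ (and hence $|\mathcal H|<\infty$), which the paper states more tersely as an immediate consequence of Baldwin--Saxl.
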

\begin{proof}
By the Baldwin-Saxl lemma (\ref{baldwin_saxl}), the intersection of any finite number of such subgroups has index at most $n^{\VC(\phi^\text{opp})}$ and hence $\sigma_\phi(n)$ is finite for all $n$
and therefore does not depend on our choice of $M$.
	
As $\phi(x,y)$ has NIP, the family $\{ \phi(M,b) : b \in M \}$ has finite VC dimension. 
By the Sauer-Shelah lemma (\ref{sauer_shelah}), there is a constant $c$ such that the shatter function for this family satisfies
\[ \pi_\phi(n) = \max \{ |\{X\cap \phi(M,b) : b \in M \}| : X \subseteq M, |X| \leq n \} \leq c n ^{\VC(\phi)} \]
 for all~$n$.
Note that this function does not depend on the model $M$. 
Combining this with the  bounds from the next lemma yields the desired result.
\end{proof}

\begin{lemma}
	We have $\sigma_\phi(n) \leq \pi_\phi(n^{\VC(\phi^\text{opp})})$ for all $n$.
\end{lemma}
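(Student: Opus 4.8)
The plan is to produce, for each $n$, one finite set $X \subseteq G$ whose size is at most $n^{\VC(\phi^\text{opp})}$ and on which distinct $\phi$-definable subgroups of index $\le n$ already leave distinct traces; counting traces on $X$ is then bounded by the shatter function $\pi_\phi$ at $|X|$, which gives the claim. The engine for producing such an $X$ is the Baldwin--Saxl lemma, and the group structure (not just definability) is what lets a count of subgroups be converted into a count of traces.

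Concretely, I would first let $(H_i : i \in I)$ enumerate \emph{all} subgroups of $G$ of index at most $n$ that are definable by an instance of $\phi$, making no finiteness assumption yet. Applying Lemma~\ref{baldwin_saxl} with $k = \VC(\phi^\text{opp})$ to this family, the intersection $D = \bigcap_{i \in I} H_i$ equals an intersection of at most $k$ of the $H_i$, so $[G : D] \le n^k = n^{\VC(\phi^\text{opp})}$. Next I would fix a transversal $X \subseteq G$ for the right cosets of $D$ in $G$, so $|X| = [G:D] \le n^{\VC(\phi^\text{opp})}$. Since every $H_i$ contains $D$, it is a union of cosets of $D$, hence is determined by which cosets it contains, equivalently by the set of transversal representatives it contains; that is, $i \mapsto H_i \cap X$ is injective. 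Writing $H_i = \phi(M,b_i)$, we have $H_i \cap X = X \cap \phi(M,b_i)$, and therefore
\[ \sigma_\phi(n) = |\{H_i \cap X : i \in I\}| \le |\{X \cap \phi(M,b) : b \in M\}| \le \pi_\phi(|X|) \le \pi_\phi\big(n^{\VC(\phi^\text{opp})}\big), \]
using monotonicity of the shatter function together with $|X| \le n^{\VC(\phi^\text{opp})}$. (Incidentally this argument also re-establishes finiteness of $\sigma_\phi(n)$, since the right-hand side is finite.)

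There is no real obstacle here: the one point worth stating carefully is the elementary observation that a subgroup containing $D$ is pinned down by its trace on a transversal of $D$, which is precisely where the argument uses that we are counting subgroups rather than arbitrary definable sets. Everything else — the index bound $[G:D] \le n^{\VC(\phi^\text{opp})}$, the passage $H_i \cap X = X \cap \phi(M,b_i)$, and the final chain of inequalities — is immediate from Lemma~\ref{baldwin_saxl} and the definition of $\pi_\phi$.
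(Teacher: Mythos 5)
Your proof is correct and follows essentially the same route as the paper: intersect all index-$\le n$ subgroups defined by instances of $\phi$, use Baldwin--Saxl to bound the index of that intersection by $n^{\VC(\phi^{\text{opp}})}$, take a transversal $X$, and observe that each such subgroup is determined by its trace on $X$, so the count is bounded by the shatter function evaluated at $|X|$. The only expository difference is that you spell out why $i \mapsto H_i \cap X$ is injective (each $H_i$ contains the common intersection and is a union of its cosets), which the paper states without comment.
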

\begin{proof}
	Let $n$ be fixed and let $(H_i : i \in I)$ be the family of subgroups $H \leq G$ such that $H$ has index at most $n$ and is definable by some instance of $\phi$.
	Let $C = \bigcap_{i \in I} H_i$. Then $C$ has index at most $n^{\VC(\phi^\text{opp})}$ by the Baldwin-Saxl lemma. Let $X$ be a transversal of $G/C$ and note that
	\[  H_i = H_j \iff X \cap H_i = X \cap H_j. \]
	Therefore we have
	\[ \sigma_\phi(n) \leq |\{ X \cap \phi(M,b) : b \in M \}| \leq \pi_\phi(n^{\VC(\phi^\text{opp})}). \qedhere \]
\end{proof}

Note that we have only assumed that $\phi(x,y)$ has NIP. If $G$ is a pro-$p$ group, then the above corollary, together with Theorem~\ref{main_theorem}, tells us that the full profinite group $(G,I)$ has NIP if and only if the 
formula, which defines the family of open subgroups, has NIP.
We obtain the following combinatorial characterization of $p$-adic analytic pro-$p$ groups:

\begin{corollary}
	Let $G$ be a pro-$p$ group. Then $G$ is compact $p$-adic analytic if and only if the family of all open subgroups has finite VC dimension. 
\end{corollary}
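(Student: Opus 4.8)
The plan is to translate the combinatorial hypothesis into a statement about a single formula in the full profinite structure and then feed the two implications into Theorem~\ref{main_theorem} and Theorem~\ref{t:NIPpolybd} respectively. Write $\mathcal{G} = (G,I)$ for the full profinite group attached to $G$ and let $\phi(x,i)$ be the $\mathcal{L}_\text{prof}$-formula $K(x,i)$. By construction the family of all open subgroups of $G$ is exactly $\mathcal{S}_\phi = \{\phi(\mathcal{G},i) : i \in I\}$, so the hypothesis ``the family of all open subgroups has finite VC dimension'' is literally ``$\VC(\phi) < \infty$ computed in $\mathcal{G}$''; since $\VC(\phi)$ does not depend on the model, this is equivalent to $\phi$ having NIP. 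Thus it suffices to prove that $G$ is compact $p$-adic analytic if and only if $\phi$ has NIP.

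For the forward direction I would simply observe that if $G$ is compact $p$-adic analytic then clause (a) of Theorem~\ref{main_theorem} holds with the open normal subgroup taken to be $G$ itself and $t = 1$. Theorem~\ref{main_theorem} then yields that $\Th(\mathcal{G})$ is NIP, and in particular the formula $\phi$ has NIP, so $\mathcal{S}_\phi$ has finite VC dimension.

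For the converse, assume $\phi$ has NIP. Then $\phi^\text{opp}$ also has NIP, so $\VC(\phi^\text{opp})$ is finite, and Theorem~\ref{t:NIPpolybd} applies to $\phi$: there is a constant $c$ with $\sigma_\phi(n) \le c\, n^{\VC(\phi)\VC(\phi^\text{opp})}$ for all $n$. Because $\mathcal{G}$ is \emph{full}, the instances of $\phi$ are precisely the open subgroups of $G$, so $\sigma_\phi(n)$ counts exactly the open subgroups of index at most $n$; hence $G$ has polynomial subgroup growth. By the characterization of Lubotzky and Mann, combined with Lazard's theorem, recalled at the beginning of this section, a pro-$p$ group with polynomial subgroup growth is compact $p$-adic analytic, which finishes the argument.

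I do not expect a genuine obstacle here: both implications are short once the dictionary $\mathcal{S}_\phi = \{\text{open subgroups of } G\}$ and $\sigma_\phi = \text{subgroup-growth function of } G$ is in place, and that dictionary is immediate from fullness. The only steps needing a line of justification are the model-independence of $\VC(\phi)$ (already noted above) and the bookkeeping identifying $\sigma_\phi$ with the subgroup-growth function; the substantive input, namely Theorem~\ref{main_theorem} for one direction and the Lubotzky--Mann result for the other, is cited rather than reproved.
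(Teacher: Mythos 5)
Your proof is correct and follows essentially the same route the paper intends: the forward direction via Theorem~\ref{main_theorem} (a)$\Rightarrow$(b), and the converse via Theorem~\ref{t:NIPpolybd} to get polynomial subgroup growth and then the Lubotzky--Mann/Lazard characterization. The dictionary identifying the family of open subgroups with $\mathcal{S}_\phi$ for $\phi = K$, and $\sigma_\phi$ with the subgroup-growth function, is indeed immediate from fullness, so no gap.
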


\section{Elementary extensions of profinite groups as two-sorted structures}

In this last section we show that elementary extensions of profinite groups work well in this model theoretic setting, in fact, even in the sense that the original group can be recovered. 

\begin{lemma} \label{elem_ext}
	Let $G$ be a profinite group and let $(N_i: i \in I)$ be a neighborhood basis of the identity consisting of open normal subgroups $N_i \trianglelefteq_o G$.
	Let $\mathcal{G} = (G,I)$ be the corresponding $\mathcal{L}_\text{prof}$-structure and let $\mathcal{G}^* = (G^*, I^*)$ be an elementary extension of $\mathcal{G}$. Then the natural homomorphism 
	\[G \rightarrow G^* / \bigcap_{i \in I} N_i^*\]
	 is an isomorphism.
\end{lemma}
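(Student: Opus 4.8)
The plan is to exhibit the stated homomorphism explicitly, then verify injectivity using that $G$ is Hausdorff and surjectivity using topological compactness of $G$ together with elementarity.

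First I would assemble the transfer facts. Since $\mathcal{G}\preceq\mathcal{G}^*$, for each $i\in I$ the set $N_i^*=K(G^*,i)$ is a subgroup of $G^*$ with $G\cap N_i^*=N_i$ (the trace on $G$ of a set definable over $\mathcal{G}$), and $N_i^*$ is normal in $G^*$ because ``$K(\cdot,i)$ is a normal subgroup'' is a sentence with parameter $i$ true in $\mathcal{G}$. Hence $M:=\bigcap_{i\in I}N_i^*$ is normal in $G^*$, the quotient $G^*/M$ is a group, and the map in question is the restriction to $G\leq G^*$ of the quotient map $G^*\to G^*/M$, so it is a homomorphism. Its kernel is $G\cap M=\bigcap_{i\in I}(G\cap N_i^*)=\bigcap_{i\in I}N_i=\{1\}$, the last equality because $(N_i:i\in I)$ is a neighbourhood basis of the identity in the Hausdorff group $G$; this gives injectivity.

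For surjectivity I would fix $g^*\in G^*$ and aim to produce a single $g\in G$ with $g^{-1}g^*\in N_i^*$ for all $i\in I$. For a fixed $i$ the subgroup $N_i$ is open in the compact group $G$, hence of finite index; picking coset representatives $g_1,\dots,g_{n_i}\in G$, the sentence $\forall x\,\bigvee_{j\leq n_i}K(g_j^{-1}x,i)$ holds in $\mathcal{G}$ and therefore in $\mathcal{G}^*$, so $g^*$ lies in some $g_jN_i^*$. Thus $C_i:=\{g\in G:g^{-1}g^*\in N_i^*\}$ is nonempty, and a routine coset computation shows $C_i$ is a coset of $N_i=G\cap N_i^*$ in $G$, hence clopen. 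Moreover $\{C_i:i\in I\}$ is downward directed: given $i_1,\dots,i_m$, choose $k\in I$ with $N_k\subseteq\bigcap_lN_{i_l}$; transferring the inclusions $K(\cdot,k)\subseteq K(\cdot,i_l)$ to $\mathcal{G}^*$ gives $N_k^*\subseteq N_{i_l}^*$, whence $\emptyset\neq C_k\subseteq\bigcap_lC_{i_l}$. By compactness of $G$ the family has nonempty total intersection; any $g\in\bigcap_{i\in I}C_i$ satisfies $g^{-1}g^*\in M$, i.e.\ $gM=g^*M$, so the map is onto.

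The hard part is this last step. For a single index the required approximation of $g^*$ by an element of $G$ is immediate from finiteness of $|G:N_i|$ transferred by elementarity, but obtaining one element of $G$ that works simultaneously for all standard $i\in I$ forces a compactness argument, and making that argument go through is exactly where the hypotheses ``each $N_i$ open'' (so the sets $C_i$ are closed in $G$) and ``$(N_i)$ a neighbourhood basis'' (so the $C_i$ form a directed family) are used.
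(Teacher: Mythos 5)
Your proof is correct and takes essentially the same approach as the paper: the paper phrases the argument via the inverse limit $\varprojlim_{i\in I}G^*/N_i^*$, showing each factor map $G\to G^*/N_i^*$ is surjective by $|G:N_i|=|G^*:N_i^*|$ and then invoking compactness to lift to the limit, while you unroll exactly that compactness argument directly in $G$ via the directed family of nonempty clopen cosets $C_i$. The one place where you are more explicit than the paper is the surjectivity of $G\to\varprojlim G^*/N_i^*$, which the paper asserts without spelling out the finite-intersection-property argument that you supply.
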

\begin{proof}
	Note that $|G:N_i| = |G^*:N_i^*|$ holds for all $i \in I$. The natural maps $G \rightarrow G^*/N_i^*$ are surjective for all $i \in I$.
	Therefore 
	\[f : G \rightarrow \varprojlim_{i \in I} G^*/N_i^*\]
	is surjective. On the other hand we have $\ker f = \bigcap_{i \in I} N_i = 1$ by elementarity, and hence $f$ is an isomorphism. Therefore the natural homomorphism
	\[ g : G^* \rightarrow \varprojlim_{i \in I} G^*/N_i^* \]
	is surjective. We have
	\[ G \cong \varprojlim_{i \in I}G^*/N_i^* \cong G^*/\bigcap_{i \in I}N_i^*. \qedhere \]
\end{proof}

Let $T$ be an NIP theory and let $G$ be a $\emptyset$-definable group. Then $G^0$ denotes the intersection of all $\emptyset$-definable subgroups of finite index.
By Baldwin-Saxl, the quotient $G/G^0$ is independent from the choice of a (sufficiently saturated) model (see e.g. \cite[Section 8.1.2]{simon}). If $\mathcal{G} = (G,I)$ is a full profinite NIP group then the finite index subgroups are open (by Theorem~\ref{anderson} and Theorem~\ref{main_theorem}) and hence uniformly definable. The normal subgroups of finite index are also uniformly definable and
every subgroup of finite index contains a normal subgroup of finite index. Therefore we can apply the previous lemma to obtain the following (see \cite[Remark 5.5]{tent}).

\begin{corollary}
	Suppose $\mathcal{G} = (G,I)$ is a full profinite NIP group. Then the invariant quotient $G/G^0$ is isomorphic to $G$.
\end{corollary}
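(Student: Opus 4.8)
The plan is to compute the model-theoretic connected component as the intersection of the interpretations of the open normal subgroups of $G$ and then to apply Lemma~\ref{elem_ext}. Fix any elementary extension $\mathcal{G}^* = (G^*,I^*) \succeq \mathcal{G}$. Since $\Th(\mathcal{G})$ is NIP, the connected component $(G^*)^0$ does not depend on the ambient (sufficiently saturated) model and does not depend on the parameter set over which one takes definable finite-index subgroups (Baldwin--Saxl); this is precisely the invariant quotient $G/G^0$ referred to in the statement, so it suffices to prove $G^*/(G^*)^0 \cong G$. Let $(N_i : i \in I_0)$ enumerate the open normal subgroups of $G$; this is a neighbourhood basis of the identity, so Lemma~\ref{elem_ext} gives $G \cong G^*/\bigcap_{i \in I_0} N_i^*$, and the whole corollary reduces to the identity
\[ (G^*)^0 = \bigcap_{i \in I_0} N_i^*. \]

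The inclusion $\subseteq$ is immediate: each $N_i^* = K(G^*,i)$ is definable (over the parameter $i$) and has finite index $[G^*:N_i^*] = [G:N_i]$, hence contains $(G^*)^0$. For $\supseteq$ one must show that every definable finite-index subgroup $H \leq G^*$ contains $\bigcap_{i \in I_0} N_i^*$, and this is where fullness of $\mathcal{G}$ is used. Write $H = \phi(G^*,b)$ for an instance of a formula $\phi$, with $[G^*:H]$ finite, say at most some standard $m$. Two sentences hold in $\mathcal{G}$ and therefore transfer to $\mathcal{G}^*$: first, \emph{every} $\phi$-definable subgroup of index at most $m$ equals $K(\,\cdot\,,i)$ for some $i$ (because in the full profinite group $\mathcal{G}$ every finite-index subgroup is open, by Theorem~\ref{anderson} and Theorem~\ref{main_theorem}, hence of the form $K(\,\cdot\,,i)$); and second, since $G$ has finite rank by Theorem~\ref{main_theorem}, Proposition~\ref{prop:fin_gen_fin_subgps} shows that $G$ has only finitely many open subgroups $K(\,\cdot\,,j_1),\dots,K(\,\cdot\,,j_r)$ of index at most $m$, so (naming $j_1,\dots,j_r$ as parameters) every $K(\,\cdot\,,i)$ of index at most $m$ is one of these finitely many. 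Combining these two facts in $\mathcal{G}^*$ yields $H = K(G^*,j_a)$ for some \emph{standard} $j_a \in I$. Then $K(G,j_a)$ is an open subgroup of $G$, so since $(N_i)_{i \in I_0}$ is a basis at $1$ there is $i_1 \in I_0$ with $N_{i_1} \subseteq K(G,j_a)$, whence $\bigcap_{i \in I_0} N_i^* \subseteq N_{i_1}^* \subseteq K(G^*,j_a) = H$.

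Putting the two inclusions together gives $(G^*)^0 = \bigcap_{i \in I_0} N_i^*$, and hence $G^*/(G^*)^0 \cong G$ by Lemma~\ref{elem_ext}, as required. The step I expect to be the crux is the bookkeeping between standard and nonstandard elements of $I^*$ in the second paragraph: one has to make sure that a subgroup of $G^*$ which is definable and of (externally) finite index is actually the interpretation of an \emph{open} subgroup of the original group $G$, and this rests on exactly three inputs --- fullness, the strong-completeness statement that finite-index subgroups of $G$ are open, and the finiteness of the family of open subgroups of any bounded index, i.e. finite rank. Once the definable finite-index subgroups of $G^*$ are pinned down as the interpretations of the open subgroups of $G$, the rest is formal.
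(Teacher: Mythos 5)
Your proof is correct and follows the same strategy the paper sketches in the paragraph preceding the corollary: combine fullness with the fact that finite-index subgroups are open (Theorems~\ref{anderson} and~\ref{main_theorem}, plus Proposition~\ref{prop:fin_gen_fin_subgps} for the finiteness of bounded-index subgroups), and then apply Lemma~\ref{elem_ext}. Your transfer argument pinning down every definable finite-index subgroup of $G^*$ as $K(G^*,j)$ for a \emph{standard} $j$ is exactly the detail the paper leaves implicit in the phrase ``hence uniformly definable.''
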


In \cite{mariano}, Mariano and Miraglia showed that profinite $\mathcal{L}$-structures are retracts of ultraproducts of finite $\mathcal{L}$-structures. We can use Lemma~\ref{elem_ext} to give a short proof in the case of profinite groups.

Let $G$ be an infinite profinite group and let $(N_i: i \in I)$ be a neighborhood basis of the identity consisting of open normal subgroups $N_i \trianglelefteq_o G$. For $t \in I$ set $B_t = \{ j \in I : t \leq j \}$. Then $\mathcal{F}_\geq = \{ X \subseteq I : \exists t \in I \, B_t \subseteq X \}$ is a filter on $I$ and contains every cofinite subset of $I$.

\begin{lemma}
	Let $G$ and $(N_i:i \in I )$ be as above and let $\mathcal{G} = (G,I)$ be the corresponding $\mathcal{L}_\text{prof}$-structure. Let $\mathcal{U} \supseteq \mathcal{F}_\geq$ be an ultrafilter
	and let $\mathcal{G}^* = (G^*,I^*)$ be the ultrapower of $\mathcal{G}$ with respect tu $\mathcal{U}$.
	Let $\alpha$ be the equivalence class $\alpha = [(i:i \in I)] \in I^*$.
	Then $N_\alpha^* \subseteq \bigcap_{i \in I} N_i^*$ and the composition
	\[ G \rightarrow G^* \rightarrow G^*/N_\alpha^* \rightarrow G^* / \bigcap_{i \in I} N_i^* \]
	is an isomorphism.
\end{lemma}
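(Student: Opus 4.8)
The plan is to verify the inclusion $N_\alpha^*\subseteq\bigcap_{i\in I}N_i^*$ first, and then read off the isomorphism statement from Lemma~\ref{elem_ext}. As in that lemma, for $\beta\in I^*$ I write $N_\beta^*=K(G^*,\beta)$; for $i\in I$ this means $K(G^*,i)$ with $i$ viewed in $I^*$ via the elementary embedding $\mathcal{G}\hookrightarrow\mathcal{G}^*$. Since ``for every $b$, $K(-,b)$ is a normal subgroup of $G$'' is a sentence true in $\mathcal{G}$, it holds in the ultrapower $\mathcal{G}^*$ by \L{}o\'{s}'s theorem, so every $N_\beta^*$ is a normal subgroup of $G^*$ and the quotients in the statement make sense. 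The key first-order fact is that $\mathcal{G}$ satisfies
\[ \forall a\,\forall b\,\bigl(a\leq b\leftrightarrow\forall x\,(K(x,b)\rightarrow K(x,a))\bigr), \]
which holds because $i\leq j$ is by definition $N_i\supseteq N_j$; hence $\mathcal{G}^*$ satisfies it too, i.e.\ for $\beta,\gamma\in I^*$ one has $\beta\leq\gamma$ iff $N_\gamma^*\subseteq N_\beta^*$.

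Next I would fix $i\in I$, with image $\hat{\imath}\in I^*$, and note that by \L{}o\'{s}'s theorem $\hat{\imath}\leq\alpha$ holds iff $\{\,j\in I:i\leq j\,\}=B_i$ belongs to $\mathcal{U}$; since $B_i\in\mathcal{F}_\geq\subseteq\mathcal{U}$, we get $\hat{\imath}\leq\alpha$, and therefore $N_\alpha^*\subseteq N_i^*$ by the displayed equivalence. As $i\in I$ was arbitrary, $N_\alpha^*\subseteq\bigcap_{i\in I}N_i^*$, so $\bigcap_{i\in I}N_i^*/N_\alpha^*$ is a normal subgroup of $G^*/N_\alpha^*$; hence the three maps in the composition are all defined, and by the third isomorphism theorem the composite of the last two is just the quotient map $G^*\twoheadrightarrow G^*/\bigcap_{i\in I}N_i^*$.

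It follows that the full composition coincides with the natural homomorphism $G\to G^*\to G^*/\bigcap_{i\in I}N_i^*$ appearing in Lemma~\ref{elem_ext}; since the ultrapower $\mathcal{G}^*$ is an elementary extension of $\mathcal{G}$, that lemma applies verbatim and says this map is an isomorphism, as required. I do not expect a genuine obstacle: the argument is just bookkeeping around \L{}o\'{s}'s theorem. The two points that warrant care are the identification of $N_\beta^*$ with $K(G^*,\beta)$ and the use of the hypothesis $\mathcal{F}_\geq\subseteq\mathcal{U}$ — it is exactly this that forces $\alpha$ to be cofinal enough for $N_\alpha^*$ to sit below every $N_i^*$.
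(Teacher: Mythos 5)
Your proposal is correct and follows essentially the same route as the paper: both establish $N_\alpha^*\subseteq\bigcap_{i\in I}N_i^*$ using that $B_t\in\mathcal{F}_\geq\subseteq\mathcal{U}$, and then conclude by invoking Lemma~\ref{elem_ext}. The only cosmetic difference is that the paper chases elements $[(g_i)]$ of $N_\alpha^*$ directly through the ultraproduct, whereas you first deduce $\hat{\imath}\leq\alpha$ in $I^*$ via \L{}o\'{s} and then transfer the first-order equivalence between $\leq$ and reverse inclusion of the $K$-sets; both carry the same content.
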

\begin{proof}
	Let $[(g_i)] \in N_\alpha$, i.e. $\{i \in I : g_i \in N_i \} \in \mathcal{U}$. For $t \in I$ we have
	$ \{ i \in I : g_i \in N_i \} \cap B_t \in \mathcal{U}$. As $i \ge t \iff N_i \subseteq N_t$, it follows $\{ i \in I : g_i \in N_t \} \in \mathcal{U}$ and thus
	$[(g_i)] \in N_t^*$. Hence  $N_\alpha^* \subseteq \bigcap_{i \in I} N_i^*$ and by Lemma~\ref{elem_ext} the natural map $G \rightarrow G^* / \bigcap_{i \in I} N_i^*$ is an isomorphism.
\end{proof}

Note that the natural map $G^* \rightarrow \prod (G/N_i) / \mathcal{U}, [(g_i)] \mapsto [(g_iN_i)]$ induces an isomorphism $G^*/N_\alpha^* \cong \prod (G/N_i)/\mathcal{U}$.
In particular, $\prod (G/N_i) / \mathcal{U}$ is the semidirect product
\[ \prod (G/N_i) / \mathcal{U} = (\bigcap_{i \in I} N_i^*) / N_\alpha^* \rtimes G. \]

The following proposition is the first result in \cite[Remark 5.5]{tent}. It uses \cite[Lemma 3.2]{pseudofinite} and an observation on externally definable sets in \cite[Remark 5.5]{tent}.

\begin{proposition} \label{prof_pseudo_NIP}
	Let $G$ be a profinite group and let $(N_i : i \in I)$ be a neighborhood basis of the identity consisting of open normal subgroups $N_i \trianglelefteq_o G$.
	Let $\mathcal{G} = (G,I)$ be the corresponding $\mathcal{L}_\text{prof}$-structure.
	For each $t \in I$ the group $G/N_t$ naturally becomes an $\mathcal{L}_\text{prof}$-structure
	$\mathcal{G}_t = (G/N_t,I_{\leq t})$. Let $\mathcal{U} \supset \mathcal{F}_\geq$ be an ultrafilter.
	Then $\mathcal{G}$ has NIP if and only if $\prod_{t \in I}\mathcal{G}_t / \mathcal{U}$ has NIP.
\end{proposition}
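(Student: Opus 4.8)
The plan is to read off both implications from the description of $\prod_{t\in I}\mathcal{G}_t/\mathcal{U}$ set up just above. Write $\mathcal{G}^*=(G^*,I^*)$ for the ultrapower of $\mathcal{G}$ with respect to $\mathcal{U}$ and $\alpha=[(i:i\in I)]\in I^*$. Since $N_\alpha^*\subseteq N_j^*$ whenever $j\leq\alpha$, the relation $K$ descends to the quotient of the $G$-sort by $N_\alpha^*$, and a routine application of {\L}o\'s's theorem shows that $\mathcal{N}:=\prod_{t\in I}\mathcal{G}_t/\mathcal{U}$ is, up to natural isomorphism, the structure with $G$-sort $G^*/N_\alpha^*$, with $I$-sort $I^*_{\leq\alpha}=\{j\in I^*:j\leq\alpha\}$, and with the induced order and relation $K$. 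In particular $\mathcal{N}$ is interpretable in $\mathcal{G}^*$ over the parameter $\alpha$. Conversely, Lemma~\ref{elem_ext} and the lemmas following it show that $\mathcal{G}$ is recovered from $\mathcal{N}$: the map $e$ sending $g\in G$ to $[(gN_t)_t]$ and $i\in I$ to the class of a sequence equal to $i$ on $\{t:i\leq t\}\in\mathcal{F}_\geq\subseteq\mathcal{U}$ is an $\mathcal{L}_\text{prof}$-embedding $\mathcal{G}\hookrightarrow\mathcal{N}$, and on the $G$-sort its image is a complement of the type-definable normal subgroup $\Xi:=\bigl(\bigcap_{i\in I}N_i^*\bigr)/N_\alpha^*$, with $(G^*/N_\alpha^*)/\Xi\cong G$.

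Assume first that $\mathcal{G}$ has NIP. Then $\mathcal{G}^*\succ\mathcal{G}$ has NIP, and since NIP is inherited by structures interpretable in an NIP structure, $\mathcal{N}=\prod_{t\in I}\mathcal{G}_t/\mathcal{U}$ has NIP.

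Assume now that $\mathcal{N}$ has NIP; we must show that $\mathcal{G}$ has NIP. The difficulty is that $\Xi$ is only \emph{type}-definable in $\mathcal{N}$ (it is the $|I|$-indexed intersection of the definable subgroups $K(\mathcal{N},e(i))$, $i\in I$), so recovering $\mathcal{G}$ involves a type-definable quotient on the $G$-sort together with the distinguished subset $e(I)$ of the $I$-sort. This is where \cite[Lemma 3.2]{pseudofinite} and the observation on externally definable sets of \cite[Remark 5.5]{tent} come in: for every $\mathcal{L}_\text{prof}$-formula $\phi(\bar x)$ the set $\phi(\mathcal{G})$ is, under $e$, the trace on $e(\mathcal{G})$ of an \emph{externally} definable subset of $\mathcal{N}$ — essentially because the truth value of $\phi$ on a tuple of ``standard'' elements is already decided modulo some $N_i^*$, hence is witnessed by evaluating $\phi$ in a sufficiently saturated elementary extension of $\mathcal{N}$. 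By Shelah's theorem the expansion $\mathcal{N}^{\mathrm{Sh}}$ of $\mathcal{N}$ by all externally definable sets is again NIP; in $\mathcal{N}^{\mathrm{Sh}}$ these traces are definable, so the structure induced on $e(\mathcal{G})$ inside $\mathcal{N}^{\mathrm{Sh}}$ is interpretable in an NIP structure and hence NIP, and $\mathcal{G}$ is a reduct of it. Therefore $\mathcal{G}$ has NIP.

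The main obstacle is the second implication, and inside it the delicate point is to make rigorous that the \emph{entire} $\mathcal{L}_\text{prof}$-structure of $\mathcal{G}$ — including the $I$-sort, the relation $K$, and formulas with quantifiers over either sort, not merely the group reduct — is captured by externally definable sets of the ultraproduct; this is exactly the content supplied by \cite[Lemma 3.2]{pseudofinite} together with the cited remark of \cite{tent}. Once that is available, the remainder is the bookkeeping of the ultrapower/quotient identification already carried out in Lemma~\ref{elem_ext} and the lemmas following it.
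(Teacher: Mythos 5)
Your forward direction is correct and essentially a direct proof of what the paper outsources to \cite[Lemma 3.2]{pseudofinite}: you identify $\prod_{t}\mathcal{G}_t/\mathcal{U}$ with a structure interpretable in the ultrapower $\mathcal{G}^*$ over the parameter $\alpha$, and since $\mathcal{G}^*\equiv\mathcal{G}$, NIP passes through. That is fine.

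The reverse direction, however, has a genuine gap, and it sits exactly where you yourself flag the ``delicate point.'' You work with the substructure $e(\mathcal{G})\subseteq\mathcal{N}$ and assert that every $\mathcal{L}_\text{prof}$-formula's truth set on $\mathcal{G}$ is the trace on $e(\mathcal{G})$ of an externally definable subset of $\mathcal{N}$, so that $\mathcal{G}$ is a reduct of the induced structure on $e(\mathcal{G})$ inside $\mathcal{N}^{\mathrm{Sh}}$. But $e$ is only an embedding, not an elementary one, and the sentence ``the truth value of $\phi$ on standard elements is already decided modulo some $N_i^*$'' is not an argument: for formulas with quantifiers (especially over the $I$-sort), it is precisely the claim that needs a proof, and neither \cite[Lemma 3.2]{pseudofinite} nor \cite[Remark 5.5]{tent} supplies it in this generality. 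You have essentially restated the proposition, not proved it.

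The paper avoids this by identifying one concrete externally definable set and then \emph{interpreting} $\mathcal{G}$, rather than trying to control all traces on a substructure. Writing $(H,J)=\prod_t\mathcal{G}_t/\mathcal{U}$, set $J_0=\{j\in J: N_j \text{ has finite index in } H\}$. Since the family $(N_j)_{j\in J_0}$ is downward directed, there is $t^*$ in an elementary extension with $J_0=\{j\in J:N_j\supseteq N_{t^*}\}$, so $J_0$ is externally definable and $(H,J,J_0)$ is still NIP by Shelah's expansion theorem. One then checks, using {\L}o\'s's theorem and the preceding lemma, that $J_0$ is exactly the diagonal copy of $I$, that $\bigcap_{j\in J_0}N_j$ is now a \emph{definable} normal subgroup of $H$, and that $H/\bigcap_{j\in J_0}N_j\cong G$ with the $K$-relation to $J_0$ matching. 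Thus $\mathcal{G}$ is interpretable (as a quotient on the group sort, with $I$-sort $J_0$) in the NIP structure $(H,J,J_0)$, and interpretability preserves NIP. In short: you need to pin down $J_0$ as the externally definable set and pass to an interpretation, rather than appealing to a blanket ``all truth sets are traces'' claim.
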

\begin{proof}
	If $\mathcal{G}$ has NIP  then 
	the structures $(\mathcal{G}_t:t \in I)$ are uniformly interpretable in $\mathcal{G}$
	and hence $\prod_{t \in I}\mathcal{G}_t / \mathcal{U}$ has NIP by \cite[Lemma 3.2]{pseudofinite}.
	
	Now assume $(H,J) = \prod_{t \in I}\mathcal{G}_t / \mathcal{U}$ has NIP and put
	\[ J_0 = \{ j \in J : N_j \text{ has finite index in } H \}. \]
	Given $j,j' \in J_0$ there is $t \in J_0$ such that $N_j$ and $N_{j'}$ both contain $N_t$.
	Hence there is $t^*$ in some elementary extension such that
	\[ J_0 = \{ j \in J : N_j \supset N_{t^*} \}. \]
	Hence $J_0$ is externally definable in the sense of \cite[Definition 3.8]{simon}. By \cite[Corollary 2.24]{simon} the 		expansion of an NIP structure by externally definable sets still has NIP.
	
	If $j = [(j_k)]$ is an element of $J_0$ then there is $i \in I$ such that $j_k = i$ for almost all $k$.
	By the previous lemma the structure $\mathcal{G}$ is interpretable in $(H,J,J_0)$ and hence has NIP.
\end{proof}

\begin{lemma}
	Let $H$ be a group and let $(N_i : i \in I)$ be a family of normal subgroups of finite index such that
	$\forall i,j \exists k : N_k \leq N_i \cap N_j$.
	We view $H$ as an $\mathcal{L}_\text{prof}$-structure $\mathcal{H} = (H,I)$. Let $f_j : \varprojlim H/N_i \rightarrow H/N_j$ be the projection maps. Then $\{ \ker f_j : j \in I \}$ is a neighborhood basis at the identity. Therefore we may view $\varprojlim H/N_i$ as an $\mathcal{L}_\text{prof}$-structure $(\varprojlim H/N_i, I)$.
	
	Let $\mathcal{H}^* = (H^*,I^*)$ be an $|I|^+$-saturated elementary extension of $\mathcal{H}$. Then 
	\[ \mathcal{H}^* / \bigcap_{i \in I} N_i^* \cong \varprojlim_{i \in I} H/N_i \]
	and $(N_j^* / \bigcap_{i \in I} N_i^* : j \in I )$ is a neighborhood basis for the identity consisting of open normal subgroups.
\end{lemma}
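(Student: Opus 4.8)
The plan is to adapt the proof of Lemma~\ref{elem_ext}, the one essential new point being that here $H$ need not be profinite, so one cannot invoke compactness to see that the natural map of $H^*$ into the relevant inverse limit is onto; instead we exploit the $|I|^+$-saturation of $\mathcal{H}^*$. Throughout, $N_i^*$ denotes $K(H^*,i)$, and I first collect the elementarity facts and then run a type-realization argument.

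First I would record that for each $i \in I$ the set $N_i^*$ is a normal subgroup of $H^*$ with $N_i^* \cap H = N_i$ and $|H^*:N_i^*| = |H:N_i|$, a fixed finite number; consequently the natural map $H \to H^*/N_i^*$ is onto with kernel $N_i$ and induces an isomorphism $\iota_i : H^*/N_i^* \xrightarrow{\sim} H/N_i$. Moreover, whenever $i \leq j$ the sentence $\forall x\,(K(x,j) \to K(x,i))$ passes to $\mathcal{H}^*$, so $N_j^* \leq N_i^*$, the $\iota_i$ are compatible with the transition maps of the two inverse systems $(H^*/N_i^*)_{i\in I}$ and $(H/N_i)_{i\in I}$, and the directedness hypothesis on $(N_i)_{i\in I}$ transfers verbatim to $(N_i^*)_{i\in I}$. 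Letting $g_i : H^* \to H/N_i$ be the composite of the quotient $q_i : H^* \twoheadrightarrow H^*/N_i^*$ with $\iota_i$, these maps are compatible and assemble into a homomorphism $g : H^* \to \varprojlim_{i \in I} H/N_i$ with $\ker g = \bigcap_{i \in I} \ker g_i = \bigcap_{i \in I} N_i^*$.

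The heart of the argument is that $g$ is surjective. Given a coherent tuple $(h_i N_i)_{i \in I} \in \varprojlim_{i \in I} H/N_i$, put $c_i = \iota_i^{-1}(h_i N_i) \in H^*/N_i^*$, choose representatives $x_i \in H^*$, and consider the partial type $p(x) = \{\, K(x_i^{-1} x,\, i) : i \in I \,\}$ over the at most $|I|$ parameters $\{x_i : i \in I\}\cup\{i : i\in I\}$. A realization of $p$ is precisely an $h^* \in H^*$ with $g(h^*) = (h_i N_i)_{i\in I}$, so it suffices to check that $p$ is finitely satisfiable; then $|I|^+$-saturation of $\mathcal{H}^*$ produces a realization. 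Given $i_1, \dots, i_n \in I$, the directedness of $(N_i)_{i\in I}$ yields $k \in I$ with $N_k \leq N_{i_1}\cap\dots\cap N_{i_n}$, hence $N_k^* \leq N_{i_\ell}^*$ for each $\ell$; since $(c_i)_{i\in I}$ is coherent in $\varprojlim_i H^*/N_i^*$, we get $x_{i_\ell} N_{i_\ell}^* = x_k N_{i_\ell}^*$, i.e. $K(x_{i_\ell}^{-1} x_k, i_\ell)$ holds, so $x = x_k$ satisfies the finite subtype. Thus $g$ is onto and induces an isomorphism $\bar g : H^*/\bigcap_{i\in I} N_i^* \xrightarrow{\sim} \varprojlim_{i \in I} H/N_i$, which is the first assertion.

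Finally, I would identify the distinguished subgroups under $\bar g$. If $h^* \in N_j^*$ then $g_j(h^*) = \iota_j(N_j^*) = N_j$, the identity of $H/N_j$, so $\bar g$ maps $N_j^*/\bigcap_i N_i^*$ into $\ker f_j$; conversely, if the $j$-th coordinate of $\bar g(h^*\bigcap_i N_i^*)$ is trivial then $q_j(h^*) = \iota_j^{-1}(N_j) = N_j^*$, so $h^* \in N_j^*$. Hence $\bar g(N_j^*/\bigcap_i N_i^*) = \ker f_j$ exactly. Since $(\ker f_j : j \in I)$ is, by the setup, a neighbourhood basis of the identity of $\varprojlim_{i\in I} H/N_i$ consisting of open normal subgroups, transporting this along the isomorphism $\bar g$ gives the same for $(N_j^*/\bigcap_i N_i^* : j \in I)$, proving the second assertion. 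I expect the only real obstacle to be the surjectivity step: one must take care that the common refinement $N_k$ is found inside the \emph{original} index set $I$ (this is exactly where the directedness hypothesis enters) and that the partial type $p$ involves at most $|I|$ parameters, so that $|I|^+$-saturation applies.
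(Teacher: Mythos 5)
Your proof is correct and follows essentially the same route as the paper: identify $H^*/N_i^*$ with $H/N_i$ via elementarity of finite indices, assemble the natural map into $\varprojlim H/N_i$, show its kernel is $\bigcap_i N_i^*$, and prove surjectivity by realizing the partial type $\{x \in x_iN_i^* : i \in I\}$ using $|I|^+$-saturation together with directedness of $(N_i)_{i\in I}$ for finite satisfiability. The final identification $\bar g(N_j^*/\bigcap_i N_i^*) = \ker f_j$ is also exactly how the paper concludes.
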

\begin{proof}
	By elementarity, we have $|H^*:N_i^*| = |H:N_i|$ for all $i \in I$. Using this and elementarity, it is easy to see that
	\[ \varprojlim_{i \in I} H^*/N_i^* = \varprojlim_{i \in I} H/N_i. \]
	Now write
	\[ \varprojlim_{i \in I} H^*/N_i^* = \{ (g_iN_i^*)_i : \forall i \geq j: g_iN_j^* = g_jN_j^* \} \]
	and let $f : H^* \rightarrow  \varprojlim_{i \in I} H^*/N_i^*, g \mapsto (gN_i^*)_i$ be the natural homomorphism.
	Clearly $\ker f = \bigcap_{i \in I} N_i^*$. It remains to show that $f$ is surjective.
	
	Fix $(g_iN_i)_i \in \varprojlim_{i \in I} H^*/N_i^*$ and consider the partial type
	\[ \Sigma(x) = \{ x \in g_iN_i^* : i \in I \}. \]
	Given $I_o \subseteq I$ finite, there is $j \in I$ such that
	$N_j \leq \bigcap_{i \in I_0} N_i$. Then $g_iN_j = g_{i'}N_j$ for all $i,i' \in I_0$.
	Hence $\Sigma(x)$ is finitely satisfiable and as $\mathcal{H}^*$ is $|I|^+$-saturated, there exists $g \in H^*$ such that $g \in g_iN_i^*$ for all $i \in I$ and hence $f(g) = (g_iN_i^*)$.
	
	The above isomorphism maps $N_j^*/\bigcap_{i \in I} N_i^*$ to $\ker f_j$ and hence the family $(N_j^*/\bigcap_{i \in I} N_i^* : j \in I)$ is a neighborhood basis for the identity consisting of open normal subgroups.
\end{proof}

\begin{lemma}
	Let $\mathcal{H} = (H,I)$ be as in the previous lemma.
	If $\mathcal{H}$ has NIP then the profinite group $(\varprojlim_{i \in I} H/N_i, I)$ has NIP.
\end{lemma}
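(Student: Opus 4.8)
The plan is to deduce this from Proposition~\ref{prof_pseudo_NIP} by passing through the finite quotients. Set $G=\varprojlim_{i\in I}H/N_i$. By the previous lemma $G$ is a profinite group, the family $(\ker f_j:j\in I)$ is a neighbourhood basis of the identity consisting of open normal subgroups, and $G/\ker f_t\cong H/N_t$ canonically for each $t\in I$. If $G$ is finite the statement is trivial, so assume $G$ is infinite; fix an ultrafilter $\mathcal{U}$ on $I$ with $\mathcal{U}\supseteq\mathcal{F}_{\geq}$ (available since $I$ is directed and $G$ infinite).

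The first step is to observe that, for each $t\in I$, the $\mathcal{L}_\text{prof}$-structure $\mathcal{H}_t:=(H/N_t,\,I_{\leq t})$ is interpretable in $\mathcal{H}$, uniformly in the parameter $t$: the finite-index normal subgroup $N_t=K(H,t)$ and the congruence it induces on the group sort are definable with parameter $t$, the induced group operation and the relations $K(\,\cdot\,,j)$ for $j\leq t$ descend because $N_j\supseteq N_t$, and $I_{\leq t}$ with its order is definable with parameter $t$. Since $\mathcal{H}$ has NIP, \cite[Lemma 3.2]{pseudofinite} then gives that $\prod_{t\in I}\mathcal{H}_t/\mathcal{U}$ has NIP.

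The second step is to identify $\mathcal{H}_t$ with the structure $\mathcal{G}_t=(G/\ker f_t,\,I_{\leq t})$ attached to $G$ and the basis $(\ker f_j)$ in the sense of Proposition~\ref{prof_pseudo_NIP}. Under the canonical isomorphism $G/\ker f_t\cong H/N_t$ the coded subgroup $\ker f_j/\ker f_t$ (for $j\leq t$) must go to $N_j/N_t$; concretely one checks $f_t(\ker f_j)=N_j/N_t$, which follows from coherence of the inverse limit together with surjectivity of $G\to H/N_j$. Hence $\prod_{t\in I}\mathcal{G}_t/\mathcal{U}\cong\prod_{t\in I}\mathcal{H}_t/\mathcal{U}$ has NIP, and the implication in Proposition~\ref{prof_pseudo_NIP} from NIP of $\prod_t\mathcal{G}_t/\mathcal{U}$ to NIP of $(G,I)$ finishes the proof.

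There is no serious obstacle here; the only point requiring genuine verification is the identification $\mathcal{H}_t\cong\mathcal{G}_t$, i.e.\ that the $\mathcal{L}_\text{prof}$-structure carried by $H/N_t$ is the same whether read off from $\mathcal{H}$ or from the profinite group $G$. A shorter but less self-contained alternative would be to use the previous lemma to present $(\varprojlim_{i\in I}H/N_i,I)$ as the quotient $(\mathcal{H}^*/\bigcap_i N_i^*,\,I)$ of the NIP structure $\mathcal{H}^*$ by the normal subgroup $\bigcap_i N_i^*$, which is type-definable over $I$, and then to appeal to preservation of NIP under such hyperimaginary quotients; I would prefer the ultraproduct argument above since it uses only the machinery already developed in this paper.
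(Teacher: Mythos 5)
Your argument is correct, and it takes a genuinely different route from the paper's. The paper works with a sufficiently saturated elementary extension $\mathcal{H}^*=(H^*,I^*)$: it repeats the argument of Proposition~\ref{prof_pseudo_NIP} to show that $I$ is externally definable in $I^*$, concludes that $(H^*,I^*,I)$ has NIP, and then invokes the previous lemma to present $(\varprojlim_{i\in I}H/N_i,\,I)$ as $(H^*/\bigcap_{i\in I}N_i^*,\,I)$, which it declares interpretable in $(H^*,I^*,I)$. You instead keep the finite quotients $\mathcal{H}_t=(H/N_t,I_{\le t})$ explicit: their uniform interpretability in $\mathcal{H}$ gives NIP of the ultraproduct $\prod_t\mathcal{H}_t/\mathcal{U}$ by \cite[Lemma~3.2]{pseudofinite}, the canonical isomorphisms $G/\ker f_t\cong H/N_t$ identify this with $\prod_t\mathcal{G}_t/\mathcal{U}$ for $G=\varprojlim_i H/N_i$, and the backwards implication of Proposition~\ref{prof_pseudo_NIP} then yields NIP of $(G,I)$. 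Two things distinguish your route: you reuse the external-definability mechanism already packaged inside Proposition~\ref{prof_pseudo_NIP} rather than repeating it, and --- as you rightly flag at the end --- you avoid quotienting by $\bigcap_{i\in I}N_i^*$, which is only $\bigwedge$-definable over $I$, so the paper's interpretability claim is really a hyperimaginary quotient and needs an extra word (for instance, one can choose the external parameter witnessing $I$ so that its trace on $H^*$ is exactly $\bigcap_{i\in I}N_i^*$, making the quotient genuinely interpretable in a Shelah expansion). The one step in your argument that required genuine checking is the identification $\mathcal{H}_t\cong\mathcal{G}_t$, and your verification that $f_t(\ker f_j)=N_j/N_t$ for $j\le t$, via coherence of the inverse system, is correct.
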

\begin{proof}
	As $\mathcal{H}$ has NIP, there are only finitely many normal subgroups $N_i$ of index at most $n$ for all $n$.
	
	Fix an $\aleph_1$-saturated elementary extension $\mathcal{H}^* =(H^*,I^*)$ of $\mathcal{H}$. Note that $|H^*:N_i^*|$ must be infinite for all $i \in I^* \setminus I$.
	By a similar argument as in \ref{prof_pseudo_NIP} the set $I \subseteq I^*$ is externally definable.
	
	In particular, the structure $(H^*,I^*,I)$, where $I$ is a unary predicate, has NIP.
	
	By the previous lemma, the structure $(\varprojlim_{i \in I} H/N_i, I)$ is interpretable in the NIP structure $(H^*,I^*,I)$ and thus has NIP.
\end{proof}

The previous lemma allows us to study families of uniformly definable subgroups of finite index in NIP groups by considering certain profinite NIP groups in the language $\mathcal{L}_\text{prof}$.
As an application we obtain the following generalization of \cite[Proposition 5.1]{tent}:

\begin{theorem}
	Let $G$ be an NIP group and let $\phi(x,y)$ be a formula. Let $(N_i:i \in I)$ be the family of all normal subgroups of finite index which are definable by an instance of $\phi$. If this family is infinite then there is a finite subset $I_0 \subseteq I$ such that $\bigcap_{j \in I_0}N_j / (\bigcap_{j \in J_0}N_j \cap N_i)$ is solvable for all $i \in I$.
\end{theorem}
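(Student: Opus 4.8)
The plan is to reduce the statement, by means of the preceding lemmas, to the assertion that a naturally associated profinite NIP group is virtually prosolvable, which is the not-necessarily-full analogue of \cite[Proposition 5.1]{tent}.

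First, since $G$ has NIP, Baldwin--Saxl (Lemma~\ref{baldwin_saxl}) applied to $\phi$ with $k=\VC(\phi^{\mathrm{opp}})$ gives that every finite intersection $\bigcap_{j\in J}N_j$ already equals $\bigcap_{j\in J'}N_j$ for some $J'\subseteq J$ with $|J'|\le k$. Setting $\psi(x,y_1,\dots,y_k):=\bigwedge_{t=1}^{k}\phi(x,y_t)$ and letting $\widetilde I$ be the set of all subgroups $N_J:=\bigcap_{j\in J}N_j$ with $J\subseteq I$ finite, the family $\widetilde I$ consists of normal subgroups of finite index, is closed under finite intersections, is cofinal among its own members under reverse inclusion, and each member is of the form $\psi(G,\overline b)$; moreover $\psi$ has NIP. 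Coding the members of $\widetilde I$ by such parameter tuples modulo the definable relation of defining the same subgroup, and ordering them by reverse inclusion, the two-sorted $\mathcal{L}_\text{prof}$-structure $\mathcal H=(G,\widetilde I)$ is interpretable in $G$ and hence has NIP.

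By the lemma immediately preceding this theorem, the profinite group $\widehat H=\varprojlim_{J\in\widetilde I}G/N_J$, with the $\mathcal{L}_\text{prof}$-structure given by the open normal subgroups $\overline N_J=\ker(\widehat H\to G/N_J)$, again has NIP. For $i\in I$ the singleton $N_i$ lies in $\widetilde I$, so $\overline N_i$ is one of these subgroups, $\widehat H/\overline N_i\cong G/N_i$ canonically and compatibly with $G\to\widehat H$, and $(\overline N_i:i\in I)$ is infinite. I claim it is enough to find an open normal prosolvable subgroup $M\trianglelefteq_o\widehat H$: such an $M$ contains some $\overline N_{I_0}$ with $I_0\subseteq I$ finite, and a closed subgroup of a prosolvable group is prosolvable, so $\overline N_{I_0}$ is itself open, normal and prosolvable. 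Writing $M_0=\bigcap_{j\in I_0}N_j\le G$, the image of $\overline N_{I_0}$ under $\widehat H\twoheadrightarrow\widehat H/\overline N_i\cong G/N_i$ is exactly $M_0N_i/N_i$, and this is a finite continuous quotient of the prosolvable group $\overline N_{I_0}$, hence solvable; that is, $\bigcap_{j\in I_0}N_j\big/\bigl(\bigcap_{j\in I_0}N_j\cap N_i\bigr)\cong M_0N_i/N_i$ is solvable for all $i\in I$. (Conversely, such an $I_0$ makes $\bigcap_{j\in I_0}\overline N_j$ an open normal subgroup of $\widehat H$ whose finite continuous quotients embed into finite products of the solvable groups $M_0N_i/N_i$, hence is prosolvable; so the statement is in fact equivalent to $\widehat H$ being virtually prosolvable.)

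It thus remains to show that the profinite NIP group $\widehat H$ is virtually prosolvable, which I would prove as in \cite[Proposition 5.1]{tent}: by the equivalence just noted it suffices to bound, uniformly in $\overline N\in\widetilde I$, the nonabelian composition factors of the finite quotients $\widehat H/\overline N$. If these were unbounded one derives a contradiction with the NIP of $G$ by working inside the finite quotients $\widehat H/\overline N$, which are uniformly interpretable in $(\widehat H,\widetilde I)$: using their solvable radical, layer and quasisimple components together with purely group-theoretic formulas such as ``$x$ does not commute with any conjugate of $y$'' (which expresses uniformly that $x$ does not centralise the normal closure of $y$), one realises independence configurations whose size grows without bound in quotients with arbitrarily many components. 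This is the main obstacle. Because $(\widehat H,\widetilde I)$ is \emph{not} full, the counting argument from the proof of Theorem~\ref{main_theorem} -- which pulls back index-$p$ subgroups of Sylow subgroups of the finite quotients to open subgroups of $\widehat H$ and then invokes polynomial subgroup growth, Theorem~\ref{t:NIPpolybd} -- is not directly available; the forbidden patterns have to be located inside the interpreted finite quotients themselves, which requires controlling the permutation action of $\widehat H/\overline N$ on its components and so relies on the classification of finite simple groups. Granting this, $\widehat H$ is virtually prosolvable and the theorem follows.
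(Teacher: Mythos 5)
Your overall route matches the paper's: replace $\phi$ by a bounded conjunction using Baldwin--Saxl so that the family is closed under finite intersections, view $G$ together with this family as an $\mathcal{L}_\text{prof}$-structure with NIP, pass to the associated profinite completion $\varprojlim_i G/N_i$ via the lemma immediately preceding the theorem, invoke \cite[Proposition 5.1]{tent} to get virtual prosolvability, and translate back. Your second paragraph, explaining how an open normal prosolvable subgroup of $\widehat H$ yields the desired finite set $I_0$, is a correct spelling out of a step the paper leaves implicit.

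There is, however, a genuine gap in the first paragraph. You assert that the two-sorted structure $\mathcal{H}=(G,\widetilde I)$ is interpretable in $G$. This is not the case: the sort $\widetilde I$ must be a definable (quotient of a) set of parameter tuples, but the condition that $\phi(G,b)$ (or $\psi(G,\overline b)$) has \emph{finite index} in $G$ is not a first-order property of $b$. The set of parameters $J=\{b : \phi(G,b) \text{ is a normal subgroup of finite index}\}$ is in general only \emph{externally} definable, and this is precisely what the paper's proof establishes and uses: because the family is closed under finite intersections, the partial type $\{\phi(x,y)\text{ is a normal subgroup}\}\cup\{\phi(G^*,y)\subseteq N_i^*:i\in I\}$ is consistent in a saturated extension $G^*$, and a realization $b^*$ lets one cut out $J$ by the condition $\phi(G^*,b^*)\subseteq\phi(G^*,b)$. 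One then invokes \cite[Corollary 2.24]{simon} (Shelah's theorem that expansions by externally definable sets preserve NIP) to conclude that $(G,J)$, and hence $(G,\widetilde I)$, has NIP. Without this external-definability step your argument does not get off the ground, since without it there is no reason the two-sorted structure should even be NIP.

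Your third paragraph is also off the mark, though in the opposite direction: it attempts to re-derive \cite[Proposition 5.1]{tent} and raises doubts about whether the argument applies because $(\widehat H,\widetilde I)$ is not full. The paper cites \cite[Proposition 5.1]{tent} as a black box for exactly this situation --- a profinite group with a distinguished (not necessarily full) neighbourhood basis that is NIP as an $\mathcal{L}_\text{prof}$-structure --- so no separate proof is required, and the worry about fullness, polynomial subgroup growth, and CFSG is a distraction from the actual content of the step.
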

\begin{proof}
	By the Baldwin-Saxl lemma we may assume that $(N_i : i \in I)$ is closed under finite intersections.
	Now consider the corresponding $\mathcal{L}_\text{prof}$-structure $(G,I)$. 
	
	We aim to show that $(G,I)$ has NIP. The set $\{ b : \phi(G,b) \text{ is a normal subgroup of } G \}$ is definable. Since we assume the family $(N_i:i \in I)$ to be closed under finite intersections, the set
	\[ J = \{ b : \phi(G,b) \text{ is a normal subgroup of finite index in } G \} \]
	is externally definable. Therefore $(G,I)$ has NIP since it is interpretable after naming $J$.
	
	By the previous lemma the profinite group $(\varprojlim_{i \in I} G/N_i, I)$ has NIP. Therefore it is virtually prosolvable by \cite[Proposition 5.1]{tent}.
\end{proof}

\bibliography{MMMsurvey}
\bibliographystyle{plain}

\end{document}